\newtheorem{theorem}{Theorem}[section]
\newtheorem{lemma}[theorem]{Lemma}
\newtheorem{definitiona}[theorem]{\textbf{Definition}}
\theoremstyle{remark}
\newtheorem{remark}[theorem]{Remark}
\newtheorem{corollarya}[theorem]{\textup{\textbf{Corollary}}}
\newtheorem{prop}{\textup{\textbf{Proposition}}}
\newtheorem*{terminology}{\textup{\textbf{Terminology } }\hspace{1mm}}
\newtheorem*{question}{\textup{\textbf{Question}}}
\theoremstyle{remark}
\numberwithin{equation}{section} \theoremstyle{plain}
\newcommand{\thmref}[1]{Theorem~\ref{#1}}
\newcommand{\lemref}[1]{Lemma~\ref{#1}}
\newcommand{\propref}[1]{Proposition~\ref{#1}}
\newcommand{\corollaryaref}[1]{Corollary~\ref{#1}}
\numberwithin{equation}{section}
\begin{document}
	
	\title{A Limit Set Intersection Theorem for Graphs of Relatively Hyperbolic Groups}
	
	\author[Swathi Krishna]{Swathi Krishna} 
	\address{Indian Institute of Science Education and Research (IISER) Mohali, Knowledge City, 
		Sector 81,  S.A.S. Nagar, Punjab 140306, India}
	\email{swathi280491@gmail.com}
	
	\maketitle

	\begin{abstract} 
		Let $G$ be a relatively hyperbolic group that admits a decomposition into a finite graph of relatively hyperbolic groups structure with quasi-isometrically (qi) embedded condition. We prove that the set of conjugates of all the vertex and edge groups satisfy the limit set intersection property for conical limit points (refer to Definition \ref{def:1} and Definition \ref{def:2} for the definitions of conical limit points and limit set intersection property respectively). This result is motivated by the work of Sardar for graph of hyperbolic groups \cite{PS}.
	\end{abstract}

	\section{Introduction}\label{sec1} 
	
	Limit set intersection  theorem first appeared in the work of Susskind \cite{Sus}, in the context of geometrically finite subgroups of Kleinian groups. Later, Susskind and Swarup \cite{SusSwa} proved it for geometrically finite purely hyperbolic subgroups of a discrete subgroup of Isom($\mathbb{H}^n$). Here, a hyperbolic group is a discrete subgroup of Isom$(\mathbb{H}^n)$. A group $G$ is geometrically finite if there is a finite sided fundamental polyhedron for the action of $G$ on $\mathbb{H}^3$. The works of Susskind and Swarup were followed by the work of J.W. Anderson in \cite{JWA1}, \cite{JWA2} and \cite{JWA3} for some classes of subgroups of Kleinian groups. Susskind asked the following question: 
	
	\begin{question}
		Let $\Gamma$ be a non-elementary Kleinian group acting on $\mathbb{H}^n$ for some $n \geq 2$, and let $H, K$ be non-elementary subgroups of $\Gamma$, then is $\Lambda_{c}(H) \cap \Lambda_{c}(K) \subset \Lambda(H \cap K)$ true? Here $\Lambda_{c}(H)$ and $\Lambda_{c}(K)$ denote the conical limit sets of $H$ and $K$ (see Definition \ref{def:1}) respectively.
	\end{question} 
	
	In an attempt to answer this, Anderson showed that if $\Gamma$ is a non-elementary purely loxodromic Kleinian group acting on $\mathbb{H}^n$ for some $n\geq 2$ (cf.\cite{JWA4}) and $H$ and $K$ are non-elementary subgroups of $\Gamma$, then $\Lambda_{c}(H) \cap \Lambda_{c}^{u}(K) \subset \Lambda_{c}(H \cap K)$, where $\Lambda_{c}^{u}(K)$ denotes the uniform conical limit sets of $K$. But in \cite{TDDS}, Das and Simmons constructed a non-elementary Fuchsian group $\Gamma$ that admits two non-elementary
	subgroups $H, K \leq \Gamma$ such that $H \cap K = \{e\}$ but $\Lambda_{c}(H) \cap \Lambda_{c}(K) \neq \emptyset$, thus providing a negative answer to Susskind's question.
	
	However, this prompts the following question in the context of hyperbolic and relatively hyperbolic groups:
	
	\begin{question}
		Suppose $\Gamma$ is a hyperbolic (resp. relatively hyperbolic) group and $H, K$ are subgroups of $\Gamma$, then is $\Lambda_{c}(H) \cap \Lambda_{c}(K) \subset \Lambda(H \cap K)$ true?
	\end{question}
	
	In 2012, Yang \cite{Yang} proved a limit set intersection theorem for relatively quasiconvex subgroups of relatively hyperbolic groups. Limit set intersection theorem is not true for general subgroups of hyperbolic groups, and it was known to hold only for quasiconvex subgroups until the recent work of Sardar \cite{PS}. In the paper, he claimed that a limit set intersection theorem holds for limit sets of vertex and edge subgroups of a graph of hyperbolic groups, however, in communication with Sardar it has been pointed out that this only holds for conical limit sets, see \cite{PS1}. We generalize this to relatively hyperbolic graph of groups in the following theorem:
	\begin{theorem}\label{thm1}
		Let $G$ be a group admitting a decomposition into a finite graph of relatively hyperbolic groups $(\mathcal{G},Y)$  satisfying the qi-embedded condition. Further, suppose the monomorphisms from edge groups to vertex groups is strictly type-preserving, and that induced tree of coned-off spaces also satisfy the qi-embedded condition. If $G$ is hyperbolic relative to the family $\mathcal{C}$ of maximal parabolic subgroups, then the set of conjugates of vertex and edge groups of $G$ satisfy a limit set intersection property for conical limit points.
	\end{theorem}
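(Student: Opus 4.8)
The plan is to combine the combination-theorem machinery for graphs of relatively hyperbolic groups with a tree-combinatorial argument over the Bass--Serre tree, reducing the statement to repeated applications of Yang's limit set intersection theorem \cite{Yang} inside single vertex groups. Let $T$ be the Bass--Serre tree of $(\mathcal G,Y)$, so that $G$ acts on $T$ with vertex and edge stabilizers the conjugates of the vertex and edge groups, and every member of the collection in the statement is the $G$-stabilizer of a unique simplex of $T$. From the qi-embedded hypotheses together with the strictly type-preserving condition and the qi-embeddedness of the induced tree of coned-off spaces, one extracts the following (this is the technical core of the combination-theorem set-up): the cusped space $\mathcal X$ of $(G,\mathcal C)$ carries a tree-of-spaces decomposition over $T$ whose vertex and edge spaces are uniformly qi-embedded copies of the cusped spaces of the vertex and edge groups; each vertex and edge group is relatively quasiconvex in $G$; and there are Cannon--Thurston maps for all vertex and edge spaces, so that for a vertex group $V=\mathrm{Stab}(v)$ the limit set embedding identifies $\partial V$ equivariantly and homeomorphically with $\Lambda(V)\subseteq\partial(G,\mathcal C)$. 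Since $\Lambda_c(H\cap K)\subseteq\Lambda_c(H)\cap\Lambda_c(K)$ holds trivially and conical limit sets are $G$-equivariant, it suffices to prove, for all simplices $\sigma,\tau$ of $T$, that
\[
\Lambda_c(\mathrm{Stab}\,\sigma)\cap\Lambda_c(\mathrm{Stab}\,\tau)\ \subseteq\ \Lambda_c\bigl(\mathrm{Stab}\,\sigma\cap\mathrm{Stab}\,\tau\bigr),
\]
where $\mathrm{Stab}\,\sigma\cap\mathrm{Stab}\,\tau$ is the pointwise $G$-stabilizer of the geodesic segment $[\sigma,\tau]\subseteq T$.

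The next ingredient is a crossing lemma. Fix $\xi$ in the left-hand side and a geodesic ray $\alpha$ in $\mathcal X$ with $\alpha(\infty)=\xi$. By conicality, $\alpha$ returns to a bounded neighbourhood of the space over $\sigma$, and of the space over $\tau$, along two sequences of times tending to infinity. Projecting to $T$ via $\pi\colon\mathcal X\to T$ and using that $T$ is a tree --- so that the nearest-point projection of $\pi\circ\alpha$ onto $[\sigma,\tau]$ is coarsely Lipschitz and runs cofinally from near $\sigma$ to near $\tau$ --- each excursion of $\alpha$ between neighbourhoods of $\sigma$ and $\tau$ forces $\alpha$ to pass within bounded distance of the space over any prescribed simplex on $[\sigma,\tau]$. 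Hence $\xi\in\Lambda_c(\mathrm{Stab}\,\rho)$ for \emph{every} simplex $\rho$ on $[\sigma,\tau]$; in particular $\xi$ is conical for the stabilizer of every interior vertex of $[\sigma,\tau]$.

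Now induct on the length $n$ of $[\sigma,\tau]$. For $n\le 1$ the group $\mathrm{Stab}\,\sigma\cap\mathrm{Stab}\,\tau$ equals one of $\mathrm{Stab}\,\sigma,\mathrm{Stab}\,\tau$ and there is nothing to prove. For $n\ge 2$, choose an interior vertex $v$ of $[\sigma,\tau]$ and put $W=\mathrm{Stab}(v)$. By the crossing lemma $\xi\in\Lambda_c(W)$, so the inductive hypothesis applied to the two strictly shorter segments $[\sigma,v]$ and $[v,\tau]$ gives $\xi\in\Lambda_c(A)$ and $\xi\in\Lambda_c(B)$ with $A=\mathrm{Stab}\,\sigma\cap W$ and $B=W\cap\mathrm{Stab}\,\tau$. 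Both $A$ and $B$ are relatively quasiconvex subgroups of $W$ (finite intersections of relatively quasiconvex subgroups are relatively quasiconvex, and relative quasiconvexity is transitive for subgroups of a relatively quasiconvex subgroup), so via a transfer lemma --- the identification $\partial W\cong\Lambda(W)$ carries conical limit points of a relatively quasiconvex $S\le W$ in $\partial(G,\mathcal C)$ to conical limit points of $S$ in $\partial W$ and back --- the point $\xi$ pulls back to a point of $\partial W$ conical for both $A$ and $B$. Yang's theorem \cite{Yang}, applied inside the relatively hyperbolic group $W$, places that point in $\Lambda_c^{W}(A\cap B)$, and the transfer lemma pushes it back to give $\xi\in\Lambda_c(A\cap B)$. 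Since $v\in[\sigma,\tau]$ we have $A\cap B=\mathrm{Stab}\,\sigma\cap W\cap\mathrm{Stab}\,\tau=\mathrm{Stab}\,\sigma\cap\mathrm{Stab}\,\tau$, which closes the induction and proves Theorem \ref{thm1}.

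The step I expect to be the main obstacle is the transfer lemma: extracting from the combination-theorem hypotheses the exact compatibility of the Bowditch boundary of $G$ with those of the vertex groups, and in particular that conical limit points correspond under the Cannon--Thurston maps. This is precisely where the strictly type-preserving hypothesis and the qi-embeddedness of the induced tree of coned-off spaces enter: they force geodesics in a vertex cusped space to map to uniform quasigeodesics in $\mathcal X$ with controlled horoball penetration, so that recurrence of a ray near an orbit $So$ is preserved in both directions. A secondary technical point is to run the crossing-lemma bookkeeping in the cusped space $\mathcal X$, rather than merely in the coned-off Cayley graph, so that horoball excursions do not spoil the recurrence statements.
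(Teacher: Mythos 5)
Your reduction rests on two claims that do not follow from the hypotheses, and the first of them is false in exactly the situations the theorem is designed for. You assume that each vertex (and edge) group is relatively quasiconvex in $G$ and that the Cannon--Thurston map identifies $\partial X_v^h$ equivariantly and homeomorphically with $\Lambda(X_v)$. Under the qi-embedded condition the vertex groups are in general badly distorted in $G$ (think of the fibre group of a punctured-torus bundle, relatively hyperbolic with respect to the cusp group): the CT map of \thmref{thm} exists but is neither injective nor a homeomorphism onto the limit set; injectivity holds only at conical limit points, and that is precisely the content of \lemref{lem13}, which requires the ladder machinery to prove. If your ``transfer lemma'' were available, the whole theorem would be an immediate corollary of Yang's result and none of the Mj--Pal apparatus would be needed --- so this step is not a technical obstacle to be smoothed over, it is the heart of the problem. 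The same objection hits your induction: for a segment of length at least $2$ the groups $A=\mathrm{Stab}\,\sigma\cap W$ and $B=W\cap\mathrm{Stab}\,\tau$ are intersections involving stabilizers of distant simplices, and nothing in the qi-embedded, strictly type-preserving, or qi-preserving electrocution hypotheses makes these relatively quasiconvex in $W$; hence Yang's theorem, which is stated for relatively quasiconvex subgroups, cannot be invoked inside $W$.

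The crossing lemma has a second, independent gap. You want a coarsely Lipschitz projection $\pi\colon\mathcal X\to T$ of the cusped space onto the Bass--Serre tree, but no such map exists: the parabolic subgroups of $G$ correspond to trees of horosphere-like sets $C_\alpha$ that span unboundedly many vertices of $\mathcal T$, and inside the attached hyperbolic cone $C_\alpha^h$ two points lying over $\sigma$ and over $\tau$ can be at bounded $X^h$-distance while every intermediate vertex space $X_\rho\subset X$ is far away in $X^h$. So recurrence of a geodesic ray near the spaces over $\sigma$ and $\tau$ does not, by a tree-projection argument, force bounded-distance passes by $X_\rho$; you only get proximity to the horoball over $\rho$, and converting that into proximity to $X_\rho$ itself (legitimate only because the limit point in question is conical, hence not parabolic) is exactly what \lemref{lem4} and \lemref{lem3} are for, fed through the ladder $B_{\hat\alpha}$, its quasiconvexity (\thmref{thm4}), vertical quasigeodesic rays, and the flow statements \propref{prop1}, \lemref{lem12} and \corollaryaref{corollary}. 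In short, the paper replaces both of your missing ingredients by the flow/ladder analysis and finishes not with Yang but with a pigeonhole argument on the bounded-length words joining the two approximating orbit sequences, which directly manufactures elements of $G_{v_1}\cap gG_{v_2}g^{-1}$ converging to the given boundary point; your outline would need substitutes for both gaps before it could be completed.
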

	
	The proof relies heavily on the ladder construction by Mj and Pal in \cite{MjPal}. The terminology is briefly recalled in section \ref{sec4}
	
	\textbf{Outline of the paper:}
	
	\begin{enumerate}
		
		\item First we recall the construction of the tree of relatively hyperbolic metric spaces associated to a graph of relatively hyperbolic groups in \ref{sec4}
		
		\item We give a modified construction of the ladder from \cite{MjPal} in \ref{sec5}
		
		\item Using \propref{prop1}, \lemref{lem12} and \lemref{lem13}, we prove that if boundary points of two vertex spaces are mapped to the same point under the Cannon Thurston map (see Definition \ref{CT}), such that the image is a conical limit point for each of these vertex spaces, then these boundary points can be flowed (see Definition \ref{flow}) to each other.
		
		Finally, we prove \thmref{thm1} in \ref{finalsec}
		
	\end{enumerate}
	
	
	\section{Preliminaries}\label{sec2}
	
	\subsection{Hyperbolic spaces}
	
	For definitions and basic properties of hyperbolic metric spaces, hyperbolic groups and its boundary one may refer to \cite{MBAH} and \cite{IKNB}.
	For a quick review of limit points and results pertaining to it, one may refer to \cite{PS}. 
	
	\begin{definitiona}\label{def:1.1} \it \textbf{Limit set} (see \textup{\cite{PS}})\textbf{:}
		{\it Let $X$ be a hyperbolic metric space and let $Y$ be a subset of $X$. The limit set of $Y$, denoted by $\Lambda(Y)$, is defined to be $\Lambda(Y) = \{\xi \in \partial{X}\,|\, \exists \, \{y_n\} \subset Y \text{\textup{ st }} \lim{y_n} \to \xi\}$.} 
	\end{definitiona}
	
	Let $(X,d)$ be a metric space. For a subset $Y \subset X$ and $R >0$, $N_{R}(Y) = \{x \in X\, |\, \exists \, y \in Y \text{ with } d(x,y) \leq R \}$.  
	\begin{definitiona}\label{def:1}\it \textbf{Conical limit point}\textbf{:}
		{\it \begin{enumerate}
				\item Let $X$ be a proper hyperbolic metric space and $Y \subset X$. Then $\xi \in \partial{X}$ is called a {\em conical limit point} of $Y$ if for any geodesic ray $\gamma$ in $X$ asymptotic to $\xi$, there is a constant $R < \infty$ such that, there exists sequence $\{y_n\}$ in $Y \cap N_{R}(\gamma)$ with  $\lim{y_n} \to \xi$.
				
				\item For a group $H$ acting on $X$ by isometries, $\xi \in \Lambda(H)$ is a {\em conical limit point} of $H$ if $\xi$ is a conical limit point of the orbit $H\cdot{x_0}$ for any $x_0 \in X$. 
				
				\item The set of all conical limit points of $H$ is called the {\em conical limit set} and it is denoted by $\Lambda_{c}(H)$. 
	\end{enumerate}} \end{definitiona}
	
	The first two parts of Definition \ref{def:1} also make sense for an infinite subgroup or subset $H$ of a hyperbolic group $G$. In that case, we may take $X$ to be a Cayley graph of $G$ and the action of $H$ on $X$. We state two results on conical limit set of any such $H$, which easily follow from \cite[Lemma 1.2]{PS}.
	\begin{lemma}\label{lem1}
		Suppose $G$ is a hyperbolic group and let $H$ be a subset of $G$. Then for every $g \in G$,
		\begin{enumerate}
			\item $\Lambda_{c}(gHg^{-1}) = \Lambda_{c}(gH)$;
			\item $\Lambda_{c}(gH) = g\Lambda_{c}(H)$.
		\end{enumerate}
	\end{lemma}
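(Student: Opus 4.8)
The plan is to fix a Cayley graph $X$ of $G$ with respect to a finite symmetric generating set $S$, on which $G$ acts on the left by isometries; this action extends to an action by homeomorphisms on $\partial X = \partial G$, which is the action implicit in Definition \ref{def:1}. Two elementary observations drive the argument, and I would record both, since they are essentially \cite[Lemma 1.2]{PS}: first, if $Y_1, Y_2 \subset X$ lie at finite Hausdorff distance, then $\Lambda_{c}(Y_1) = \Lambda_{c}(Y_2)$; and second, for any subset $Y \subset X$ and any $g \in G$, one has $\Lambda_{c}(gY) = g\,\Lambda_{c}(Y)$.

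For the first observation, assume $Y_1$ and $Y_2$ are at Hausdorff distance at most $D$ and let $\xi \in \Lambda_{c}(Y_1)$. Given any geodesic ray $\gamma$ asymptotic to $\xi$, Definition \ref{def:1} provides $R < \infty$ and points $y_n \in Y_1 \cap N_R(\gamma)$ with $y_n \to \xi$; pick $y_n' \in Y_2$ with $d(y_n, y_n') \le D$. Then $y_n' \in Y_2 \cap N_{R+D}(\gamma)$, and since the Gromov product satisfies $(y_n \mid y_n')_{x_0} \ge d(x_0, y_n) - D \to \infty$, the sequence $\{y_n'\}$ also converges to $\xi$ in $\partial X$; hence $\xi \in \Lambda_{c}(Y_2)$, and symmetry gives equality. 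For the second observation, left translation $L_g$ is an isometry of $X$ carrying geodesic rays asymptotic to $\xi$ bijectively onto geodesic rays asymptotic to $g\xi$, carrying $N_R(\gamma)$ onto $N_R(L_g\gamma)$, and carrying sequences converging to $\xi$ onto sequences converging to $g\xi$; reading Definition \ref{def:1} through $L_g$ then gives $\xi \in \Lambda_{c}(Y) \iff g\xi \in \Lambda_{c}(gY)$, i.e.\ $\Lambda_{c}(gY) = g\,\Lambda_{c}(Y)$.

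With these in hand the lemma is immediate. Part (2) is the second observation applied to $Y = H$. For part (1), note that $gHg^{-1} = (gH)g^{-1}$ is the image of $gH$ under right translation by $g^{-1}$, and $d(x, xg^{-1}) = |g^{-1}|_S$ for every vertex $x$ of $X$, so $gH$ and $gHg^{-1}$ lie at Hausdorff distance at most $|g^{-1}|_S < \infty$; the first observation then yields $\Lambda_{c}(gHg^{-1}) = \Lambda_{c}(gH)$, and combining with part (2) both equal $g\,\Lambda_{c}(H)$.

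The proof is short and I do not anticipate a real obstacle; the only point requiring a moment's care is the claim, used in the first observation, that a uniformly bounded perturbation of a sequence converging to a boundary point converges to the same point, together with the interaction between this and the ``for every geodesic ray'' quantifier in the definition of a conical limit point. Both are handled by the Gromov product estimate above, which is where hyperbolicity of $G$ enters.
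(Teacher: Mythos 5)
Your proof is correct, and it follows essentially the route the paper intends: the paper gives no written argument for this lemma, simply noting that both parts ``easily follow from \cite[Lemma 1.2]{PS}'', and your two observations (invariance of $\Lambda_{c}$ under finite Hausdorff distance, verified via the Gromov-product estimate, and equivariance under the isometric left action) are precisely the content of that citation, applied to $gH$ versus $gHg^{-1}$ exactly as the paper's statement requires.
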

	
	This also holds for the set of non-conical limit points. Let $\Lambda_{nc}(H) := \Lambda(H) \setminus {\Lambda_{c}(H)}$ denote the set of non-conical limit points. Then clearly $\Lambda_{nc}(gHg^{-1}) = \Lambda_{nc}(gH)$ and $\Lambda_{nc}(gH) = g\Lambda_{nc}(H)$.
	
	\begin{definitiona}\label{CT}\it \textbf{Cannon-Thurston map:}
		{\it Let $X, Y$ be proper hyperbolic metric spaces and let $f: X \to Y$ be a proper embedding. A {Cannon-Thurston} (CT) {map} $\bar{f}: \overline{X} \to \overline{Y}$ is a continuous extension of $f$.}
		
		Here, $\overline{X}= X \cup \partial{X}$ and $\overline{Y}= Y \cup \partial{Y}$, i.e., their respective visual compactifications. We denote $\overline{f}|_{\partial{X}}$ by $\partial{f}$. 
	\end{definitiona}
	
	By \cite[Lemma 2.6]{PS}, if Cannon Thurston map exists for the map  $f: X \to Y$, where $f, X, Y$ are as in the above definition, then $\partial{f}(\partial{X}) = \Lambda(f(X))$.
	
	Let $X$ be a geodesic metric space. For any $x, y \in X$, a geodesic joining $x$ and $y$ is denoted by $[x,y]$. The following are two basic results of $\delta$-hyperbolic metric spaces.
	
	
	\begin{lemma}\label{lem7}\textup{\cite{Mj}} 	
		Given $\delta >0$, there exists $D$, $C_1$ such that if $a, b, c, d$ are vertices of a $\delta$-hyperbolic metric space $(X,d)$, with $d(a,[b,c]) = d(a,b)$, $d(d,[b,c]) = d(c,d)$ and $d(b,c) \geq D$ then $[a,b] \cup [b,c] \cup [c,d]$ lies in a $C_1$-neighbourhood of any geodesic joining $a$ and $b$.
	\end{lemma}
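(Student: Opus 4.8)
\textbf{Proof proposal for Lemma \ref{lem7}.}

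The plan is to show that the concatenated path $[a,b]\cup[b,c]\cup[c,d]$ is a quasigeodesic with uniform constants depending only on $\delta$, and then invoke the Morse stability lemma for $\delta$-hyperbolic spaces to conclude it lies in a bounded neighbourhood of $[a,d]$ (note the statement says ``joining $a$ and $b$,'' but the geometry forces this to mean the geodesic $[a,d]$; I will work with $[a,d]$). The two hypotheses $d(a,[b,c])=d(a,b)$ and $d(d,[b,c])=d(c,d)$ say that $b$ is a nearest point on $[b,c]$ to $a$, and $c$ is a nearest point on $[b,c]$ to $d$. The key geometric consequence I would extract first is that the angle-like quantities at $b$ and $c$ are close to straight, i.e. the Gromov products $(a\mid c)_b$ and $(b\mid d)_c$ are bounded above by a constant multiple of $\delta$. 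This is the standard fact that projection onto a geodesic in a hyperbolic space is coarsely well-behaved: if $b$ realizes $d(a,[b,c])$, then any geodesic $[a,b]$ meets $[b,c]$ at a coarsely right angle, so $(a\mid c)_b \le K\delta$ for a universal $K$.

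First I would establish the nearest-point estimate precisely. Using $\delta$-thinness of the triangle with vertices $a$, $b$, $c$, together with the fact that $b$ minimizes distance from $a$ to $[b,c]$, I would bound $(a\mid c)_b = \tfrac12\big(d(a,b)+d(b,c)-d(a,c)\big)$ by a constant depending only on $\delta$; symmetrically $(b\mid d)_c \le K'\delta$. These two bounds are exactly the ``no backtracking at the corners'' estimates needed for a local-to-global quasigeodesic argument. The role of the hypothesis $d(b,c)\ge D$ is to guarantee that the middle segment is long enough that the bounded corner defects cannot accumulate into a short-circuit: if $b$ and $c$ were close, the path could fold back on itself, but once $d(b,c)$ exceeds a threshold $D=D(\delta)$, the endpoints $a$ and $d$ are forced to lie on opposite ends of the configuration and $d(a,d)$ is comparable to $d(a,b)+d(b,c)+d(c,d)$.

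Next I would prove the quasigeodesic estimate directly rather than quoting a black-box local-to-global theorem, since only three segments are involved. Writing $L = d(a,b)+d(b,c)+d(c,d)$ for the length of the concatenation, I would show $d(a,d)\ge \lambda L - \mu$ for explicit $\lambda,\mu$ depending on $\delta$ and $D$. The two corner-defect bounds give $d(a,c)\ge d(a,b)+d(b,c)-2K\delta$ and $d(a,d)\ge d(a,c)+d(c,d)-2K'\delta$, and chaining these yields $d(a,d)\ge L - 2(K+K')\delta$; since each of $[a,b]$, $[b,c]$, $[c,d]$ is itself a geodesic, this additive lower bound upgrades the concatenation to a $(1,C)$-quasigeodesic with $C=C(\delta)$. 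I would set $C_1$ from the Morse lemma stability constant for such quasigeodesics.

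The main obstacle, and the step I expect to require the most care, is the derivation of the corner-defect bounds $(a\mid c)_b\le K\delta$ from the nearest-point hypotheses. The subtlety is that ``$b$ is a nearest point'' is a global minimization condition, and translating it into a local Gromov-product bound requires a projection lemma for hyperbolic spaces; I would prove this by contradiction, assuming $(a\mid c)_b$ is large and using $\delta$-thinness to produce a point on $[b,c]$ strictly closer to $a$ than $b$, contradicting minimality. Once this projection estimate is in hand the rest is bookkeeping, and the threshold $D$ emerges naturally as the point past which the additive defects $2(K+K')\delta$ are dominated by the middle segment length, ensuring the constants in the final neighbourhood bound $C_1$ stay uniform.
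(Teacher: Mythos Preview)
The paper does not prove this lemma; it is quoted from \cite{Mj} without argument, so there is nothing in the paper to compare against and I will simply assess your outline on its own merits.

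Your overall strategy---show that $[a,b]\cup[b,c]\cup[c,d]$ is a uniform $(1,C)$-quasigeodesic and then invoke Morse stability---is the standard and correct one, and your reading of the typo (the target geodesic should join $a$ to $d$) is right. The derivation of the first corner bound $(a\mid c)_b\le K\delta$ from the nearest-point hypothesis is fine.

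There is, however, a real (though easily repaired) gap in your chaining step. From the second corner bound $(b\mid d)_c\le K'\delta$ you jump to $d(a,d)\ge d(a,c)+d(c,d)-2K'\delta$, which is the statement $(a\mid d)_c\le K'\delta$. These are not the same inequality: the defect you controlled at $c$ involves $b$, not $a$, and nothing you have written forces the defect with respect to $a$ to be small. This is precisely the place where the hypothesis $d(b,c)\ge D$ does real work, and not merely, as you suggest, to ``keep the constants uniform.'' The fix is one application of the Gromov four-point inequality at basepoint $c$:
\[
(b\mid d)_c \;\ge\; \min\big((a\mid b)_c,\ (a\mid d)_c\big)-\delta.
\]
From your first corner bound one computes $(a\mid b)_c\ge d(b,c)-K\delta\ge D-K\delta$. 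Thus as soon as $D>(K+K'+1)\delta$ the minimum on the right must be $(a\mid d)_c$, yielding $(a\mid d)_c\le (K'+1)\delta$, and then your additive chaining $d(a,d)\ge L-\mathrm{const}\cdot\delta$ goes through. With this correction in place the concatenation is a $(1,C)$-quasigeodesic (the verification for arbitrary subpaths, not just the endpoints, is routine since each piece is a geodesic and the corner defects are bounded), and Morse stability gives the constant $C_1$.
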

	
	\begin{lemma}\label{lem8}\textup{\cite{Mj}}
		Let $(X,d)$ be $\delta$-hyperbolic metric space. Let $\gamma$ be geodesic in $Y$ and $x \in X$. Let $y$ be a nearest point projection of $x$ on $\gamma$. Then for any $z \in \gamma$, a geodesic path from $x$ to $y$ followed by a geodesic path from $y$ to $z$ is a $k$-quasigeodesic, for some $k=k(\delta)$.
	\end{lemma}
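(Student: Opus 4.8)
The plan is to show the concatenated path is in fact a $(1,C)$-quasigeodesic for a constant $C=C(\delta)$, which is a special case of a $k$-quasigeodesic with $k=\max\{1,C\}$. Let $\sigma$ be the arc-length parametrization of the concatenation of $[x,y]$ and $[y,z]$, so its total length is $d(x,y)+d(y,z)$. Since $\sigma$ is $1$-Lipschitz, the only nontrivial requirement is the lower bound $d(\sigma(s),\sigma(t)) \ge |s-t| - C$. For two parameters lying in the same geodesic piece this is an equality, so the entire content is concentrated in the single coarse inequality $d(x,y)+d(y,z) \le d(x,z) + C$; a direct computation using that $[x,y]$ and $[y,z]$ are geodesics (so $d(x,p)+d(p,y)=d(x,y)$ for $p\in[x,y]$, and similarly on $[y,z]$) upgrades this endpoint inequality to $d(p,y)+d(y,q)\le d(p,q)+C$ for every $p\in[x,y]$ and $q\in[y,z]$, which is exactly the lower quasigeodesic bound across the junction. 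Equivalently, the task reduces to bounding the Gromov product $(x\cdot z)_y$ by $C/2$.

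First I would reduce to the case $d(y,z)\ge D$, where $D=D(\delta)$ is the constant from \lemref{lem7}. When $d(y,z) < D$ the triangle inequality already gives $d(x,y)+d(y,z) \le \left(d(x,z)+d(z,y)\right) + d(y,z) < d(x,z)+2D$, so the coarse inequality holds with $C=2D$.

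In the main case $d(y,z)\ge D$, I would invoke \lemref{lem7} with the degenerate choice $a=x$, $b=y$, $c=z$, $d=z$. The hypotheses hold: because $y$ is a nearest-point projection of $x$ onto $\gamma$ and $[y,z]\subseteq\gamma$ with $y\in[y,z]$, we have $d(x,[y,z])=d(x,\gamma)=d(x,y)$, i.e.\ $d(a,[b,c])=d(a,b)$; trivially $d(d,[b,c])=d(z,[y,z])=0=d(c,d)$; and $d(b,c)=d(y,z)\ge D$. The conclusion of \lemref{lem7} then places the path $[x,y]\cup[y,z]$ in the $C_1$-neighbourhood of the geodesic joining $a=x$ and $d=z$, namely $[x,z]$. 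In particular the junction point $y$ lies within $C_1$ of some point $y^{\ast}\in[x,z]$, whence $d(x,y)+d(y,z) \le \left(d(x,y^{\ast})+C_1\right)+\left(d(y^{\ast},z)+C_1\right) = d(x,z)+2C_1$, giving the coarse inequality with $C=2C_1$.

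Combining the two cases yields $d(x,y)+d(y,z)\le d(x,z)+C$ with $C=\max\{2C_1,2D\}$ depending only on $\delta$, and by the reduction of the first paragraph the concatenation is a $(1,C)$-quasigeodesic, hence a $k$-quasigeodesic with $k=k(\delta)$. The only delicate points are the legitimacy of the degenerate application of \lemref{lem7} (which forces the case split on $d(y,z)$ versus $D$) and the passage from the one-sided containment ``the path lies in a neighbourhood of $[x,z]$'' to the two-sided length estimate; the latter is precisely what the near-point $y^{\ast}$ of the junction accomplishes, and I expect this bookkeeping to be the main, though routine, obstacle.
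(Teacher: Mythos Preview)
The paper does not supply its own proof of \lemref{lem8}; the lemma is simply quoted from \cite{Mj} and used as a black box. So there is no in-paper argument to compare against.

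Your proof is correct. The reduction to the single endpoint inequality $d(x,y)+d(y,z)\le d(x,z)+C$ is valid, and your ``direct computation'' for the upgrade to arbitrary $p\in[x,y]$, $q\in[y,z]$ is just
\[
d(p,y)+d(y,q)=d(x,y)+d(y,z)-d(x,p)-d(q,z)\le d(x,z)+C-d(x,p)-d(q,z)\le d(p,q)+C,
\]
the last step being the triangle inequality for $x,p,q,z$. The degenerate application of \lemref{lem7} with $c=d=z$ is legitimate. One cosmetic remark: the statement of \lemref{lem7} in the paper contains an evident typo (``geodesic joining $a$ and $b$'' should read ``$a$ and $d$''); you have used the intended version, which is fine, but it is worth flagging.

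A shorter self-contained route, avoiding \lemref{lem7} and its constants $D,C_1$, is to bound the Gromov product directly: in the triangle $xyz$, the internal point on $[x,y]$ at distance $(x\cdot z)_y$ from $y$ lies within $4\delta$ of the internal point on $[y,z]\subset\gamma$ at the same distance from $y$; since $y$ minimises the distance from $x$ to $\gamma$, comparing $d(x,\cdot)$ at these two points forces $(x\cdot z)_y\le 4\delta$. This gives $C=8\delta$ immediately and sidesteps the case split.
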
     .
	\subsection{Relatively hyperbolic spaces}\label{3}
	
	Relative hyperbolic groups was introduced by Gromov in his article \cite{Gromov} on hyperbolic groups. Gromov \cite{Gromov}, Farb \cite{Farb} and Bowditch \cite{Bowditch} provide good reference to the various notions of relative hyperbolicity. 
	
	We briefly recall some important definitions and basic results here.
	
	\begin{definitiona}\label{conedoff}\it \textbf{Coned-off space} (see \textup{\cite{Farb}})\textbf{:}
		{\it Let $(X,d)$ be a path metric space and $\mathcal{A}= \{A_\alpha\}_{\alpha \in \Lambda}$ be a collection of uniformly separated subsets of $X$, i.e., there exists $\epsilon >0$ such that $d(A_\alpha,A_\beta)> \epsilon$ for all distinct $A_\alpha, A_\beta$ in $\mathcal{A}$. For each $A_\alpha \in \mathcal{A}$, introduce a vertex $\nu(A_\alpha)$ and join every element of $A_\alpha$ to the vertex by an edge of length $\frac{1}{2}$. This new space is denoted by $\widehat{X} = \mathcal{E}(X,\mathcal{A})$. The new vertices are called cone points and $H_\alpha \in \mathcal{H}$ are called horosphere-like sets. The new space is called a {\em coned-off space of $X$ with respect to $\mathcal{A}$}}.
	\end{definitiona}	
	
	\begin{terminology} 
		\begin{enumerate}
			\item Let $X$ be a geodesic metric space. For $x,y \in X$, $d(x,y)$ or $d_{X}(x,y)$ denotes the distance in the original metric on $X$. For any two subsets $A,B \subset X$, we denote the Hausdorff distance between them by $\textup{Hd}(A,B)$. For $C \geq 0$, $N_{C}(A)$ will denote the $C$-neighbourhood of $A$ in $X$.
			
			\item The induced length metric on $\widehat{X}$ is called the \textbf{electric metric}.
			\item For a geodesic metric space $(X,d)$, let $\widehat{X}$ denote the coned-off metric space relative to a collection of horosphere-like sets $\{A_\alpha\}_{\alpha \in \Lambda}$. Then for $x,y \in \widehat{X}$, $d_{\widehat{X}}(x,y)$ denotes the distance in the electric metric.
			\item Geodesics and quasigeodesics in $\widehat{X}$ are called \textbf{electric geodesics} and \textbf{electric quasigeodesics} respectively.
			\item Let $\gamma$ be a path in $X$. If $\gamma$ penetrates a horosphere-like set $A_\alpha$, we replace portions of $\gamma$ inside $A_\alpha$ by edges joining the entry and exit points of $\gamma$ in $A_\alpha$ to $\nu(A_\alpha)$. We denote the new path by $\hat{\gamma}$. If $\hat{\gamma}$ is an electric geodesic (resp. electric quasi-geodesic), we call $\gamma$ a \textbf{relative geodesic} (resp. \textbf{relative quasigeodesic}) in $X$.
			\item For any electric geodesic $\hat{\alpha}$, we denote the union of subsegments of $\hat{\alpha}$ lying outside the horosphere-like sets by $\alpha^b$. 
			\item A path $\gamma$ in $X$ is a path \textbf{without backtracking} if it does not return to any coset $A_\alpha$ after leaving it. 
			
		\end{enumerate} 
	\end{terminology}
	
	\begin{definitiona}\it \textbf{Bounded region penetration property:}
		{\it Let $(X,\mathcal{A})$ be as in Definition \ref{conedoff}. The pair $(X,\mathcal{A})$ satisfies {bounded region penetration property} if, for every $K \geq 1$, there exists $B = B(K)$ such that if $\beta$ and $\gamma$ are two relative $K$-quasi-geodesics without backtracking and joining the same pair of points, then
			
			\begin{enumerate}
				\item if $\beta$ penetrates a horosphere-like set $A_\alpha$ and $\gamma$ does not, then the length of the portion of $\beta$ lying inside $A_\alpha$ is at most $B$, with respect to the metric on $X$;
				
				\item if both $\beta$ and $\gamma$ penetrate a horosphere-like set $A_\alpha$, then the distance between the entry points of $\beta$ and $\gamma$ into $A_\alpha$ and the distance between the exit points of $\beta$ and $\gamma$ from $A_\alpha$ is at most $B$, with respect to the metric on $X$.
			\end{enumerate}
		}
	\end{definitiona}
	
	\begin{definitiona}\it \textbf{Strongly relative hyperbolic space}(see \textup{\cite{Farb}})\textbf{:}
		{\it A metric space $X$ is {\em strongly hyperbolic relative to a collection of subsets $\mathcal{A}$} if the coned-off space $\mathcal{E}(X, \mathcal{A})$ is a hyperbolic metric space and $(X,\mathcal{A})$ satisfies the bounded region penetration property.}
	\end{definitiona}
	
	\begin{definitiona}\it \textbf{Strongly relative hyperbolic group}(see \textup{\cite{Farb}})\textbf{:}
		{\it A group $G$ is {\em strongly hyperbolic relative to a collection of subgroups $\mathcal{H} = \{H_{\alpha}\}_{\alpha \in \Lambda}$} if the Cayley graph $X$, of $G$, is strongly hyperbolic relative to the collection of subgraphs corresponding to the left cosets of $H_{\alpha}$ in $G$ for every $\alpha \in \Lambda$.}
	\end{definitiona}
	
	Another equivalent definition of relatively hyperbolic groups that we use is due to Gromov.
	
	\begin{definitiona}\it \textbf{Hyperbolic cone} (see \textup{\cite{Gromov}})\textbf{:}
		{\it Let $(Y,d)$ be a geodesic space. Then the {\em hyperbolic cone} of $Y$, $Y^h = Y \times[0,\infty)$ with the path metric $d_h$ is defined as follows:
			
			\begin{enumerate}
				\item For $(x,t), (y,t) \in Y \times\{t\}$, $d_{h,t}((x,t),(y,t))= e^{-t}d(x,y)$, where $d_{h,t}$ is the induced path metric on $Y \times \{t\}$. Paths joining $(x,t)$ and $(y,t)$ that lie in $Y \times [0,\infty)$ are called horizontal paths.
				\item For $t, s \in [0,\infty)$ and any $x \in Y$, $d_{h}((x,t),(x,s))= |t-s|.$ Paths joining such elements are called vertical paths.
			\end{enumerate}
			In general, for $x, y \in Y^h$, $d_{h}(x,y)$ is the path metric induced by these vertical and horizontal paths.}
	\end{definitiona}
	
	\begin{definitiona}\it \textbf{Relatively hyperbolic space} (see \textup{\cite{Gromov}})\textbf{:}
		{\it Let $X$ be a geodesic metric space and $\mathcal{A}$ be a set of mutually disjoint subsets. For each $A\in\mathcal{A}$, we attach a hyperbolic cone $A^h$ to $A$ by identifying $(x,0)$ with $x$ for all $x \in A$. This space is denoted by $X^h = \mathcal{G}(X,\mathcal{A})$. $X$ is said to be {\em hyperbolic relative to} $\mathcal{A}$ in the sense of Gromov if $\mathcal{G}(X,\mathcal{A})$ is a complete hyperbolic space.}
	\end{definitiona}
	
	\begin{definitiona}\it \textbf{Relatively hyperbolic group} (see \textup{\cite{Gromov}})\textbf{:}
		{\it Let $G$ be a finitely generated group and $\mathcal{H}=\{H_\alpha\}_{\alpha \in \Lambda}$ be a collection of finitely generated subgroups. Let $\Gamma$ be the Cayley graph of $G$ and let $H_{(g,\alpha)}$ be the subgraph corresponding to the left coset $gH_{\alpha}$ in $\Gamma$. We denote it by  $\Gamma^h = \mathcal{G}(\Gamma,\{H_{(g,\alpha)}\}_{\alpha \in \Lambda, g \in G})$. $G$ is said to be \textbf{hyperbolic relative to $\mathcal{H}$} in the sense of Gromov, if $\Gamma^h$ is a complete hyperbolic metric space.}
	\end{definitiona}
	
	\begin{terminology}
		\begin{enumerate}
			\item For a geodesic metric space $(X,d)$, let ${X}^h$ denote the metric space with hyperbolic cones attached to the collection of horosphere-like sets. Then for $x,y \in {X}^h$, $d_{X^h}(x,y)$ denotes the distance in the path metric of $X^h$. For any two subsets $A,B \subset X^h$, we denote the Hausdorff distance between them by $\textup{Hd}_{X^h}(A,B)$.
			\item For $C \geq 0$, $N_{C}^h(Z)$ will denote a $C$-neighbourhood of a subset $Z$ of $(X^h,d_{X^h})$.
			
			\item A geodesic (resp. quasigeodesic) in $X^h$ is called a \textbf{hyperbolic geodesic} (resp. \textbf{hyperbolic quasigeodesic}).
			
			\item Let $\hat{\alpha}$ be an electric quasigeodesic without backtracking in $\widehat{X}$. For each $A_\alpha$ penetrated by $\hat{\alpha}$, let $x, y$ be the entry and exit points of $\hat{\alpha}$, respectively. We join $x$ and $y$ by a geodesic in $A^h_{\alpha}$. This gives a path in $X^h$ and we call it an \textbf{electro-ambient quasigeodesic}. This path is, in fact, a quasigeodesic in $X^h$.
			\item The electro-ambient quasigeodesic corresponding to an electric geodesic $\hat{\alpha}$ is always denoted by $\alpha$.
			\item Let $G$ be hyperbolic relative to a collection of subgroups $\{H_{\alpha}\}$. Let $\Gamma$ denote a Cayley graph of $G$. Then $H_\alpha$ and their conjugates are called \textbf{parabolic subgroups}. In $\Gamma^h$, each hyperbolic cone has a single limit point in $\partial{\Gamma}^h$ and it is called a \textbf{parabolic limit point}.
			
		\end{enumerate} 
	\end{terminology}
	
	\begin{remark}\label{rmk1}
		Suppose a metric space $X$ is strongly hyperbolic relative to a collection of subsets $\mathcal{A}$, then the space obtained by coning off the hyperbolic cones, $\mathcal{E}(\mathcal{G}(X,\mathcal{A}),\mathcal{A}^h)$, is quasi-isometric to $\mathcal{E}(X,\mathcal{A})$. $\mathcal{E}(X,\mathcal{A})$ is isometrically embedded in $\mathcal{E}(\mathcal{G}(X,\mathcal{A}),\mathcal{A}^h)$ and $\mathcal{E}(\mathcal{G}(X,\mathcal{A}),\mathcal{A}^h)$ lies in a 1-neighbourhood of the image of $\mathcal{E}(X,\mathcal{A})$.
	\end{remark}
	
	\begin{lemma}\label{lem5}\textup{\cite[Lemma 1.2.31]{Pal}} Let $K \geq 1$, $\lambda \geq 0$, $\epsilon > 0$, $r \geq 0$. Suppose $X_1, X_2$ are geodesic spaces and $\mathcal{H}_{X_1}, \mathcal{H}_{X_2}$ are collections of $\epsilon$-separated and intrinsically geodesic closed subspaces of $X_1, X_2$ respectively. Let $\phi: X_1 \to X_2$ be a $(K,\lambda)$-quasi-isometry such that for each $H_1 \in \mathcal{H}_{X_1}$, there exists $H_2 \in \mathcal{H}_{X_2}$ such that $\textup{Hd}(\phi(H_1),H_2) \leq r$ in $X_2$ and $\textup{Hd}(\phi^{-1}(H_2),H_1) \leq r$ in $X_1$. Then $\phi: X_1 \to X_2$ induces a $(K^h,\lambda^h)$-quasi-isometry ${\phi}^h: X_{1}^h \to X_{2}^h$, for some $K^h \geq 1$, ${\lambda}^h \geq 0$.
	\end{lemma}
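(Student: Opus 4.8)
The plan is to build ${\phi}^h$ by extending $\phi$ into the attached hyperbolic cones ``levelwise'', and then to verify the two quasi-isometry inequalities by reducing every $X_i^h$-distance to three ingredients that ${\phi}^h$ is seen to preserve: the base metric $d_{X_i}$, the intrinsic metrics $d_{H}$ of the horosphere-like sets, and the height (cone) coordinate. Concretely, I would first fix the correspondence $H_1 \mapsto H_2$ supplied by the hypothesis and, for each $H_1 \in \mathcal{H}_{X_1}$, choose a map $\psi_{H_1}\colon H_1 \to H_2$ sending $x$ to a nearest point of $H_2$ to $\phi(x)$; the condition $\textup{Hd}(\phi(H_1),H_2) \leq r$ guarantees $d_{X_2}(\phi(x),\psi_{H_1}(x)) \leq r$. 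I then set ${\phi}^h = \phi$ on the base $X_1 \subset X_1^h$ and ${\phi}^h(x,t) = (\psi_{H_1}(x),t)$ on the cone $H_1^h = H_1 \times [0,\infty)$, so that the height coordinate is preserved exactly. This is coarsely well defined along the seams $t=0$, since the two prescriptions differ there by at most $r$ in $X_2 \subset X_2^h$.

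The crux is to prove that each $\psi_{H_1}$ is a quasi-isometry $(H_1,d_{H_1}) \to (H_2,d_{H_2})$ for the \emph{intrinsic} metrics, with constants depending only on $K,\lambda,r,\epsilon$. The coarse-Lipschitz (upper) bound is obtained by taking an intrinsic geodesic in $H_1$, subdividing it into unit steps (possible as $H_1$ is intrinsically geodesic), pushing the steps forward by $\phi$ and projecting back to $H_2$: consecutive images lie within a uniform ambient distance $M = M(K,\lambda,r)$ in $X_2$, after which one joins them by intrinsic geodesics in $H_2$ and sums. The reverse inequality follows by running the same argument for a quasi-inverse of $\phi$, using the symmetric hypothesis $\textup{Hd}(\phi^{-1}(H_2),H_1) \leq r$. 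The delicate point --- and the main obstacle of the whole lemma --- is to pass from an ambient bound to an intrinsic bound inside a single horosphere-like set, i.e.\ to bound $d_{H_2}(w,w')$ in terms of $d_{X_2}(w,w')$ for $w,w' \in H_2$ uniformly over the family; this is exactly where the geometry of the horosphere-like sets enters, since for the relatively hyperbolic horoballs to which the lemma is applied the intrinsic metric is a controlled (at worst exponential) function of the ambient metric, so ambient control does transfer to intrinsic control.

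With this in hand I would record an approximate distance formula in $X_i^h$: any efficient path decomposes into horizontal travel in the base (measured by $d_{X_i}$), vertical travel in and out of cones (measured by the height coordinate), and horizontal travel at positive height in a cone over $H$, which costs, up to additive and multiplicative constants, $2\log\bigl(1 + d_{H}(\cdot,\cdot)\bigr)$. Since ${\phi}^h$ preserves heights exactly, distorts $d_{X_i}$ by the quasi-isometry constants of $\phi$, and distorts each $d_{H}$ by the constants from the previous step --- and since the logarithm absorbs multiplicative constants into additive ones --- each ingredient is preserved up to uniform constants. A short case analysis (both points in the base; both in a common cone; in distinct cones; one in the base and one in a cone), in each case connecting the two points through nearest base points of the relevant horosphere-like sets and invoking the formula, then yields both quasi-isometry inequalities with uniform $K^h,\lambda^h$.

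Finally I would establish coarse surjectivity. As $\phi$ is coarsely onto $X_2$, the base points are covered; and the symmetric Hausdorff hypothesis makes the correspondence $H_1 \mapsto H_2$ coarsely onto the family $\mathcal{H}_{X_2}$ (a given $H_2$ admits a corresponding $H_1$ by $\textup{Hd}(\phi^{-1}(H_2),H_1) \leq r$, and $\epsilon$-separation keeps this correspondence locally finite), so every cone $H_2^h$ is, up to bounded error and with heights preserved, the image of the cone $H_1^h$ over a corresponding $H_1$. Hence ${\phi}^h$ is coarsely surjective, completing the proof that ${\phi}^h\colon X_1^h \to X_2^h$ is a $(K^h,\lambda^h)$-quasi-isometry.
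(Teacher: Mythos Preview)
The paper does not supply its own proof of this lemma; it is quoted from \cite[Lemma 1.2.31]{Pal} and used as a black box, so there is nothing in the paper to compare your argument against.

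Regarding the proposal itself: the overall architecture (extend $\phi$ levelwise into the cones, reduce to an approximate distance formula built from base distance, cone height, and $\log d_H$, then check coarse surjectivity) is exactly the natural one and matches how such results are typically proved. However, the step you yourself flag as ``delicate'' is not merely delicate---under the hypotheses \emph{as stated} it is a genuine gap. To make $\psi_{H_1}\colon (H_1,d_{H_1})\to(H_2,d_{H_2})$ a uniform quasi-isometry (or even to control $\log d_{H_2}$ by $\log d_{H_1}$, which is all the cone metric needs), your subdivision argument must convert an \emph{ambient} bound $d_{X_2}(w,w')\le M$ into an \emph{intrinsic} bound $d_{H_2}(w,w')\le f(M)$ uniformly over the family $\mathcal{H}_{X_2}$. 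The lemma only assumes the $H$'s are $\epsilon$-separated and intrinsically geodesic; it does not assume they are uniformly properly embedded, and without that nothing prevents two points of $H_2$ at ambient distance $1$ from having arbitrarily large intrinsic distance. Your escape---that in the intended application the horosphere-like sets are cosets in a Cayley graph and hence uniformly properly embedded---is correct for the application, but it does not prove the lemma as written. Either add a uniform proper embedding hypothesis (note that the companion result Lemma~\ref{lem2} carries exactly this assumption) or make explicit that your argument proves the lemma only under that extra hypothesis; the cone metric on $H^h$ is built from $d_H$, so I do not see a route that bypasses intrinsic control entirely.
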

	
	\begin{lemma}\label{lem2}\textup{\cite[Lemma 1.2.19]{Pal}}
		Let $X$ be a geodesic metric space hyperbolic relative to a collection of uniformly $\epsilon$-separated, uniformly properly embedded closed subsets, in the sense of Gromov. Then $X$ is properly embedded in $X^h$ i.e., for all $M > 0$, there exists $N=N(M)$ such that $d_{X^h}(i(x),i(y)) \leq M$ implies $d(x,y) \leq N$, for every $x,y \in X$. Here $i :X \to X^h$ is the inclusion map.
	\end{lemma}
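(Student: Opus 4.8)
The plan is to prove \lemref{lem2} by contradiction, exploiting the fact that the horosphere-like sets are \emph{uniformly properly embedded} in $X$. Suppose, for contradiction, that the conclusion fails: there is an $M>0$ and sequences $x_n, y_n \in X$ with $d_{X^h}(i(x_n),i(y_n)) \le M$ for all $n$, while $d(x_n,y_n) \to \infty$. I would fix a hyperbolic geodesic $\sigma_n$ in $X^h$ joining $i(x_n)$ to $i(y_n)$ of length at most $M + 1$. Since the only way distances can shrink in passing from $X$ to $X^h$ is by travelling through the attached hyperbolic cones $A^h$, the strategy is to analyze how much genuine $X$-distance such a bounded-length cone excursion can bridge, and to show this is itself uniformly bounded. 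That uniform bound will contradict $d(x_n,y_n)\to\infty$.

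The key geometric input is the structure of a hyperbolic cone $A^h = A \times [0,\infty)$ with the metric $d_h$: by the defining relation $d_{h,t}((x,t),(y,t)) = e^{-t} d_A(x,y)$, a horizontal segment at height $t$ is exponentially cheaper than at the base, so to connect two base points $x,y \in A$ whose intrinsic distance in $A$ is large, an efficient path in $A^h$ rises to some height $t$, travels horizontally, and descends. A direct computation shows the $d_h$-distance between $(x,0)$ and $(y,0)$ is comparable to $\log(d_A(x,y))$ (more precisely $2\log(1 + d_A(x,y)/2)$ or a similar expression), and in particular is \emph{finite and monotincreasing} in $d_A(x,y)$. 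The crucial point I would extract is the converse estimate: if $d_h((x,0),(y,0)) \le L$ then $d_A(x,y)$ is bounded by a function of $L$ alone. First I would establish this single-cone estimate, then patch together consecutive cone excursions along $\sigma_n$.

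The main work is then to control the interaction between the $M$-bounded path $\sigma_n$ in $X^h$ and the original metric $d$. I would decompose $\sigma_n$ into the maximal subpaths lying in the attached cones $A^h_\alpha$ and the complementary subpaths lying in (the image of) $X$ itself. On the latter pieces, $d$-length is bounded by $d_h$-length since $X$ is embedded in $X^h$; on each cone piece, the single-cone estimate above bounds the $d_A$-distance between entry and exit points by a function of the $d_h$-length of that piece. Because the horosphere-like sets are \emph{uniformly properly embedded} in $X$ (this is precisely the hypothesis invoked), the intrinsic $d_A$-bound on entry-to-exit converts to a uniform $d_X$-bound via a single proper-embedding gauge valid for all $\alpha$. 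Summing the $d$-contributions of the finitely many pieces, and using that the total $d_h$-length is at most $M+1$ so only boundedly many substantial excursions occur, yields a bound $d(x_n,y_n) \le N(M)$ depending only on $M$, contradicting $d(x_n,y_n)\to\infty$.

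The step I expect to be the main obstacle is the bookkeeping for the cone excursions: a priori $\sigma_n$ could dip into very many cones, and although each individual excursion is cheap, I must rule out that many small excursions accumulate unboundedly in the $d$-metric. The resolution is that each excursion into a cone $A^h_\alpha$ that actually reduces $d$-distance must first reach positive height, costing a definite amount of $d_h$-length, so the total $d_h$-budget of $M+1$ caps the number of nontrivial excursions; the $\epsilon$-separation of the horosphere-like sets keeps the between-excursion segments in $X$ genuinely contributing. Making the conversion from intrinsic $d_A$-distance to ambient $d_X$-distance \emph{uniform} across the whole collection $\mathcal{A}$ is exactly where the uniform proper embedding hypothesis is essential, and I would state the gauge explicitly before running the summation.
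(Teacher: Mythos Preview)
The paper does not give its own proof of this lemma: it is quoted directly from Pal's thesis \cite[Lemma~1.2.19]{Pal} and used as a black box, so there is no in-paper argument to compare your proposal against. Your outline is the standard one and is essentially correct: pick a path in $X^h$ of length at most $M+1$ from $x$ to $y$, split it into maximal cone excursions and complementary arcs in $X$, use the logarithmic distortion $d_{A^h}\big((a,0),(b,0)\big)\asymp 2\operatorname{arcsinh}\!\big(d_A(a,b)/2\big)$ to bound $d_A(\text{entry},\text{exit})$ on each excursion, and sum.

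Two details in your write-up should be tightened. First, you invoke \emph{uniform proper embedding} of the $A_\alpha$ in $X$ to pass from a $d_{A_\alpha}$-bound to a $d_X$-bound; that is the wrong direction. Proper embedding says $d_X$ small $\Rightarrow$ $d_{A_\alpha}$ small. The implication you actually need, $d_{A_\alpha}(a,b)$ bounded $\Rightarrow$ $d_X(a,b)$ bounded, is immediate because the intrinsic path metric on $A_\alpha$ dominates the ambient metric (a path in $A_\alpha$ is a path in $X$). So the step is correct, but not for the reason you give; the uniform-proper-embedding hypothesis is not what is doing the work here. Second, your resolution of the ``many small excursions'' issue (``each nontrivial excursion must reach positive height, costing a definite amount'') is a bit loose, since an excursion reaching height $h$ costs only $2h$, which can be arbitrarily small. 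The clean fix is to note that an excursion of $d_h$-length $L$ yields $d_{A_\alpha}(\text{entry},\text{exit})\le 2\sinh(L/2)\le L\,e^{(M+1)/2}$, so the total $d_X$-contribution of all cone excursions is at most $e^{(M+1)/2}\sum_i L_i\le (M+1)e^{(M+1)/2}$, with no need to count excursions at all (and no need for the $\epsilon$-separation in this particular estimate). With these adjustments your argument goes through.
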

	
	Using \lemref{lem2}, we prove the following result.
	
	\begin{lemma}\label{lem4}
		Let $X$ be a geodesic metric space hyperbolic relative to a collection of uniformly $\epsilon$-separated, uniformly properly embedded closed subsets $\mathcal{A} = \{A_{\alpha}\}_{\alpha \in \Lambda}$, in the sense of Gromov. Let $\gamma$ be a geodesic ray in $X^h$ such that $\gamma(\infty)$ is not a parabolic limit point. Then for any $R>0$, if $x \in X$ such that $x \in N_{R}^h(\gamma)$, then there exists $R_{1} = R_{1}(R)$ such that $x \in N_{R_1}(\gamma \cap X)$.
	\end{lemma}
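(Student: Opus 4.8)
The plan is to deduce this from \lemref{lem2}. Since $X \hookrightarrow X^h$ is a proper embedding, a bound on $d_{X^h}(x,q)$ for a point $q \in X$ yields a bound on $d_X(x,q)$ depending only on the former; so it is enough, given $x \in X \cap N_R^h(\gamma)$, to produce a point $q \in \gamma \cap X$ with $d_{X^h}(x,q)$ bounded in terms of $R$. Throughout I assume $\gamma$ is based at a point of $X$, i.e.\ $\gamma(0) \in X$; this is the case in the intended applications, and some assumption of this kind is needed here, since a geodesic ray starting deep inside a cone and tunnelling out can be $X^h$-close to points of $X$ that are arbitrarily $X^h$-far from $\gamma \cap X$.

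First I fix $p \in \gamma$ with $d_{X^h}(x,p) \le R$. On each hyperbolic cone $A_\alpha^h = A_\alpha \times [0,\infty)$ the height coordinate is $1$-Lipschitz for $d_{X^h}$, and any path in $X^h$ from $x \in X$ into the interior of $A_\alpha^h$ must cross the floor $A_\alpha$, which sits at height $0$; hence $p$ has height at most $R$ in whichever cone (if any) contains it. If $p \in X$, then \lemref{lem2} gives $d_X(x,p) \le N(R)$, and since $p \in \gamma \cap X$ we may take $R_1 = N(R)$. Otherwise $p$ lies at height $t \in (0,R]$ in the interior of a cone $A_\alpha^h$. Since the only boundary point at infinity of a hyperbolic cone is its parabolic centre and $\gamma(\infty)$ is non-parabolic, $\gamma$ does not remain in $A_\alpha^h$; thus the maximal subsegment $\sigma$ of $\gamma$ through $p$ that stays in the interior of $A_\alpha^h$ is a compact geodesic segment of $X^h$ whose two endpoints $q^-, q^+$ lie on the floor $A_\alpha \subseteq X$, hence in $\gamma \cap X$. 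As $\sigma$ lies in $A_\alpha^h$, its length equals $d_{X^h}(q^-,q^+) \le d_{A_\alpha^h}(q^-,q^+)$, forcing equality, so $\sigma$ is in fact a geodesic of $A_\alpha^h$.

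Now I invoke the standard geometry of a hyperbolic cone: a geodesic between two floor points rises to a single maximal height and then descends, and any point on such a geodesic of height $\le R$ lies within distance $R + C_0$, in the cone metric and hence \emph{a fortiori} in $d_{X^h}$, of one of its two endpoints, where $C_0$ is a universal constant --- this is essentially the statement that such a geodesic is uniformly close to vertical below its apex. Applying this to $p \in \sigma$ gives $d_{X^h}(p,q) \le R + C_0$ for $q \in \{q^-,q^+\}$, so $d_{X^h}(x,q) \le 2R + C_0$, and \lemref{lem2} yields $d_X(x,q) \le N(2R+C_0)$. Taking $R_1 = N(2R+C_0)$ completes the proof.

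The only substantial ingredient is \lemref{lem2}. The two geometric facts carrying the argument are that a geodesic ray of $X^h$ not converging to a parabolic point meets each cone in finitely many excursions between floor points (so, with $\gamma(0) \in X$, no excursion has a non-floor endpoint), and the quantitative cone estimate on points of bounded height in such an excursion. I expect the latter, though a familiar horoball computation, to be the point requiring the most care, the rest being bookkeeping.
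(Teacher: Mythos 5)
Your proof is correct and takes essentially the same route as the paper: reduce via \lemref{lem2} to bounding $d_{X^h}(x,\cdot)$ from a floor point of $\gamma$, and use the up--then--down geometry of the cone excursion, where the ``standard cone fact'' you invoke without proof is precisely what the paper establishes explicitly by comparing the excursion with the vertical--horizontal--vertical quasigeodesic $\lambda_1\ast\lambda_0\ast\lambda_2$ and applying stability of quasigeodesics. Your added hypothesis $\gamma(0)\in X$ is a fair catch: the paper implicitly assumes the entry and exit points $a,b$ of $\gamma$ into each cone lie on the floor, which holds in all of its applications (the rays there are based at points of $\alpha^{b}\subset X$).
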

	
	\begin{proof}
		Let $y \in \gamma$ such that $d_{X^h}(x,y) \leq R$. If $y \in \gamma \cap X$, by \lemref{lem2}, there exists $N_1 = N(R)$ such that $d_{X}(x,y) \leq N_1$. Now, suppose $y \in \gamma \cap A_{\alpha}^h$, for some $\alpha \in \Lambda$. Let $\gamma_1$ denote the geodesic segment $\gamma|_{[a,b]}$, where $a$ denotes the entry point of $\gamma$ into $A_\alpha$ and $b$ denotes the exit point of $\gamma$ from $A_\alpha$. Let $t \in [0,\infty)$ such that for $(a,t), (b,t) \in A_\alpha \times\{t\}$, $d_{h,t}((a,t),(b,t))= e^{-t}d_{A_\alpha}(a,b) = 1$, where $d_{h,t}$ is the induced path metric on $A_\alpha \times \{t\}$. 
		
		Then, $d_{A_\alpha}(a,b) = e^t$ and $t = \ln{d_{A_\alpha}(a,b)}$. Let $\lambda_1$ and $\lambda_2$ denote the vertical paths in ${A}_{\alpha}^h$ joining $(a,0)$ to $(a,t)$ and $(b,0)$ to $(b,t)$ respectively. Let $\lambda_0$ denote the horizontal path in ${A}_{\alpha}^h$ joining $(a,t)$ to $(b,t)$. The path $\lambda = \lambda_1\ast\lambda_0\ast\lambda_2$ is a quasigeodesic in ${A}_{\alpha}^h$ and by stability of quasigeodesics, there exists $K_1 >0$ such that $\textup{Hd}_{X^h}(\gamma_1,\lambda) \leq K_1$. Since $y \in \gamma_1$, there exists $z \in \lambda$ such that $d_{X^h}(y,z) \leq K_1$ and we have, $d_{X^h}(x,z) \leq R+ K_1$. But length of the quasigeodesic $\lambda$ is $2t+1$ and clearly, $t \leq R + K_1$ and $d_{X^h}((a,0),z) \leq t +1 \leq R + K_1 + 1$.  Thus, $d_{X^h}((a,0),x) \leq 2(R+K_1)+1$. By \lemref{lem2}, there exists $N_2 = N(2(R+K_1)+1)$ such that $d_{X}(x,y) \leq N_2$. 
		
		For $R_1 = \max\{N_1,N_2\}$, we have $x \in N_{R_1}(\gamma)$.	
	\end{proof}
	
	\begin{definitiona}\it \textbf{Electric projection}\textup{(see \cite{MjPal})}\textbf{:}
		{\it Let $Y$ be a space hyperbolic relative to the collection $\{A_\alpha\}_{\alpha \in \Lambda}$. Let $i : Y^h \to  \mathcal{E}(\mathcal{G}(Y,\mathcal{A}),\mathcal{A}^h)$ be the inclusion map. we identify $\mathcal{E}(\mathcal{G}(Y,\mathcal{A}),\mathcal{A}^h)$ with $\widehat{Y}$. Let $\hat{\alpha}$ be an electric geodesic in $\widehat{Y}$ and $\alpha$ be the corresponding electro-ambient quasigeodesic. Let ${\pi}_{\alpha}$ be a nearest point projection from $Y^h$ onto $\alpha$.
			Electric projection is the map $\hat{\pi}_{\hat{\alpha}} : \widehat{Y} \to \hat{\alpha}$ given by:
			
			$\text{For } x\in Y, \quad \hat{\pi}_{\hat{\alpha}}(x) = i({\pi}_{\alpha}(x))$.
			
			If $x$ is a cone point of a horosphere like set $A_{\beta} \in \mathcal{A}$, choose some $z \in A_{\beta}$ and define  $\hat{\pi}_{\hat{\alpha}}(x) = i({\pi}_{\alpha}(z)).$}
	\end{definitiona}
	
	\begin{lemma}\label{lem11}\textup{\cite[Lemma 1.16]{MjPal}}
		Let $Y$ be hyperbolic relative to $\mathcal{A}$. There exists a constant $P$ depending upon $\delta$, $D$, $C_1$ such that for any $A \in \mathcal{A}$ and $x,y \in A$ and a geodesic ${\hat{\alpha}}$ in $\widehat{Y}$, then $d_{\widehat{Y}}(i({\pi}_{\alpha}(x)),i({\pi}_{\alpha}(y)) \leq P$.
	\end{lemma}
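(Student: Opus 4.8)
The plan is to transfer the whole question into the Gromov cone space $Y^h$, which is genuinely $\delta$-hyperbolic, and to exploit the fact that $x,y$ lie in a common hyperbolic cone $A^h$. Write $b = \pi_{\alpha}(x)$ and $c = \pi_{\alpha}(y)$ for the two nearest-point projections onto the electro-ambient quasigeodesic $\alpha \subset Y^h$, so the quantity to be bounded is the electric distance $d_{\widehat{Y}}(i(b),i(c))$. Since the inclusion $i \colon Y^h \to \widehat{Y}$ is $1$-Lipschitz (coning off only introduces shortcuts), I first dispose of the easy case: if $d_{Y^h}(b,c) < D$ then $d_{\widehat{Y}}(i(b),i(c)) \le d_{Y^h}(b,c) < D$ and there is nothing more to prove. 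So the content lies in the case $d_{Y^h}(b,c) \ge D$.

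For that case I would invoke \lemref{lem7} with the four points $a = x$, $b = \pi_{\alpha}(x)$, $c = \pi_{\alpha}(y)$, $d = y$. Replacing $\alpha$ by a geodesic $\beta$ with the same endpoints (these are uniformly Hausdorff-close by stability of the $K$-quasigeodesic $\alpha$, so the projection points move only by a bounded amount that is absorbed into the final constant) lets me take $b,c$ to be genuine nearest-point projections onto $\beta$. Then $b$ is the closest point of $x$ on all of $\beta$, hence also on the subsegment $[b,c] \subseteq \beta$ that contains it, giving $d_{Y^h}(x,[b,c]) = d_{Y^h}(x,b)$; symmetrically $d_{Y^h}(y,[b,c]) = d_{Y^h}(y,c)$ (this is also what \lemref{lem8} encodes). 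With $d_{Y^h}(b,c) \ge D$ the hypotheses of \lemref{lem7} are met, and its conclusion places the concatenation $[x,b] \cup [b,c] \cup [c,y]$, in particular the segment $[b,c]$, inside the $C_1$-neighbourhood of a geodesic joining $x$ and $y$ in $Y^h$.

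The geometric heart of the argument is then to observe that $[x,y]$ cannot stray far from the cone $A^h$. Since $x,y \in A$, the ``tent'' path $\sigma$ that runs up the vertical fibre over $x$ to height $t = \ln d_{A}(x,y)$, across the horizontal segment of length $1$, and down the vertical fibre over $y$ is a uniform quasigeodesic lying entirely in $A^h$ — this is precisely the path estimate already used in the proof of \lemref{lem4}. By stability of quasigeodesics in the $\delta$-hyperbolic space $Y^h$, the geodesic $[x,y]$ lies in a neighbourhood $N_{c_0}^{h}(\sigma) \subseteq N_{c_0}^{h}(A^h)$ with $c_0 = c_0(\delta)$. Combining this with the previous paragraph gives $[b,c] \subseteq N_{C_1+c_0}^{h}(A^h)$, so in particular $b$ and $c$ are each within $Y^h$-distance $C_1 + c_0$ of points $b',c' \in A^h$.

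Finally I would cash this in electrically. Because the entire cone $A^h$ is coned to the single vertex $\nu(A^h)$ in $\widehat{Y}$, any two of its points are at electric distance at most $1$, so $d_{\widehat{Y}}(i(b'),i(c')) \le 1$; together with the $1$-Lipschitz bounds $d_{\widehat{Y}}(i(b),i(b')) \le C_1 + c_0$ and $d_{\widehat{Y}}(i(c),i(c')) \le C_1 + c_0$, the triangle inequality yields $d_{\widehat{Y}}(i(b),i(c)) \le 2(C_1 + c_0) + 1$. Setting $P = \max\{D,\, 2(C_1+c_0)+1\}$, enlarged by the stability constant coming from the $\alpha$-to-$\beta$ replacement, covers both cases and depends only on $\delta$, $D$, $C_1$ as claimed. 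The step I expect to be most delicate is the bookkeeping around the quasigeodesic $\alpha$: one must check that passing from $\alpha$ to a genuine geodesic $\beta$, together with the resulting coarse identification of the projection points, does not break the exact nearest-point equalities that \lemref{lem7} demands — this is where the stability estimates and the observation that a nearest point on $\beta$ is automatically a nearest point on any subsegment containing it have to be combined with care.
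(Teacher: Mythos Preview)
The paper does not give its own proof of this lemma: it is stated verbatim as a citation of \cite[Lemma~1.16]{MjPal}, followed only by the one-line remark that it makes the electric projection coarsely well-defined. There is therefore nothing in the paper to compare your proposal against.

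On its own merits your argument is correct and is in fact the standard one. The geometric core --- that $x,y\in A\subset A^h$ forces the geodesic $[x,y]$ in $Y^h$ to stay in a uniform neighbourhood of $A^h$ via the tent-path estimate, and that \lemref{lem7} then traps $[b,c]$ near $[x,y]$, hence near $A^h$, whose electric diameter is at most $1$ --- is exactly the mechanism behind the Mj--Pal proof. The one place to be slightly more careful is the passage from the electro-ambient quasigeodesic $\alpha$ to a geodesic $\beta$: rather than literally changing the projection points, it is cleaner to note that in a $\delta$-hyperbolic space nearest-point projection onto a $K$-quasigeodesic is coarsely Lipschitz and coarsely agrees with projection onto any Hausdorff-close geodesic, so the hypotheses of \lemref{lem7} hold up to an additive constant depending only on $\delta$ and the quasigeodesic constants of $\alpha$. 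With that adjustment the bookkeeping you flag at the end is routine.
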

	This implies that the electric projection is coarsely well-defined. 
	The following theorem, due to Bowditch, gives the equivalence between the two definitions of relative hyperbolicity: 
	
	\begin{theorem}\label{thm3}\textup{\cite{Bowditch}}
		The following are equivalent:
		\begin{enumerate}
			\item X is hyperbolic relative to the collection of uniformly separated subsets $\mathcal{A}$ in $X$.
			\item X is hyperbolic relative to the collection of uniformly separated subsets $\mathcal{A}$ in $X$ in the sense of Gromov.
			\item $X^h$ is hyperbolic relative to the collection $\mathcal{A}^h$.
		\end{enumerate}
	\end{theorem}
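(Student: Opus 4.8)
The plan is to treat condition (3) as the hub and prove $(1)\Leftrightarrow(3)$ and $(2)\Leftrightarrow(3)$, since the space $X^h = \mathcal{G}(X,\mathcal{A})$ is common to both: condition (2) asserts that $X^h$ is intrinsically hyperbolic, whereas condition (3) asserts that $X^h$ is strongly hyperbolic relative to the family of hyperbolic cones $\mathcal{A}^h$ in the coned-off-plus-bounded-region-penetration sense. Establishing these two equivalences and invoking transitivity then yields the full statement.

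For $(1)\Leftrightarrow(3)$, the main input is \rmkref{rmk1}: the space $\mathcal{E}(X^h,\mathcal{A}^h) = \mathcal{E}(\mathcal{G}(X,\mathcal{A}),\mathcal{A}^h)$ is quasi-isometric to $\widehat{X} = \mathcal{E}(X,\mathcal{A})$, with $\widehat{X}$ isometrically embedded as a $1$-dense subset. Since hyperbolicity of a geodesic space is a quasi-isometry invariant, the coned-off spaces in the two models are hyperbolic simultaneously, so the hyperbolicity halves of (1) and (3) coincide. It then remains to transfer the bounded region penetration property across the two models. I would set up a dictionary between relative quasigeodesics without backtracking in $X$ and paths in $X^h$: a cone-point detour of a relative quasigeodesic in $\widehat{X}$ corresponds, after insertion of a geodesic inside $A_\alpha^h$, to a controlled penetration of the cone $A_\alpha^h$, and conversely the entry and exit data of a path into $A_\alpha^h$ push down to entry and exit data into $A_\alpha$ in $X$ by \lemref{lem2}, which bounds the original metric in terms of the $X^h$ metric. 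This dictionary converts the penetration estimates of $(X,\mathcal{A})$ into those of $(X^h,\mathcal{A}^h)$ and back, giving the equivalence.

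For $(2)\Rightarrow(3)$, assuming $X^h$ is hyperbolic, each cone $A_\alpha^h$ is a uniformly quasiconvex, horoball-like subset, with a single limit point in $\partial X^h$ since horizontal distances in $A_\alpha \times \{t\}$ shrink like $e^{-t}$. Coning off a uniformly quasiconvex family in a hyperbolic space preserves hyperbolicity by Farb's electric geometry \cite{Farb}, and the quasiconvexity of the $A_\alpha^h$ inside the hyperbolic space $X^h$ furnishes the bounded region penetration property for $(X^h,\mathcal{A}^h)$, so (3) holds.

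The difficult direction, which I expect to be the main obstacle, is $(3)\Rightarrow(2)$: reconstructing the intrinsic hyperbolicity of $X^h$ from relative hyperbolicity with respect to the cones. Here I would use the electro-ambient quasigeodesic. Given a geodesic triangle in $X^h$, I project its sides to electric geodesics in $\mathcal{E}(X^h,\mathcal{A}^h)$, which form a thin triangle by (3). Each electric geodesic lifts to an electro-ambient quasigeodesic, a genuine quasigeodesic in $X^h$ obtained by replacing each cone-point detour with a geodesic inside the corresponding cone $A_\alpha^h$. The bounded region penetration property forces the entry and exit points of two sides into any common cone to be uniformly close, and inside that cone the intrinsic hyperbolicity of $A_\alpha^h$, a hyperbolic cone being itself hyperbolic with uniform constants, bounds the divergence of the two electro-ambient segments; outside the cones, electric thinness together with the coarse well-definedness of the electric projection (\lemref{lem11}) controls the segments. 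The crux is to patch these two regimes so that the resulting thinness constant for the $X^h$-triangle is uniform, independent of the triangle and of which cones are penetrated. This uniform patching, hinging on the horoball geometry of the cones and on the bounded region penetration property, is the heart of the argument; once it is in place, $X^h$ satisfies the thin-triangles condition and (2) follows.
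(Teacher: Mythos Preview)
The paper does not prove this theorem at all: it is stated with a citation to Bowditch \cite{Bowditch} and used as a black box. There is therefore no ``paper's own proof'' to compare against; your proposal is an attempt to supply an argument the author deliberately omitted.

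As to the sketch itself, the architecture is sensible but you should watch for circularity in the auxiliary results you invoke. \rmkref{rmk1} is stated in the paper under the hypothesis that $X$ is already strongly hyperbolic relative to $\mathcal{A}$, and \lemref{lem2} assumes relative hyperbolicity in the sense of Gromov; both are precisely among the conditions whose equivalence you are trying to establish, so you would need to verify that the underlying geometric facts (the $1$-density of $\widehat{X}$ in $\mathcal{E}(X^h,\mathcal{A}^h)$, and the proper embedding $X\hookrightarrow X^h$) hold without those hypotheses, or restructure the argument so that each lemma is only applied once its hypothesis has already been secured. The direction $(3)\Rightarrow(2)$ is, as you note, the substantive one; the patching of the cone and non-cone regimes is essentially the content of Bowditch's argument, and your outline is more a description of what must happen than a proof. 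If you want a genuine self-contained argument you will have to go back to \cite{Bowditch} (or the treatment in \cite{Pal}) for the details.
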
 
	
	\vspace{2mm}
	
	For a proper hyperbolic metric space $(Y,d)$, we can associate a topological space to it, i.e., its Gromov boundary $\partial{Y}$. Bowditch generalized the Gromov boundary for hyperbolic groups, to the context of relatively hyperbolic groups. 
	\begin{definitiona}\it \textbf{Bowditch boundary:}
		{\it Suppose $X$ is a metric space hyperbolic relative to a collection of subsets $\{A_\alpha\}_{\alpha \in \Lambda}$. Then the {\em Bowditch boundary} (or relative hyperbolic boundary) of $X$ with respect to $\{A_\alpha\}_{\alpha \in \Lambda}$ is the boundary of $X^h$, and it is denoted by $\partial{X}^h$.}
	\end{definitiona}
	
	So, for a group $G$ hyperbolic relative to a collection of subgroups $\mathcal{H}$, its boundary is the boundary of $\Gamma^h$, where $\Gamma$ is a Cayley graph of $G$. 
	
	We end this section with the following definitions of limit set intersection property for relatively hyperbolic groups:
	
	\begin{definitiona}\label{def:2}\it \textbf{Limit set intersection property:}
		{\it Suppose $G$ is a relatively hyperbolic group. Let $\mathcal{S}$ be a collection of subgroups of $G$. Then $\mathcal{S}$ is said to have the {\em limit intersection property} if for every $H$, $K \in \mathcal{S}$, $\Lambda(H) \cap \Lambda(K) = \Lambda(H \cap K)$.}
	\end{definitiona}
	
	\begin{definitiona}\it \textbf{Conical limit intersection property:}
		{\it A collection $\mathcal{S}$ of subgroups of a relatively hyperbolic group $G$ is said to have the {\em conical limit intersection property} if for every $H$, $K \in \mathcal{S}$, $\Lambda_{c}(H) \cap \Lambda_{c}(K) = \Lambda_{c}(H \cap K)$.}
	\end{definitiona}

	
	\section{Graph of groups}\label{sec4}
	
	We briefly recall some definitions related to the graph of groups. One may refer to \cite{Serre} and \cite{SW} for more details. 
	
	\begin{definitiona}\label{graph}\it \textbf{Graph} (see \textup{\cite{Serre}})\textbf{:}
		{\it A graph $Y$ is an ordered pair of sets $(V,E)$ with $V = V(Y)$, the set of vertices of $Y$ and a set $E = E(Y)$, the set of edges of $Y$, and a pair of maps 
			
			$E \to V \times V \hspace{9mm} e \mapsto (o(e),t(e))$;  \hspace{3mm}        and \hspace{3mm}		
			$E \to E \hspace{9mm} e \mapsto \bar{e}$
			
			satisfying the following conditions: $o(\bar{e})= t(e)$, $t(\bar{e})= o(e)$ and $\bar{\bar{e}}= e$ for all $e \in E$. Here, $o(e)$ is the initial vertex of the edge $e$ and $t(e)$ is the terminal vertex; $\bar{e}$ is the inverse of $e$, i.e., the edge $e$ with the opposite orientation.}
	\end{definitiona}
	
	\begin{definitiona}~
		\begin{enumerate}
			\item \it \textbf{Graphs of groups:} 	{\it A graph of groups $(\mathcal{G},Y)$ consists of a finite graph $Y$ with vertex set $V$ and egde set $E$ and for each vertex $v \in V$, there is a group $G_v$ (vertex group) and for each edge $e \in E$, there is a group $G_e$ (edge group), along with the monomorphisms:
				
				$\phi_{o(e)}:G_e \to G_{o(e)}$
				
				$\phi_{t(e)}:G_e \to G_{t(e)}$
				
				with the extra condition that $G_{\bar{e}}=G_e$.}
			
			\item \it \textbf{Graphs of relatively hyperbolic groups:}{\it A graph of groups $(\mathcal{G},Y)$ is a graph of relatively hyperbolic groups if for each $v \in V(Y)$, $G_v$ is hyperbolic relative to a collection of subgroups $\{H_{v,\alpha}\}_{\alpha}$ and for each $e \in E(Y)$, $G_e$ is hyperbolic relative to a collection of subgroups $\{H_{e,\alpha}\}_{\alpha}$.}
		\end{enumerate}
	\end{definitiona}
	
	\begin{definitiona}(see \textup{\cite{MjR}})~
		
		\begin{enumerate}
			\item \it \textbf{QI-embedded condition:} 
			{\it A graph of groups $(\mathcal{G},Y)$ is said to satisfy the {\em qi-embedded condition} if for every $e \in E(Y)$, the monomorphisms $\phi_{o(e)}$ and $\phi_{t(e)}$ are QI-embeddings.
				
				\item \textbf{Strictly type-preserving:} A graph of relatively hyperbolic groups is {\em strictly type-preserving} if for every $e \in E(Y)$, each $\phi_{o(e)}^{-1}(H_{v,\alpha})$ and $\phi_{t(e)}^{-1}(H_{v,\alpha})$ is either empty or some $H_{e,\alpha}$.
				
				\item \textbf{QI-preserving electrocution condition:} $(\mathcal{G},Y)$ satisfies {\em QI-preserving electrocution condition} if induced maps $\hat{\phi}_{o(e)}: \widehat{\Gamma}_{e} \to \widehat{\Gamma}_{o(e)}$ and $\hat{\phi}_{t(e)}: \widehat{\Gamma}_{e} \to \widehat{\Gamma}_{t(e)}$ are uniform qi-embeddings. Here, $\widehat \Gamma_{e}$, $\widehat \Gamma_{o(e)}$ and $\widehat \Gamma_{t(e)}$ denote the coned-off Cayley graphs of $G_e$, $G_{o(e)}$ and $G_{t(e)}$ respectively relative to the corresponding horosphere-like sets.} 
		\end{enumerate}
	\end{definitiona}
	
	\begin{definitiona}\it \textbf{Fundamental group} (see \textup{\cite{Serre}})\textbf{:} 
		{\it Let $(\mathcal{G},Y)$ be a graph of groups. Let $T$ be a maximal subtree of $Y$. Then the fundamental group $G = \pi_{1}(\mathcal{G},Y,T)$ of $(\mathcal{G},Y)$ is defined in terms of generators and relators as:
			
			The generating set is the disjoint union of generating sets of the vertex groups $G_v$ and the set $E(Y)$ of oriented edges of $Y$.
			
			Relators are the following:
			\begin{itemize}
				\item {relators from the vertex groups;}
				\item $\bar{e}= e^{-1};$
				\item $e\phi_{t(e)}(g)e^{-1}= \phi_{o(e)}(g) \,\text{ for all edge } e\textnormal{ and } g\in G_e;$
				\item $e= 1 \text{ if } e \in E(T).$
		\end{itemize}}	
	\end{definitiona}
	
	\begin{definitiona}\it \textbf{Bass-Serre tree of a graph groups}(see \textup{\cite[Section 5.3, Section 5.4]{Serre}})\textbf{:}
		{\it Let $(\mathcal{G},Y)$ be a graph of groups defined above and $G$ be its fundamental group. The Bass-Serre tree is the tree ${\mathcal{T}}$ with vertex set $\bigsqcup_{v \in V(Y)}{G/G_v}$ and edge set $\sqcup_{e \in E(Y)}{G/{G^{e}_e}}$. Here, $G_{e}^e = \phi_{t(e)}(G_e)< G_{t(e)}$. 
			
			So, for an edge $gG^{e}_e$, $o(gG^{e}_e)= gG_{o(e)}$ and $t(gG^{e}_e)= geG_{t(e)}$.}
	\end{definitiona}
	
	Now we give a construction of trees of relatively hyperbolic metric spaces associated to a graph of relatively hyperbolic groups.
	
	\textbf{Trees of relatively hyperbolic metric spaces from a graph of relatively hyperbolic graph of groups} (see \textup{\cite{MjPal}, \cite{MjR},  \cite{PS}})\textbf{:}

	{\it Let $Y$ be a finite graph and $(\mathcal{G},Y)$ be a graph of relatively hyperbolic groups. Let $T$ be a maximal subtree of $Y$ and $G = \pi_1(\mathcal{G},Y,T)$ be the fundamental group of $(\mathcal{G},Y)$. For each $v \in V(Y)$, let $G_v$ be the vertex group hyperbolic relative to $\mathcal{H}_v = \{H_{v,\alpha}\}_{\alpha}$ and for each $e \in E(Y)$, let $G_e$ be the edge group hyperbolic relative to $\mathcal{H}_e = \{H_{e,\alpha}\}_{\alpha}$. For $v \in V(Y)$, we fix the generating set of $G_v$ to be $S_v$ and $e \in E(Y)$, we fix the generating set of $G_e$ to be $S_e$ satisfying $\phi_{t(e)}(S_e) \subset S_{t(e)}$. Then
		$S = \bigcup_{v \in V(Y)}S_{v} \bigcup (E(Y) \setminus E(T))$ is a generating set of $G$. Let $\Gamma(G,S)$ denote the Cayley graph of $G$ with respect to $S$.  	
		
		A tree of relatively hyperbolic metric spaces $X$ for $(\mathcal{G},Y)$ is a metric space admitting a map $p: X \to \mathcal{T}$ and satisfying the following:	
		\begin{enumerate}
			\item For every vertex $\tilde{v} = gG_v \in V(\mathcal{T})$, $X_{\tilde{v}} = p^{-1}(\tilde{v})$ is a subgraph of $\Gamma(G,S)$ with $V(X_{\tilde{v}}) = gG_v$ and $gx, gy \in X_{\tilde{v}}$ are connected by an edge if $x^{-1}y \in S_v$. With the induced path metric $d_{\tilde{v}}$, $X_{\tilde{v}}$ is a geodesic metric space hyperbolic relative to $\mathcal{H}_{\tilde{v}} = \{ gg_{v,\alpha}H_{v,\alpha} | \, g_{v,\alpha}H_{v,\alpha} \, \text{is a left coset of } H_{v,\alpha} \, \text{in } G_v \}$.   
			
			\item For every edge $\tilde{e} = gG_{e}^{e} \in E(\mathcal{T})$, $X_{\tilde{e}} = p^{-1}(\tilde{e})$ is a subgraph of $\Gamma(G,S)$ with $V(X_{\tilde{e}}) = geG_{e}^{e}$ and $gex, gey \in X_{\tilde{e}}$ are connected by an edge if $x^{-1}y \in \phi_{t(e)}(S_e)$. With the induced path metric $d_{\tilde{e}}$, $X_{\tilde{e}}$ is a geodesic metric space hyperbolic relative to $\mathcal{H}_{\tilde{e}} = \{ gg_{e,\alpha}H_{e,\alpha} | \, g_{e,\alpha}H_{e,\alpha} \, \text{is a left coset of } H_{e,\alpha} \, \text{in } G_e \}$. 
			
			\item For an edge $\tilde{e} = gG_{e}^{e}$ connecting vertices $\tilde{u} = gG_{o(e)}$ and $\tilde{v} = geG_{t(e)}$, if $x \in G_{e}^{e}$, we join $gex \in X_{\tilde{e}}$ to $gexe^{-1} \in X_{\tilde{u}}$ and $gex \in X_{\tilde{v}}$ by edges of length $\frac{1}{2}$. 
			These extra edges give us maps $f_{\tilde{e},\tilde{u}} : X_{\tilde{e}} \to X_{\tilde{u}}$ and $f_{\tilde{e},\tilde{v}} : X_{\tilde{e}} \to X_{\tilde{v}}$ with $f_{\tilde{e},\tilde{u}}(gex) = gexe^{-1}$ and $f_{\tilde{e},\tilde{v}}(gex) = gex$.
			
			\item There exists a $\delta > 0$ such that $\mathcal{E}(X_{\tilde{v}},\mathcal{H}_{v})$ and $\mathcal{E}(X_{\tilde{e}},\mathcal{H}_{e})$ are $\delta$-hyperbolic metric spaces.
			
	\end{enumerate}}
	
	A tree of relatively hyperbolic metric spaces $p: X \to \mathcal{T}$ satisfies qi-embedded condition if 
	the maps $f_{\tilde{e},\tilde{u}} : X_{\tilde{e}} \to X_{\tilde{u}}$ and $f_{\tilde{e},\tilde{v}} : X_{\tilde{e}} \to X_{\tilde{v}}$ are qi-embeddings. Further, strictly type-preserving is satisfied if $f_{\tilde{e},\tilde{v}}^{-1}(H_{\tilde{v},\alpha})$ is either some $H_{\tilde{e},\beta} \in \mathcal{H}_{\tilde{e}}$ or empty and for every $H_{\tilde{e},\alpha} \in \mathcal{H}_{\tilde{e}}$, there exists $v$ and $H_{\tilde{v},\beta}$ such that $f_{\tilde{e},\tilde{v}}(H_{\tilde{e},\alpha}) \subset H_{\tilde{v},\beta}$.  
	
	For a tree of relatively hyperbolic metric spaces with vetrex spaces $X_{\tilde{v}}$ and edge spaces $X_{\tilde{e}}$, we can associate a tree of coned-off metric spaces with vertex spaces $\mathcal{E}(X_{\tilde{v}},\mathcal{H}_{v})$ and edge spaces $\mathcal{E}(X_{\tilde{e}},\mathcal{H}_{e})$. This is called the \textbf{induced tree of coned-off spaces}. We denote it by $\mathcal{TC}(X)$. The maps $f_{\tilde{e},\tilde{u}} : X_{\tilde{e}} \to X_{\tilde{u}}$ and $f_{\tilde{e},\tilde{v}} : X_{\tilde{e}} \to X_{\tilde{v}}$ induce $\hat{f}_{\tilde{e},\tilde{u}} : \mathcal{E}(X_{\tilde{e}},\mathcal{H}_{\tilde{e}}) \to \mathcal{E}(X_{\tilde{u}},\mathcal{H}_{\tilde{u}})$ and $\hat{f}_{\tilde{e},\tilde{v}} : \mathcal{E}(X_{\tilde{e}},\mathcal{H}_{\tilde{e}}) \to \mathcal{E}(X_{\tilde{v}},\mathcal{H}_{\tilde{v}})$. If these induced maps are qi-embeddings, then this tree of spaces satisfies qi-preserving electrocution condition.
	
	Now we recall the following from \cite{PS}: Let $v_0 \in V(Y)$ be fixed. Then, $G_{v_0} \in V(\mathcal{T})$. Let $x_0 \in X_{v_0}$ denote the identity element of $G_{v_0}$. By Milnor-Schwarz lemma, the orbit map $\Theta: G \to X$ given by $g \mapsto gx_0$ is a quasi-isometry.
	
	\begin{remark}~
		\begin{enumerate}
			\item There exists a constant $D_0$ such that for every vertex space $gG_v \subset X$, $\textup{Hd}(\Theta(gG_{v}),gG_{v}) \leq D_0$ (cf.\cite[Lemma 3.5]{PS}). For any $gg' \in gG_v$, $\Theta(gg') = gg'x_0$. Let $x$ denote the identity element in $G_v$. Suppose $\gamma_v$ be a geodesic joining $x_0$ to $x$ in $X$. Then $gg'\gamma_{v}$ is a path joining $gg'x_0$ to $gg'x$ in $X$, for every $g' \in G_v$. We choose $D_0 = \mathrm{max}\{l(\gamma_v) \,| \, v \in V(Y)\}$. 
			
			\item Let $\tilde{v}=gG_v \in V(\mathcal{T})$. $\Theta$ induces a quasi-isometry $\Theta_{g,v}: gG_v \to X_{\tilde{v}}$. For each $x \in gG_v$, we map $x$ to $y \in X_{\tilde{v}}$ such that $d_{X}(\Theta(x),y) \leq D_0$. This map is coarsely well-defined. $\Theta$ induces a quasi-isometry $\Theta^h : G^h \to X^h$ and $\Theta_{g,v}$ induces a quasi-isometry $\Theta_{g,v}^h : gG_{v}^h \to X_{\tilde{v}}^h$.
		\end{enumerate}
	\end{remark} 
	
	\begin{definitiona}\it \textbf{Cone locus:}
		{\it The cone locus of $\mathcal{TC}(X)$ is defined as a graph with the vertex set consisting of cone points in the vertex spaces, $\{c_v\,|\, v \in V(\mathcal{T})\}$ and the edge set consists of the cone points in the edge spaces, $\{c_e\,|\, e \in E(\mathcal{T})\}$ . For $u,v \in V(\mathcal{T})$, $c_u$ and $c_v$ are joined by an edge $c_e$, for $e \in E(\mathcal{T})$ if $o(e) = u$, $t(e) = v$ in $\mathcal{T}$, $c_u$, $c_v$ and $c_e$ are cone vertices attached to horosphere-like sets $H_u$ in $\widehat{X}_u$, $H_v$ in $\widehat{X}_v$ and $H_e$ in $\widehat{X}_e$ respectively, and $f_{e,u}(H_{e}) \subset H_{u}$ and $f_{e,v}(H_{e}) \subset H_{v}$. Then the edge $c_{e} \times [0,1]$ joins $c_{u}$ and $c_{v}$ by identifying $c_{e} \times \{0\}$ to $c_{u}$ and $c_{e} \times \{1\}$ to $c_{v}$. } 
	\end{definitiona}
	
	It is easy to see that the connected components of a cone locus are trees. Corresponding to each such connected component, we get a tree of horosphere-like subsets in $X$. We denote the collection of such tree of horosphere-like sets by $\mathcal{C} = \{C_\alpha\}$, where $C_{\alpha}$'s are the tree of horosphere-like sets.
	
	Denote by $X^h$, the quotient space $\mathcal{G}(X,\mathcal{C})$ obtained by attaching hyperbolic cones $C_{\alpha}^h$ to $C_{\alpha} \in \mathcal{C}$ by identifying $(x,0)$ to $x$ for all $x \in C_{\alpha}$. By \thmref{thm3}, $\mathcal{G}(X,\mathcal{C})$ is a $\delta$-hyperbolic metric space for some $\delta > 0$.
	
	Recall from \cite{MjPal} that the inclusion $i_v: (X_v,\mathcal{H}_v) \to (X,\mathcal{C})$ induces a uniform proper embedding $\hat{i}_v: \widehat{X}_v \to \mathcal{TC}(X)$, i.e., for every $M > 0$, there exists $N > 0$ such that for any vertex $v \in V(\mathcal{T})$ and $x,y \in \widehat{X}_v$, $d_{\mathcal{TC}(X)}(\hat{i}_{v}(x),\hat{i}_{v}(y)) \leq M$ implies that $d_{\widehat{X}_{v}}(x,y) \leq N$.
	
	Suppose for every $v \in V(\mathcal{T})$, the inclusion map $(X_{v},\mathcal{H}_v) \to (X,\mathcal{C})$ is a proper embedding, then the induced map $i_{v} : X_{v}^h \to X^h$ is also a proper embedding.
	
	\begin{lemma}\label{lem3}\textup{\cite[Lemma 1.20]{MjPal},\cite[Lemma 2.11]{MjR}}
		Given $k, \epsilon \geq 0$, there exists $K>0$ such that if $\alpha$ and $\beta$ denote respectively a $(k,\epsilon)$- quasigeodesic in $\mathcal{TC}(X)$ and a $(k,\epsilon)$- quasigeodesic in $X^h$ joining $a$ and $b$, then $\beta \cap X$ lies in a $K$-neighbourhood of (any representative of) $\alpha$ in $(X,d)$. Here, $d$ denotes the original metric on $X$.
	\end{lemma}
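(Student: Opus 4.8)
The plan is to push both $\alpha$ and $\beta$ into the single genuinely hyperbolic space $X^h = \mathcal{G}(X,\mathcal{C})$, invoke stability of quasigeodesics there, and then translate the resulting $X^h$-proximity back into the original metric $d$ by means of the proper embedding of \lemref{lem2}. Throughout, all constants will depend only on $k,\epsilon,\delta$ and the fixed separation and quasi-isometry data of the tree of spaces, so the uniformity demanded by the statement is automatic.

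First I would give $\alpha$ a representative inside $X^h$. We may assume $\alpha$ is an electric quasigeodesic without backtracking (otherwise replace it by a nearby electric geodesic $\hat\alpha$ joining $a,b$, which exists by $\delta$-hyperbolicity of $\mathcal{TC}(X)$). Let $\alpha_{ea}$ be its electro-ambient representative: across each cone $C_\gamma \in \mathcal{C}$ that $\alpha$ penetrates, with entry and exit points $e_1,e_2 \in C_\gamma \cap \alpha$, join $(e_1,0)$ to $(e_2,0)$ by a geodesic of the hyperbolic cone $C_\gamma^h$. By the terminology recalled above, $\alpha_{ea}$ is a $(k_1,\epsilon_1)$-quasigeodesic in $X^h$. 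Since $\beta$ is already a $(k,\epsilon)$-quasigeodesic in $X^h$ and both join $a$ to $b$, stability of quasigeodesics in the $\delta$-hyperbolic space $X^h$ yields a constant $K_2$ with $\textup{Hd}_{X^h}(\alpha_{ea},\beta)\le K_2$.

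The heart of the proof is the conversion from $d_{X^h}$ to $d$. Fix $p \in \beta \cap X$ and pick $q \in \alpha_{ea}$ with $d_{X^h}(p,q)\le K_2$. If $q$ lies in $X$—a subsegment of $\alpha_{ea}$ outside the cones, hence on a representative of $\alpha$ in $X$—then $p,q\in X$ with $d_{X^h}(p,q)\le K_2$, so \lemref{lem2} gives $d(p,q)\le N(K_2)$. Otherwise $q$ lies in the interior of some cone $C_\gamma^h$ at depth $t$; since the depth coordinate is $1$-Lipschitz along paths and $p$ sits at depth $0$, we get $t\le d_{X^h}(q,p)\le K_2$. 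Writing $q=(x,t)$ with $x\in C_\gamma\subset X$, the vertical segment gives $d_{X^h}(q,x)=t\le K_2$, hence $d_{X^h}(p,x)\le 2K_2$ and $d(p,x)\le N(2K_2)$ by \lemref{lem2}. To place $x$ near $\alpha$, note that the cone portion of $\alpha_{ea}$ inside $C_\gamma^h$ is a geodesic from $(e_1,0)$ to $(e_2,0)$, modelled—exactly as in the computation in the proof of \lemref{lem4}—by a vertical segment up from $e_1$, a horizontal segment at depth $\ln d_{C_\gamma}(e_1,e_2)$, and a vertical segment back down to $e_2$; a point of it at bounded depth $t\le K_2$ therefore has base coordinate within $C_\gamma$-distance $c(K_2)$ of $e_1$ or of $e_2$. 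As $d\le d_{C_\gamma}$ on $C_\gamma$ and $e_1,e_2\in\alpha$, this yields $d(x,\alpha)\le c(K_2)$, whence $d(p,\alpha)\le N(2K_2)+c(K_2)$; enlarging this to also dominate $N(K_2)$ produces the desired $K$.

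The step I expect to be the main obstacle is exactly this last conversion: smallness in the electric (or $X^h$) metric does not control $d$ on its own, so one must prevent the companion point $q$ on $\alpha_{ea}$ from escaping deep into a cone. The leverage is that $p$ is anchored to the base $X$, which forces the cone-depth of $q$ to be bounded and lets the hyperbolic-cone geometry—already exploited in the proof of \lemref{lem4}—relocate $q$ to a base point adjacent to an honest point of $\alpha$, after which \lemref{lem2} closes the gap. One should also verify at the outset that the stability, electro-ambient, and proper-embedding constants are genuinely uniform, which they are since each depends only on $k,\epsilon,\delta$ and the fixed data of the tree of spaces.
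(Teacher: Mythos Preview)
The paper does not supply its own proof of this lemma: it is quoted verbatim as \cite[Lemma 1.20]{MjPal} and \cite[Lemma 2.11]{MjR}, so there is nothing in the present paper to compare your argument against. Your outline is in fact the standard route used in those references---pass to the electro-ambient representative to land in the genuinely hyperbolic space $X^h$, invoke stability of quasigeodesics there, and then upgrade $d_{X^h}$-closeness to $d$-closeness via the proper embedding of \lemref{lem2} together with the bounded-depth argument inside cones (exactly the computation already carried out in the proof of \lemref{lem4}).

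Two places deserve a bit more care. First, $\alpha$ lives in $\mathcal{TC}(X)$, while the electro-ambient construction in the paper's terminology section is phrased for quasigeodesics in $\widehat{X}=\mathcal{E}(X,\mathcal{C})$; you should make explicit the (distance-nonincreasing) collapse $\mathcal{TC}(X)\to\mathcal{E}(X,\mathcal{C})$ that sends each connected component of the cone locus to a single cone point, and check that your replacement $\hat\alpha$ remains a uniform quasigeodesic after this passage. Second, your claim that ``the base coordinate of $q$ lies within $C_\gamma$-distance $c(K_2)$ of $e_1$ or $e_2$'' is not quite what the model-path comparison gives: stability only controls $d_{X^h}(q,q')$ for some $q'$ on the up--across--down path, not the base coordinate directly. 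The clean fix is to bound $d_{X^h}(q,e_i)$ instead (in both the large-$T$ and small-$T$ cases, exactly as in \lemref{lem4}), and then apply \lemref{lem2} straight to the pair $p,e_i\in X$, bypassing the intermediate point $x$ entirely. With these adjustments the argument is complete.
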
	
	
	\section{Cannon-Thurston maps for tree of relatively hyperbolic spaces}\label{sec5}
	
	\subsection{Ladder construction of  \cite{MjPal}}
	
	Recall that for any edge $e \in V(\mathcal{T})$ joining vertices $u$ and $v$, the maps $f_{e,u}:X_{e} \to X_{u}$  and $f_{e,v}:X_{e} \to X_{v}$ are qi-embeddings. These induce qi-embeddings $f^{h}_{e,u}:X^{h}_{e} \to X^{h}_{u}$  and $f^{h}_{e,v}:X^{h}_{e} \to X^{h}_{v}$ respectively. Let $C_2>0$ such that $f^{h}_{e,u}(X^{h}_{e})$ and $f^{h}_{e,v}(X^{h}_{e})$ are $C_2$-quasiconvex subset of $X^{h}_u$ and $X^{h}_{v}$ respectively. Let $C = C_1+C_2$, with $C_1$ from \lemref{lem7}. Let $D$ be the constant from \lemref{lem7}.
	Further, $f_{e,u}$ and $f_{e,v}$ give a partially defined map from ${X}_{u}$ to ${X}_{v}$ with the domain restricted to ${f}_{e,u}({X}_{e})$. However, we denote the map simply by ${\phi}_{u,v}: {X}_{u} \to {X}_{v}$, i.e., ${\phi}_{u,v}({f}_{e,u}(x)) = {f}_{e,v}(x)$. 
	
	We construct the ladder for geodesic rays. Recall that $p: \mathcal{TC}(X) \to \mathcal{T}$ is an  induced tree of coned-off metric spaces. Fix the vertex $v_0$ as the base point. Let $v \neq v_0$ be a vertex of $ \mathcal{T}$.
	
	\hspace{6mm} Let $\hat{\alpha}_v \subset \widehat{X}_v$ be a geodesic ray starting at a point outside the horosphere-like sets. Let $\alpha_v$ be the corresponding electro-ambient quasigeodesic ray. Consider the set of all edges incident on $v$ except for the edge lying in the geodesic joining $v_0$ to $v$ in $\mathcal{T}$. Among them, choose the collection of all edges $\{e_k\}_{k \in I}$ such that diameter of the subset $N_{C}^h(\alpha_v) \cap f_{e_k,v}(X_{e_k})$ is greater than $D$. Suppose each $e_k$ joins $v$ to $v_k \in V(\mathcal{T})$. For each $k \in I$, let $p_k$ be a nearest point projection of ${\alpha}_{v}(0)$ in $N_{C}^h(\alpha_v) \cap f_{e_k,v}(X_{e_k})$ and let $\hat{\mu}_k$ be an electric geodesic in $\widehat{X}_v$ starting at $p_k$ such that, for its electro-ambient quasigeodesic $\mu$, we have $\mu(\infty) = \alpha_{v}(\infty)$ in $\partial{X}^{h}_{v}$. Let $\widehat{\Phi}(\hat{\mu}_{k})$ denote the electric geodesic ray in  $\widehat{X}_{v_k}$, starting at $\phi_{v,v_k}(p_k)$ such that its electro-ambient quasigeodesic ray denoted by ${\Phi}(\hat{\mu}_{k})$ and the quasigeodesic ray $\phi_{v,v_k}^{h}(\mu_k)$ are asymptotic to the same point in $\partial{X}^{h}_{v_k}$. Define 
	
	$B_{1}(\hat{\alpha}) = \hat{i}_v(\hat{\alpha}) \cup \bigcup_{k} \widehat{\Phi}(\hat{\mu}_{k})$. 
	
	Now, suppose we have constructed $B_{m}(\hat{\alpha})$. Let $w_k \in p(B_{m}(\hat{\alpha})) \setminus p(B_{m-1}(\hat{\alpha}))$ and let $\hat{i}_{w_k}(\hat{\alpha_{k}}) = p^{-1}(w_k) \cap B_{m}(\hat{\alpha})$, where $\hat{\alpha}_{k}$ is a geodesic ray in $\widehat{X}_{w_k}$. So $B_{m+1}(\hat{\alpha}) = B_{m}(\hat{\alpha}) \cup \bigcup_{k}B_{1}(\hat{\alpha}_{k})$.
	The ladder $B_{\hat{\alpha}} = \cup_{m \geq 1} B_{m}(\hat{\alpha}).$
	
	Convex hull of $p(B_{\hat{\alpha}})$ is a subtree of $\mathcal{T}$ and we denote it by $\mathcal{T}_1$.
	
	\subsubsection{\textbf{Retraction map}}
	
	\begin{definitiona}\it \textbf{Retraction map:}
		{\it For each $v \in V(\mathcal{T}_1)$, let $\hat{\pi}_{\hat{\alpha}_{v}} : \widehat{X}_v \to \hat{\alpha}_v$ be the electric projection of $\widehat{X}_v$ onto $\hat{\alpha}_v$.
			
			The retraction map $\widehat{\Pi}_{\hat{\alpha}} : \mathcal{TC}(X) \to B_{\hat{\alpha}}$ is defined by:
			\[\widehat{\Pi}_{\hat{\alpha}}(x) = \hat{i}_v(\hat{\pi}_{\hat{\alpha}}(x)) \quad \text{ if } x \in \widehat{X}_v \text{ for } v \in V(\mathcal{T}_1).\]
			If $x \in p^{-1}(V(\mathcal{T})\setminus V(\mathcal{T}_1))$, we choose $x_1 \in p^{-1}(V(\mathcal{T}_1))$ such that $d(x,x_1) = d(x,p^{-1}(V(\mathcal{T}_1)))$. 
			
			Then, $\widehat{\Pi}_{\hat{\alpha}}(x) = \widehat{\Pi}_{\hat{\alpha}}(x_1)$.}
	\end{definitiona}
	
	
	\begin{lemma}\label{lem9}\textup{\cite[Lemma 3.3]{Minsky}}
		Let $Y$ be a $\delta$-hyperbolic geodesic metric space and $Z \subset Y$ a subset admitting a map $\Pi : Y \to Z$ such that there exists $C> 0$ satisfying:
		\begin{itemize}
			\item If $d(x,y) \leq 1$, then $d(\Pi(x),\Pi(y)) \leq C$;
			\item If $y \in Z$, then $d(y,\Pi(y)) \leq C$.
		\end{itemize}
		
		Then $Z$ is quasiconvex, and furthermore if $\gamma$ is a geodesic in $Y$ whose endpoints are within a distance $a$ of $Z$, then $d(x,\Pi(x)) \leq b$ for some $b = b(a,\delta, C)$ and every $x \in \gamma$. 
	\end{lemma}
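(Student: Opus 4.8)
The plan is to bootstrap the two local conditions on $\Pi$ into global coarse-Lipschitz control, then to reduce the displacement estimate (quasiconvexity being its $a=0$ instance) to the case where the endpoints of $\gamma$ actually lie in $Z$, and finally to settle that case by an extremal argument played against slimness of polygons. First I would prove that $\Pi$ is coarsely Lipschitz: for $x,y\in Y$, cut a geodesic $[x,y]$ into $\lceil d(x,y)\rceil$ subsegments of length at most $1$ and iterate the first bullet to get $d(\Pi x,\Pi y)\le C\, d(x,y)+C$. Combined with the second bullet this gives, for any $x\in Y$ and $z\in Z$, the bound $d(x,\Pi x)\le d(x,z)+d(z,\Pi z)+d(\Pi z,\Pi x)\le (1+C)d(x,z)+2C$, so that $d(x,Z)\le d(x,\Pi x)\le (1+C)\,d(x,Z)+2C$. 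Thus $d(\,\cdot\,,\Pi\,\cdot\,)$ and $d(\,\cdot\,,Z)$ are coarsely equivalent, and it suffices to bound $d(x,Z)$ for $x$ on a geodesic whose endpoints are near $Z$.

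Next I would reduce to endpoints lying in $Z$. If $\gamma=[p,q]$ has $d(p,Z),d(q,Z)\le a$, then $p':=\Pi p$ and $q':=\Pi q$ lie in $Z$ and are within $A:=(1+C)a+2C$ of $p,q$ by the previous step; two applications of thin triangles give $\textup{Hd}(\gamma,[p',q'])\le A+C'(\delta)$. Hence, granting a bound $d(x',\Pi x')\le b_0(\delta,C)$ for every point $x'$ on a geodesic joining two points of $Z$, for $x\in\gamma$ one picks $x'\in[p',q']$ with $d(x,x')\le A+C'(\delta)$ and concludes $d(x,\Pi x)\le (1+C)(A+C'(\delta))+b_0+C=:b(a,\delta,C)$. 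Taking $a=0$ this same reduction shows that a geodesic between two points of $Z$ lies in $N_{b_0}(Z)$, i.e.\ $Z$ is quasiconvex, so both assertions collapse to a single claim.

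That claim is the crux. Let $z_1,z_2\in Z$, $\gamma=[z_1,z_2]$, and $M:=\sup_{x\in\gamma}d(x,Z)$, realized (on a finite geodesic) at a point $x_0$, with $z_0:=\Pi x_0\in Z$; note $M\le d(x_0,z_0)\le(1+C)M+2C$, and since $z_1,z_2\in Z$ the point $x_0$ is at distance $\ge M$ from both endpoints, so there is room to insert auxiliary points $y_{-},y_{+}$ on $\gamma$ on either side of $x_0$. The idea is to place $y_{\pm}$ at a distance from $x_0$ tuned to $C$ (of order $M/C$) and to confront the slimness of the geodesic triangle on $y_{-},y_{+},z_0$ — which forces $x_0$ to lie within $O(\delta)$ of $[y_{-},z_0]\cup[z_0,y_{+}]$ — with the coarse-Lipschitz control, which pins $\Pi y_{\pm}$ within $O(M)$ of $z_0$; the two facts are compatible only when $M\le M_0(\delta,C)$, and then $b_0:=(1+C)M_0+2C$ finishes the previous step.

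The main obstacle is precisely this last argument: the bare thin-triangle estimate is consistent with arbitrarily large $M$ (it yields only a vacuous inequality), so the retraction property of $\Pi$ has to be injected into the configuration at exactly the right spot to produce a genuine contradiction; alternatively one can argue by contradiction, running a sequence of such configurations with $M\to\infty$ and violating the defining inequalities of $\Pi$ in the limit. The first two steps are routine manipulations with geodesics and thin triangles.
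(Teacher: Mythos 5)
Your first two steps are fine: subdividing a geodesic into unit pieces gives the coarse Lipschitz bound $d(\Pi x,\Pi y)\le Cd(x,y)+C$, hence $d(x,\Pi x)\le (1+C)d(x,Z)+2C$, and the reduction to the case of endpoints in $Z$ via stability/thin triangles is routine. But the crux --- bounding $M:=\sup_{x\in\gamma}d(x,Z)$ for a geodesic $\gamma$ with endpoints in $Z$ --- is not actually proved, and you concede as much: the configuration you describe (the triangle on $y_-,y_+,z_0=\Pi x_0$, with $y_\pm$ at distance of order $M/C$ from $x_0$) produces no contradiction. Slimness puts $x_0$ within $\delta$ of $[y_-,z_0]\cup[z_0,y_+]$, and coarse Lipschitzness puts $\Pi y_\pm$ within roughly $M+C$ of $z_0$; both statements are perfectly consistent with $M$ arbitrarily large, as you yourself note. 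The fallback you offer --- a sequence of configurations with $M\to\infty$ ``violating the defining inequalities of $\Pi$ in the limit'' --- is not an argument either: the space need not be proper, there is no limit object, and nothing in the hypotheses is contradicted pointwise by large $M$.

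The missing ingredient is the exponential divergence of geodesics in a $\delta$-hyperbolic space (equivalently, the standard estimate that any rectifiable path $\sigma$ joining the endpoints of a geodesic satisfies $d(x,\sigma)\le \delta\,|\log_2 \ell(\sigma)|+O(1)$ for every $x$ on the geodesic). This is exactly how the cited source (Minsky, and the same argument in Mitra's tree-of-spaces paper) closes the loop, and the paper itself quotes the lemma from there without reproving it. Concretely: let $x_0\in\gamma$ realize $M=d(x_0,Z)$, take $y_\pm\in\gamma$ at distance about $2M$ (not $M/C$) on either side of $x_0$, and pick $w_\pm\in Z$ with $d(y_\pm,w_\pm)\le M$ (possible since $M$ is the supremum over $\gamma$, after the reduction to endpoints in $Z$). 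Now build the path $\sigma=[y_-,w_-]\ast\widetilde{\Pi([w_-,w_+])}\ast[w_+,y_+]$, where the middle piece is the $\Pi$-image of a geodesic $[w_-,w_+]$ with consecutive images (at distance $\le C$) joined by geodesics. Then $\ell(\sigma)\le c_1M+c_2$ with $c_1,c_2$ depending only on $C$, while every point of $\sigma$ stays at distance at least $M-C$ from $x_0$ (points of the middle piece lie within $C/2$ of $Z$ and $d(x_0,Z)=M$; points of the end pieces are within $M$ of $y_\pm$, which are $2M$ from $x_0$). Since $\sigma$ joins the endpoints of the geodesic segment $[y_-,y_+]\ni x_0$, the divergence estimate gives $M-C\le \delta\log_2(c_1M+c_2)+O(1)$, forcing $M\le M_0(\delta,C)$; your steps 1--2 then convert this into the stated bound $b(a,\delta,C)$ and quasiconvexity. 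Without this path-versus-ball estimate (or an equivalent), the proof is incomplete at its central point.
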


	\begin{theorem}\label{thm4}
		If $\mathcal{TC}(X)$ is hyperbolic, then $B_{\hat{\alpha}}$ is uniformly quasiconvex (independent of $\hat{\alpha}$).
	\end{theorem}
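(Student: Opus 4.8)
The plan is to apply \lemref{lem9} (Minsky's criterion) with $Y = \mathcal{TC}(X)$, which is hyperbolic by hypothesis, with $Z = B_{\hat{\alpha}}$, and with $\Pi = \widehat{\Pi}_{\hat{\alpha}}$. It then suffices to exhibit a constant $C$, \emph{independent of the geodesic ray} $\hat{\alpha}$, with: (a) if $d_{\mathcal{TC}(X)}(x,y) \le 1$ then $d_{\mathcal{TC}(X)}(\widehat{\Pi}_{\hat{\alpha}}(x), \widehat{\Pi}_{\hat{\alpha}}(y)) \le C$; and (b) $d_{\mathcal{TC}(X)}(y, \widehat{\Pi}_{\hat{\alpha}}(y)) \le C$ for all $y \in B_{\hat{\alpha}}$. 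Condition (b) is the easy one: by construction $B_{\hat{\alpha}} \subset p^{-1}(V(\mathcal{T}_1))$ and $B_{\hat{\alpha}} \cap \widehat{X}_v = \hat{i}_v(\hat{\alpha}_v)$ for each $v \in V(\mathcal{T}_1)$, so any $y \in B_{\hat{\alpha}} \cap \widehat{X}_v$ equals $\hat{i}_v(y')$ with $y'$ on the electric geodesic ray $\hat{\alpha}_v$; since the electric projection moves a point of $\hat{\alpha}_v$ by at most a constant depending only on $\delta$ and on the constant $P$ of \lemref{lem11}, and $\hat{i}_v$ is $1$-Lipschitz, (b) holds uniformly.

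For (a) I would argue by cases according to how $x,y$ sit over $p : \mathcal{TC}(X) \to \mathcal{T}$; since every point of an edge space is within distance $1$ of a vertex space, it is enough to treat $x,y$ in vertex spaces. \textbf{Intra-space case.} If $x,y$ lie in a common vertex space $\widehat{X}_v$, then $d_{\mathcal{TC}(X)}(x,y)\le 1$ forces $d_{\widehat{X}_v}(x,y)$ to be bounded by a universal constant, because any detour through a neighbouring space costs at least $1$. So we are reduced to the fact that $\hat{\pi}_{\hat{\alpha}_v}$ --- nearest-point projection onto the electro-ambient quasigeodesic $\alpha_v$, which is uniformly quasiconvex in the $\delta$-hyperbolic space $\widehat{X}_v$ --- is coarsely Lipschitz with constants depending only on $\delta$ and $P$; composing with the $1$-Lipschitz inclusion $\hat{i}_v$ closes this case.

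\textbf{Inter-space case (the main obstacle).} Suppose $x \in \widehat{X}_u$ and $y \in \widehat{X}_v$ with $u \neq v$; then $u,v$ are joined by an edge $e$ of $\mathcal{T}$ and, after absorbing universal constants, $x = f_{e,u}(z)$ and $y = f_{e,v}(z)$, i.e. $y = \phi_{u,v}(x)$. Now $p(B_{\hat{\alpha}})$ is a subtree of $\mathcal{T}$ grown from $v_0$ only along the edges $e_k$ selected in the ladder construction (those with $\mathrm{diam}(N^h_C(\alpha_w)\cap f_{e_k,w}(X_{e_k})) > D$). Hence if neither of $u,v$ was reached from the other along such a selected edge, then one of them, say $v$, is not in $V(\mathcal{T}_1)$, the nearest point of $p^{-1}(V(\mathcal{T}_1))$ to $y$ lies in $\widehat{X}_u$ within distance $1$, and the definition of $\widehat{\Pi}_{\hat{\alpha}}$ gives $\widehat{\Pi}_{\hat{\alpha}}(y) = \widehat{\Pi}_{\hat{\alpha}}(x)$ --- nothing to prove. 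Otherwise, say $v$ was reached from $u$ along a selected edge $e = e_k$, so $\hat{\alpha}_v = \widehat{\Phi}(\hat{\mu}_k)$ starts at $\phi_{u,v}(p_k)$, where $p_k$ is a nearest-point projection of $\hat{\alpha}_u(0)$ onto $N^h_C(\alpha_u)\cap f_{e,u}(X_e)$ and $\mu_k$ is the ray of $\widehat{X}_u$ from $p_k$ asymptotic to $\hat{\alpha}_u(\infty)$. Here I would invoke \lemref{lem7} --- whose constants $D,C_1$ are precisely the thresholds built into the ladder via $C = C_1 + C_2$ and the diameter condition: the concatenation of a geodesic from $\hat{\alpha}_u(0)$ to $x$ with a subsegment of $f_{e,u}(X_e)$ fellow-travels $\alpha_u$, which confines $\hat{\pi}_{\hat{\alpha}_u}(x)$ to a uniform neighbourhood of $N^h_C(\alpha_u)\cap f_{e,u}(X_e)$, hence of the initial part of $\mu_k$. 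Pushing this across the uniform qi-embedding $\phi_{u,v}$, which carries $\mu_k$ to a quasigeodesic asymptotic to $\hat{\alpha}_v(\infty)$ and therefore uniformly Hausdorff-close to $\hat{\alpha}_v$, and using $\phi_{u,v}(x) = y$, one gets $\hat{i}_u(\hat{\pi}_{\hat{\alpha}_u}(x))$ and $\hat{i}_v(\hat{\pi}_{\hat{\alpha}_v}(y))$ within a uniform distance in $\mathcal{TC}(X)$.

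Assembling the cases yields (a) with $C$ depending only on the hyperbolicity constant of $\mathcal{TC}(X)$, the qi-embedding constants of the edge maps, and $\delta, D, C_1, C_2, P$ --- in particular not on $\hat{\alpha}$ --- and \lemref{lem9} then finishes the proof. I expect the genuinely delicate step to be the last one: quantitatively comparing the electric projection in $\widehat{X}_u$ with the one in $\widehat{X}_v$ across the edge $e$, which is exactly what the choices of $p_k$, $\mu_k$, and $\widehat{\Phi}(\hat{\mu}_k)$ in the construction were engineered to make \lemref{lem7} control.
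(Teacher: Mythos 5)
Your proposal follows essentially the same route as the paper: verify Minsky's criterion (\lemref{lem9}) for $Z = B_{\hat{\alpha}}$ using the retraction $\widehat{\Pi}_{\hat{\alpha}}$, the key input being that $\widehat{\Pi}_{\hat{\alpha}}$ is uniformly coarsely Lipschitz. The paper simply cites the argument of Mj--Pal (Theorem 2.2 of \cite{MjPal}) for that Lipschitz property, whereas you sketch it directly (intra-space projection plus the cross-edge comparison via \lemref{lem7} and the choice of $p_k$, $\mu_k$), which is exactly the content of the cited proof.
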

	
	The retraction map is coarsely Lipschitz, i.e., there exists $C_0>0$ such that,   $d_{\mathcal{TC}(X)}(\widehat{\Pi}_{\hat{\alpha}}(x), \widehat{\Pi}_{\hat{\alpha}}(y)) \leq {C_0}d_{\mathcal{TC}(X)}(x,y) + C_0$ for every $x, y \in \mathcal{TC}(X)$. Proof of this is similar to the proof of \cite[Theorem 2.2]{MjPal}. Then, \thmref{thm4} follows from this result, along with \lemref{lem9}.

	\subsection{Vertical quasigeodesic rays}
	
	Let $\hat{\alpha}_{v}$ be an electric geodesic ray in $\widehat{X}_{v}$ starting at a point outside horospheres. Let $\alpha_v$ be its electro-ambient quasigeodesic. We have the ladder $B_{\hat{\alpha}_{v}} = \bigcup_{u \in V(\mathcal{T}_1)} \hat{i}_{u}(\hat{\alpha}_u)$. Let $B_{\alpha_{v}}^b = \bigcup_{u \in V(\mathcal{T}_1)} \hat{i}_{u}({\alpha}_{u}^b) \subset B_{\hat{\alpha}_{v}}$. For any $x \in B_{\alpha_{v}}^b$, there exists $u \in V(\mathcal{T}_1)$ such that $x \in \alpha_{u}^b$. Let $\sigma = [u_{n},u_{n-1}]\cup\cdots \cup[u_{1},u_{0}]$ be the geodesic in $\mathcal{T}_1$ with $u_0 = v$ and $u_{n} = u$.
	
	\begin{definitiona}\it \textbf{Vertical quasigeodesic ray:}
		{\it A {vertical quasigeodesic ray} starting at $x$ is a map $r_x : \sigma \to B_{\alpha_{v}}^b$ satisfying the following for a constant $C' \geq 0$:
			
			$d_{\sigma}(u,w) \leq d(r_{x}(u),r_{x}(w)) \leq C'd_{\sigma}(u,w)$,
			for all $u,w \in \sigma$.}
	\end{definitiona}
	
	\textbf{Note:} $r_{x}(u_{i}) \in X_{u_i}$ and $r_{x}(u_{n}) = x$.
	
	We end this section with one of the most important results we use.
	
	\begin{theorem}\label{thm}\textup{\cite{MjPal}} For each $v \in V(\mathcal{T})$, CT map exists for the inclusion map 
		$i_v: (X_v,\mathcal{H}_v) \to (X,\mathcal{C})$.
	\end{theorem}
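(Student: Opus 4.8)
The plan is to verify the standard criterion for the existence of a Cannon--Thurston map (see \cite{MjPal}). Since $X^h_v$ and $X^h$ are proper geodesic hyperbolic spaces and, under the standing hypotheses, $i_v : X^h_v \to X^h$ is a proper embedding, it suffices to show: fixing a basepoint $x_0 \in X_v$, for every $M > 0$ there is $N = N(M) > 0$ such that whenever $\lambda$ is a geodesic in $X^h_v$ with $d_{X^h_v}(x_0,\lambda) \geq N$, every geodesic $\beta$ in $X^h$ joining $i_v(\lambda^-)$ to $i_v(\lambda^+)$ avoids the $M$-ball about $i_v(x_0)$. I would establish this by passing to the coned-off spaces, where the ladder machinery applies, and then transferring the estimate back to $X^h$.

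First I would fix an electric geodesic $\hat\lambda$ in $\widehat X_v$ with the same endpoints as $\lambda$, take $\lambda$ to be its electro-ambient quasigeodesic, and construct the ladder $B_{\hat\lambda} \subset \mathcal{TC}(X)$ with its retraction $\widehat\Pi_{\hat\lambda}$ as in \secref{sec5}. Since $\mathcal{TC}(X)$ is hyperbolic, $\widehat\Pi_{\hat\lambda}$ is uniformly coarsely Lipschitz and, by \thmref{thm4}, $B_{\hat\lambda}$ is uniformly quasiconvex; by \lemref{lem9} this yields a uniform $b$ with $d_{\mathcal{TC}(X)}(x,\widehat\Pi_{\hat\lambda}(x)) \leq b$ for every $x$ on a geodesic $\hat\beta$ of $\mathcal{TC}(X)$ joining $\hat i_v(\lambda^\pm)$, so $\hat\beta$ stays $b$-close to $B_{\hat\lambda}$.

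Next I would take a point $p$ on a geodesic $\beta$ in $X^h$ joining $i_v(\lambda^\pm)$ with $d_{X^h}(i_v(x_0),p) \leq M$ and bound $d_{\widehat X_v}(x_0,\hat\lambda)$, and then $d_{X^h_v}(x_0,\lambda)$, in terms of $M$. Decompose $\beta$ into the portions $\beta \cap X$ lying outside the hyperbolic cones $\{C^h_\alpha\}$ and the portions inside them. If $p \in \beta \cap X$, then by \lemref{lem3} it lies within a uniform $K$ (in $d$, hence in $d_{\mathcal{TC}(X)}$) of a point $q$ on a quasigeodesic of $\mathcal{TC}(X)$ joining $\hat i_v(\lambda^\pm)$; since $X^h \to \mathcal{TC}(X)$ is coarsely $1$-Lipschitz and $\hat\beta$ tracks $B_{\hat\lambda}$, this forces $d_{\mathcal{TC}(X)}(\hat i_v(x_0), B_{\hat\lambda})$ to be bounded by a function of $M$. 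Using that $\widehat\Pi_{\hat\lambda}$ restricted to $\widehat X_v$ is $\hat i_v$ followed by electric projection onto $\hat\lambda$, the description of $B_{\hat\lambda}$ by vertical quasigeodesics over $\mathcal{T}_1$, and the fact that $\hat i_v$ is a uniform proper embedding into $\mathcal{TC}(X)$, one then concludes $d_{\widehat X_v}(x_0,\hat\lambda)$ is bounded in terms of $M$. If instead $p$ lies in a cone $C^h_\alpha$, I would replace the vertical/horizontal model of the geodesic inside $C^h_\alpha$ by its quasigeodesic representative and project to the base $C_\alpha \subset X$, exactly as in the proof of \lemref{lem4}, obtaining a point $p' \in \beta \cap X$ at bounded $d$-distance from the base of $C_\alpha$ with $d_{X^h}(i_v(x_0),p')$ bounded in terms of $M$; this reduces to the previous case. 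Finally \lemref{lem2} and the bounded region penetration property promote the bound on $d_{\widehat X_v}(x_0,\hat\lambda)$ to one on $d_{X^h_v}(x_0,\lambda)$, and choosing $N(M)$ larger than the latter completes the verification.

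The hard part will be the step that converts ``a point of $B_{\hat\lambda}$ lying in a vertex space $\widehat X_u$ with $u \neq v$ is close to $\hat i_v(x_0)$'' into ``$\hat\lambda$ comes close to $x_0$ in $\widehat X_v$'': this is exactly where the quantitative content of the \cite{MjPal} ladder construction is needed, namely controlling, through the vertical quasigeodesics and the flaring of the ladder along the Bass--Serre tree, how the electro-ambient geodesics $\hat\alpha_u$ in the higher vertex spaces sit relative to $\hat\alpha_v$. The remaining work is bookkeeping among the three metrics $d$, $d_{\widehat X_v}$, $d_{X^h_v}$ and their ambient counterparts.
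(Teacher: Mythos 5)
Your overall route is the right one, and it is in fact the route of the source: the paper does not prove Theorem \ref{thm} at all, but quotes it from \cite{MjPal}, where it is established exactly as you outline — Mitra's criterion for Cannon--Thurston maps, verified by building the ladder $B_{\hat\lambda}$ in $\mathcal{TC}(X)$, showing the retraction is coarsely Lipschitz so that $B_{\hat\lambda}$ is uniformly quasiconvex (\thmref{thm4}, \lemref{lem9}), and then using the tracking statement \lemref{lem3} together with vertical quasigeodesics to pull information back to the vertex space. So the skeleton matches the cited proof and the machinery recalled in \secref{sec5}.

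There is, however, a genuine misstep in your final transfer. You propose to bound $d_{\widehat X_v}(x_0,\hat\lambda)$ and then ``promote'' this to a bound on $d_{X_v^h}(x_0,\lambda)$ via \lemref{lem2} and bounded penetration. That promotion is false: closeness in the coned-off metric does not imply closeness in $X_v^h$. For instance $\hat\lambda$ may penetrate a horosphere-like set $A$ that passes within distance $1$ of $x_0$, so $d_{\widehat X_v}(x_0,\hat\lambda)\leq 2$, while the electro-ambient representative $\lambda$ crosses $A^h$ between entry and exit points far from $x_0$, making $d_{X_v^h}(x_0,\lambda)$ of order $\log d_A$ and hence arbitrarily large; \lemref{lem2} concerns the inclusion $X\to X^h$ in the original metric and gives nothing in the direction you need. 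The correct bookkeeping (as in \cite{MjPal}, and as mirrored in this paper's proofs of \lemref{lem12} and \lemref{lem13}) never passes through $d_{\widehat X_v}$: \lemref{lem3} places points of $\beta\cap X$ within $K$ of the ladder in the \emph{original} metric $d$, the vertical quasigeodesic rays then produce a point of $\lambda^b\subset X_v$ (i.e.\ outside the horoballs) at $d$-bounded distance from the $M$-ball, and properness of $X_v\hookrightarrow X$ converts this into a bound on $d_{X_v}(x_0,\lambda)$, hence on $d_{X_v^h}(x_0,\lambda)$. Relatedly, your argument says nothing about geodesics $\lambda$ of $X_v^h$ whose endpoints lie deep inside a single horoball: there $\lambda^b$ is essentially empty and the ladder yields no information, so this case needs a separate direct argument (using strict type-preservation, a horoball of $X_v^h$ sits inside a cone $C_\alpha^h$ of $X^h$ and geodesics between deep points stay deep, hence far from $i_v(x_0)$), as is done in \cite{MjPal}. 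With these two repairs the proposal becomes the standard proof.
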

	
	\section{Limit Intersection Theorem}
	
	Let $u$, $v$ be vertices connected by an edge $e$. Recall that $\phi_{u,v} : X_u \to X_v$ is a partially defined qi-embedding. By \lemref{lem5}, we know that the induced map $\phi_{u,v}^h:f_{e,u}^h(X_{e}^h) \to f_{e,v}^h(X_{e}^h)$ is a qi-embedding and it induces the embedding $\partial{\phi}_{u,v}^h:\partial{f_{e,u}^h(\partial{X_{e}^h)}} \to \partial{f_{e,v}^h(\partial{X_{e}^h)}}$ defined by $\partial{\phi}_{u,v}^h(\partial{f}_{e,u}^h(x)) = \partial{f}_{e,v}^h(x)$.
	
	\begin{definitiona}\label{flow}\it \textbf{Flow of a boundary point:}
		{\it Let $\xi \in \partial{X}_{u}^h$ and $\partial{\phi}_{u,v}^h(\xi) = \eta \in \partial{X}_v$. Then we say $\eta$ is a {\em flow} of $\xi$ and that $\xi$ can be flowed into $\partial{X}_{v}^h$.
			
			If $u_0 \neq u_n$ and $u_0, u_1,..., u_n$ is the sequence of consecutive vertices in the geodesic $[u_0,u_n]$ in $\mathcal{T}$ then we say $\xi \in \partial{X}_{u_0}^h$ can be flowed into $\partial{X}_{u_n}^h$ if there exists $\xi_i \in \partial{X}_{u_i}^h$ such that $\xi_0 = \xi$ and $\xi_{i+1} = \partial{\phi}_{u_{i},u_{i+1}}^h(\xi_{i})$ for $0 \leq i \leq n-1$. And $\xi_n$ is called a flow of $\xi$.}
	\end{definitiona}
	
	\begin{lemma}\label{lem17}
		Suppose $\xi_1 \in \partial{X}_{v_1}^h$ can be flowed to $\partial{X}_{v_2}^h$ and let $\xi_{2}$ be the flow. Then $\xi_1$ and $\xi_{2}$ map to the same limit point in $\partial{X}^h$ under the respective CT maps. 
	\end{lemma}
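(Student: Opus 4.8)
The plan is to reduce to the case of a single edge and there produce a pair of rays in $X^h$ that stay a bounded distance apart. A flow from $\partial X^h_{v_1}$ to $\partial X^h_{v_2}$ along the geodesic $[v_1,v_2]\subset\mathcal T$ is by definition a composition of one-step flows $\partial\phi^h_{w_i,w_{i+1}}$ across single edges, so it suffices to prove: if $u,v$ are joined by an edge $e$ and $\eta=\partial\phi^h_{u,v}(\xi)$ for some $\xi\in\partial X^h_u$ lying in the domain of $\partial\phi^h_{u,v}$, then $\partial i_u(\xi)=\partial i_v(\eta)$ in $\partial X^h$; applying this to each edge of $[v_1,v_2]$ then yields the statement. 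By the definition of $\partial\phi^h_{u,v}$ we have $\xi=\partial f^h_{e,u}(\zeta)$ and $\eta=\partial f^h_{e,v}(\zeta)$ for one and the same $\zeta\in\partial X^h_e$ (unique, since the qi-embedding $f^h_{e,u}$ induces a topological embedding on boundaries). If $\partial X^h_e=\emptyset$ there is nothing to prove, so assume otherwise and fix a geodesic ray $r$ in the proper hyperbolic space $X^h_e$ with $r(\infty)=\zeta$.

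Now consider the two composites $\Psi_u:=i_u\circ f^h_{e,u}$ and $\Psi_v:=i_v\circ f^h_{e,v}$, both maps $X^h_e\to X^h$. Each is a composition of a qi-embedding (hence proper embedding) of proper hyperbolic spaces with the proper embedding $i_u$ (resp. $i_v$), so each is a proper embedding admitting a Cannon--Thurston map: $i_u,i_v$ do by \thmref{thm}, the qi-embeddings do automatically, and composing the continuous boundary extensions gives $\overline{\Psi_u}=\overline{i_u}\circ\overline{f^h_{e,u}}$ and $\overline{\Psi_v}=\overline{i_v}\circ\overline{f^h_{e,v}}$, whence $\overline{\Psi_u}(\zeta)=\partial i_u(\xi)$ and $\overline{\Psi_v}(\zeta)=\partial i_v(\eta)$. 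The key claim is that there is a constant $C$ with $d_{X^h}(\Psi_u(x),\Psi_v(x))\le C$ for every $x\in X^h_e$. Granting this, continuity of $\overline{\Psi_u},\overline{\Psi_v}$ gives $\Psi_u(r(t))\to\partial i_u(\xi)$ and $\Psi_v(r(t))\to\partial i_v(\eta)$ in $\overline{X^h}$ as $t\to\infty$, while the uniform bound $d_{X^h}(\Psi_u(r(t)),\Psi_v(r(t)))\le C$ forces the two limit points in $\partial X^h$ to coincide; hence $\partial i_u(\xi)=\partial i_v(\eta)$, and summing over the edges of $[v_1,v_2]$ gives $\partial i_{v_1}(\xi_1)=\partial i_{v_2}(\xi_2)$.

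It remains to establish the claim. For $x$ in the base $X_e$ it is immediate from the construction of the tree of spaces: $f_{e,u}(x)$ and $f_{e,v}(x)$ are each joined to $x\in X_e$ by an edge of length $\frac{1}{2}$, so $d_X(f_{e,u}(x),f_{e,v}(x))\le 1$, and the maps $X_u,X_v\hookrightarrow X\hookrightarrow X^h$ are $1$-Lipschitz. For $x=(y,s)$ in a hyperbolic cone over some $H_{e,\alpha}\in\mathcal H_e$, strict type-preservation pushes $H_{e,\alpha}$ into horospheres $H_{u,\beta}\subset X_u$ and $H_{v,\beta'}\subset X_v$ whose cone points are joined through $c_e$ in the cone locus and hence lie in a single component; the associated tree of horosphere-like sets $C_\gamma\in\mathcal C$ therefore contains (copies of) both $f_{e,u}(y)$ and $f_{e,v}(y)$ at mutual distance $\le 1$, and by the description of the induced maps on hyperbolic cones underlying \lemref{lem5} the points $\Psi_u(x)$ and $\Psi_v(x)$ lie, up to a bounded additive error, over these two base points at the common height $s$ inside $C^h_\gamma\subset X^h$. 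Since horizontal distance at height $s$ equals $e^{-s}$ times the base distance, this gives a bound independent of $x$, completing the proof of the claim.

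I expect the cone case of the uniform-closeness claim to be the main obstacle: one has to check carefully that the composed induced maps on hyperbolic cones---first $f^h_{e,u}$, $f^h_{e,v}$ into the vertex spaces, then $i_u$, $i_v$ into $X^h$---send the fibre over $H_{e,\alpha}$ into one and the same cone $C^h_\gamma$ of $X^h$ (this is precisely where strict type-preservation and the tree structure of the cone locus are used), after which the exponential contraction of horizontal distances does the rest. The base case, and the formal facts that a composition of proper embeddings with Cannon--Thurston maps again has one and that fellow-travelling rays have a common endpoint, are routine.
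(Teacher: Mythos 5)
Your proposal is correct and follows essentially the same route as the paper: reduce to adjacent vertices, lift $\xi_1,\xi_2$ to a common point $\xi_e\in\partial X_e^h$, push an approximating sequence (or ray) into both vertex spaces, observe that the two image sequences stay uniformly close in $X^h$, and conclude by continuity of the CT maps that the limit points coincide. The cone case you flag as the main obstacle can in fact be bypassed: since every point of $\partial X_e^h$ is a limit of points of the base $X_e$ itself, one may choose the approximating sequence inside $X_e$, where the closeness bound is just the length-$1$ path through the two connecting half-edges of the tree of spaces --- which is exactly what the paper's proof does.
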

	
	\begin{proof}
		It is enough to prove the case when $v_1$ and $v_2$ are adjacent vertices. Let $e$ be the edge in $\mathcal{T}$ joining $v_1$ to $v_2$.
		
		By the definition of flow, $\xi_2 = \partial{\phi}_{v_1,v_2}^h(\xi_1)$. There exists $\xi_{e} \in \partial{X_e}^h$ such that $\partial{f_{e,v_i}^h}(\xi_e) = \xi_i$, for $i =1,2$. Let $\{x_n\}$ be a sequence in $X_{e}^h$ with $x_n \to \xi_e$ as $n \to \infty$. Then, for $i= 1,2$, $\{{f_{e,v_i}^h}(x_n)\}$ is a sequence in $X_{v_i}^h$ with ${f_{e,v_i}^h}(x_n) \to \xi_i$ as $n \to \infty$ and $d_{X^h}(\hat{i}_{e}(x_n),\hat{i}_{v_i}({f_{e,v_i}^h}(x_n))) = \frac{1}{2}$. This implies that  $d_{X^h}(\hat{i}_{v_1}({f_{e,v_1}^h}(x_n)),\hat{i}_{v_2}({f_{e,v_2}^h}(x_n))) = 1$. So, under CT map, both $\xi_1$ and $\xi_2$ map to the same element of $\partial {X^h}$.
	\end{proof}
	
	The converse of this lemma is false. However, we have the following:
	
	\begin{prop}\label{prop1}
		{\it Let $v_1\neq v_2 \in \mathcal{T}$. Suppose $\xi_{i} \in \partial{X}_{v_i}^h$, $i=1,2$, map to the same point, say $\xi$, under the CT maps $\partial{X}_{v_i}^h \to \partial{X}^h$ such that $\xi$ is a limit point of both $X_{v_1}$ and $X_{v_2}$. Then there exists $w \in [v_1,v_2] \subset \mathcal{T}$ such that $\xi_1$ and $\xi_2$ can be flowed to $\partial{X}_w^h$.}
	\end{prop}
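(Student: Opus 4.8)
The plan is to reduce the proposition to a statement about how far $\xi_1$ can be flowed along the tree geodesic $[v_1,v_2]$, and then to push that flow all the way across. Write the consecutive vertices of $[v_1,v_2]\subset\mathcal{T}$ as $v_1=u_0,u_1,\dots,u_n=v_2$ and set $e_j=[u_{j-1},u_j]$. Let $a$ be the largest index for which $\xi_1$ can be flowed to $\partial X_{u_a}^h$; since a prefix of a flow is again a flow, $\xi_1$ then flows to every $\partial X_{u_j}^h$ with $0\le j\le a$. If $a=n$ we are done with $w=v_2$ (the point $\xi_2$ trivially flows to $\partial X_{v_2}^h$). So suppose $a<n$, put $e=e_{a+1}=[u_a,u_{a+1}]$, and let $\zeta\in\partial X_{u_a}^h$ be the flow of $\xi_1$ to $\partial X_{u_a}^h$. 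By \lemref{lem17}, $\partial i_{u_a}(\zeta)=\xi$, so $\xi\in\Lambda(X_{u_a})$; by hypothesis $\xi\in\Lambda(X_{v_2})$, and since $a<n$ the edge $e$ separates $u_a$ from $v_2$ in $\mathcal{T}$. On the other hand, maximality of $a$ means the flow of $\xi_1$ cannot be continued across $e$, that is, $\zeta\notin\partial f_{e,u_a}^h(\partial X_e^h)=\Lambda_{X_{u_a}^h}\!\big(f_{e,u_a}^h(X_e^h)\big)$. The argument consists in contradicting this; it forces $a=n$, which is the proposition.

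So everything comes down to the following claim, whose proof is the heart of the matter. \emph{Claim. Let $e$ be an edge of $\mathcal{T}$ with endpoint $u$ which separates $u$ from a vertex $u''$, and let $\zeta\in\partial X_u^h$ be such that $\partial i_u(\zeta)$ is a conical limit point of $X_{u''}$. Then $\zeta\in\Lambda_{X_u^h}\!\big(f_{e,u}^h(X_e^h)\big)=\partial f_{e,u}^h(\partial X_e^h)$.} If $\zeta$ is a parabolic point of $X_u^h$, then the strictly type-preserving hypothesis makes $\partial i_u(\zeta)$ the cone point of a tree of horospheres $C_\alpha\in\mathcal{C}$; since $\partial i_u(\zeta)\in\Lambda(X_{u''})$, the connected subtree $p(C_\alpha)\subset\mathcal{T}$ meets both sides of $e$, hence contains $e$, so $C_\alpha$ contains a horosphere of $X_e$ which $f_{e,u}$ carries cofinally into the horosphere of $X_u$ determined by $\zeta$, and $\zeta\in\partial f_{e,u}^h(\partial X_e^h)$ follows. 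If $\zeta$ is not parabolic, choose an electric geodesic ray $\hat\beta\subset\widehat X_u$ asymptotic to $\zeta$, with electro-ambient ray $\beta\subset X_u^h$, and form the ladder $B_{\hat\beta}$ of Section~\ref{sec5}, uniformly quasiconvex in $\mathcal{TC}(X)$ by \thmref{thm4}. Since $f_{e,u}^h(X_e^h)$ is $C_2$-quasiconvex in $X_u^h$, it is enough to show that $\beta$ is eventually confined to a bounded neighbourhood of $f_{e,u}^h(X_e^h)$. One does this by comparing, with the help of \lemref{lem3} and \lemref{lem4}, a geodesic ray of $X^h$ asymptotic to $\xi=\partial i_u(\zeta)$ against both $\beta$ and a geodesic ray of $X_{u''}^h$ asymptotic to a $\partial i_{u''}$-preimage of $\xi$: the edge space $X_e$ separates the $u$-side from the $u''$-side of $e$ in $(X,d)$, in $\mathcal{TC}(X)$ and in $X^h$, and because $\xi$ is a conical limit point of $X_{u''}$ the $X_{u''}$-points hugging the chosen ray persist all the way out to $\xi$; these two facts force the rays to pass cofinally within a bounded distance of $f_{e,u}^h(X_e^h)$, which, carried back into $X_u^h$ through the inclusion $i_u$, confines $\beta$ near $f_{e,u}^h(X_e^h)$.

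The step I expect to be the main obstacle is exactly this last transfer. The separating role of the tree of spaces is transparent in $X$ and in $\mathcal{TC}(X)$, but $i_u\colon X_u^h\to X^h$ is only a proper embedding, not a quasi-isometric embedding, so it might a priori collapse widely separated directions of $X_u^h$ into a single neighbourhood of $\xi$; ruling this out is where the conical, rather than merely topological, nature of $\xi$ as a limit point must be exploited, presumably alongside a uniqueness statement for Cannon--Thurston preimages of conical limit points. The remainder of the proof is bookkeeping on $\mathcal{T}$, the uniform quasiconvexity of ladders, and the hyperbolic-geometry lemmas recalled in Sections~\ref{sec2}--\ref{sec5}.
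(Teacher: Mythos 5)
There is a genuine gap, and it is structural. Your reduction aims to prove a strictly stronger statement than \propref{prop1}: that $\xi_1$ can be flowed \emph{all the way} to $\partial X_{v_2}^h$. To push the flow across the critical edge you formulate a Claim whose hypothesis is that $\partial i_u(\zeta)$ is a \emph{conical} limit point of $X_{u''}$ (and your sketch uses conicality essentially, "the $X_{u''}$-points hugging the chosen ray persist all the way out to $\xi$", plus an appeal to injectivity of CT preimages, which is \lemref{lem13} and again needs conicality). But \propref{prop1} assumes only that $\xi$ is a limit point of $X_{v_1}$ and $X_{v_2}$; conicality is not available. The "flow all the way" conclusion under conical hypotheses is exactly \corollaryaref{corollary}, which in the paper is deduced from \propref{prop1} \emph{together with} \lemref{lem12} and \lemref{lem13}; it is not the content of \propref{prop1} itself. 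Because you never track the flow of $\xi_2$ (beyond the trivial case), your argument cannot recover the configuration the paper calls Case 1, where both flows stop partway but their ranges overlap and one simply takes $w$ to be the last vertex reached by $\xi_1$. The paper's actual work is confined to Case 2 (disjoint flow ranges): there one builds ladders $B_{\hat\alpha_1}$, $B_{\hat\alpha_2}$ from \emph{both} ends, uses \thmref{thm4} and \lemref{lem3} to pull two geodesic rays in $X^h$ asymptotic to $\xi$ back near the respective ladders inside $(X,d)$, and uses the separation of $X$ by the edge space $X_{e_1}$ to show the two rays would have infinite Hausdorff distance unless $\xi_1$ flows across $e_1$ — a contradiction. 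Your maximal-index scheme, by contrast, must cross the edge with only one ray and one ladder, and without conicality that crossing can genuinely fail (the CT map $\partial i_{u_a}$ need not be injective, so $\xi\in\Lambda(X_{v_2})$ does not force \emph{your} preimage $\zeta$ into $\Lambda_{X_{u_a}^h}\bigl(f_{e,u_a}^h(X_e^h)\bigr)$).

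Even granting the conical hypothesis, the proposal does not prove its own key Claim: the decisive "transfer" step (confining $\beta$ near $f_{e,u}^h(X_e^h)$ from information in $X^h$, given that $i_u$ is only a proper embedding) is explicitly left open by you, and the parabolic branch also skips the argument for why $\xi\in\Lambda(X_{v_2})$ forces the horosphere tree $C_\alpha$ to project across the edge $e$ (sequences in $X_{v_2}$ can converge to a parabolic point of $X^h$ without entering or staying near the corresponding horoball). So as written the proposal both targets the wrong (stronger, conical) statement and leaves its central step unestablished; the paper's two-sided flow argument with \lemref{lem17}, the two ladders, and the edge-space separation is what actually closes the proof under the stated hypotheses.
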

	
	\begin{proof} 
		
		We assume the contrary. Suppose there exists no $w \in [v_1,v_2] \subset \mathcal{T}$ such that $\xi_1$ and $\xi_2$ can be flowed to $\partial{X}_w^h$. Then there exists $v'_{1}, v'_{2} \in [v_1,v_2]$ such that $\xi_{1}$ can be flowed only till $\partial{X}_{v'_{1}}^h$ in the direction of $X_{v_2}^h$ and $\xi_{2}$ can be flowed only till $\partial{X}_{v'_{2}}^h$ in the direction of $X_{v_1}^h$. Then, there are two possibilities.
		
		Case 1: Suppose $v'_1 \in [v'_2,v_2]$.
		
		In this case, we  are done by taking $w$ to be $v'_1$.
		
		Case 2: Suppose $v'_1 \notin [v'_2,v_2]$.
		
		We will show that this is not possible. We prove by contradiction. Using \lemref{lem17}, without loss of generality, assume $v_1 = v'_1$ and $v_2 = v'_2$. For $i=1,2$, let $\hat{\alpha}_i \subset \widehat{X}_{v_i}$ be an electric geodesic ray with corresponding electro-ambient quasigeodesic ray $\alpha_i$ such that $\alpha_i(\infty)= \xi_i$.
		Let $B_i$ denote the ladder $B_{\hat{\alpha}_i}$. Let $u_i \in V(\mathcal{T})$ be the vertex adjacent to $v_i$ in $[v_1,v_2]$, and let the edge connecting $v_i$ and $u_i$ be $e_i$. Since $\xi_i$ cannot be flowed into $\partial{X_{u_i}}$, for $i=1,2$, ${N}_C^{h}(\alpha_i)\cap f_{e_i}(X_{e_i})$ has finite diameter in $X_{v_i}^h$.
		
		Let $\{x_n\}$ be a sequence of elements in $\alpha_{1}^b$ such that $\lim{x_n} = \xi$ in $\partial{X^h}$ and $\gamma$ be a geodesic ray in $X^h$ with $\gamma(0) = x_1$ and $\gamma(\infty)= \xi$. For each $n>0$, let $y_n \in \gamma$ be a nearest point projection of $x_n$ in $\gamma$. By \lemref{lem8}, the path $\gamma|_{[x_1,y_n]}\ast [y_n,x_n]$, denoted by $\gamma_n$, is a quasi-geodesic .  Similarly we choose $\{x'_n\}$ in $\alpha_{2}^b$ with $\lim{x'_n} = \xi$ in $\partial{X^h}$. Let $\gamma'$ be a geodesic ray in $X^h$ with $\gamma'(0) = x'_1$ and $\gamma'(\infty)= \xi$. As above, for a nearest point projection $y'_n \in \gamma'$  of $x'_n$ in $\gamma'$, we get a sequence of quasi-geodesics $\gamma'_n = \gamma'|_{[x'_1,y'_n]}\ast [y'_n,x'_n]$. We will show that if Case 2 holds, then $\textup{Hd}_{X^h}(\gamma,\gamma') = \infty$, which is a contradiction.
		
		\textbf{Claim:} $\textup{Hd}(\gamma\cap X,\gamma'\cap X)= \infty.$
		
		{\em Proof of the claim:} Suppose not. Suppose there exists some $M>0$ such that $\textup{Hd}(\gamma\cap X,\gamma'\cap X)= M$. Let $\{z_k\} \subset \gamma \cap X$ and $\{z'_k\} \subset \gamma' \cap X$ such that $z_k \to \xi$, $z'_k \to \xi$ in $X^h$ and $d(z_k,z'_k) \leq M$. For each $k>0$, there exists $n_k$ such that $z_k \in \gamma_{n_k}$ and $z'_k \in \gamma'_{n_k}$. Let $\beta_{n_k}$ and $\beta'_{n_k}$ denote geodesics joining $x_1$ to $x_{n_k}$ and $x'_1$ to $x'_{n_k}$ in $B_1$ and $B_2$ respectively. By \thmref{thm4}, these are quasigeodesics in $\mathcal{TC}(X)$.
		By \lemref{lem3}, there exists $K >0$ such that $\gamma_{n_k}\bigcap X$ and $\gamma'_{n_k}\bigcap X$  lie in $K$-neighbourhood of $\beta_{n_k}$ and $\beta'_{n_k}$ respectively. So there exists $w_k \in \beta_{n_k}^b$ and $w'_k \in \beta'^{b}_{n_k}$ such that $d(z_k,w_k) \leq K$ and $d(z'_k,w'_k) \leq K$. Then, $d(w_k,w'_k)\leq M+2K = B$, say.
		
		Let $Y_1$ and $Y_2$ be the connected components obtained by removing $X_{e_1}$ from $X$, with $Y_1$ containing $X_{v_1}$ and $Y_2$ containing $X_{v_2}$. Since ${N}_C^{h}(\alpha_1)\cap f_{e_1}(X_{e_1})$ has finite diameter, only finitely many $\beta_{n_{k}}$ pass through it. So for infinitely many $k$, $w_k \in Y_1$. Since, for all such $k$, $w'_k \in Y_2$ and $d(w_k,w'_k)\leq B$, there is a sequence $\{t_k\}$ in $f_{e_1}(X_{e_1})$, and hence in $f^{h}_{e_1}(X^{h}_{e_1})$, satisfying $d(w_{k},t_{k}) \leq B$. Thus, there exists a flow of $\xi_1$ into $X_{u_1}^h$, which contradicts our assumption. This proves the claim, which further implies that $\textup{Hd}_{X^h}(\gamma,\gamma') = \infty$. 
	\end{proof}
	
	Now we show that the flow of a conical limit point is a conical limit point.
	
	\begin{lemma}\label{lem12} Let $v \in V(\mathcal{T})$ and let $\xi_v \in \partial{X}_v^h$ such that its image under the CT map, say $\xi$, is a conical limit point of $X_{v}$. Suppose $\xi_v$ can be flowed into $\partial{X}_u^h$ and let $\xi_u$ be the flow. Then $\xi_u$ also maps to a conical limit point of $X_{u}^h$ under the CT map.
	\end{lemma}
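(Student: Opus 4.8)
By \lemref{lem17}, $\xi_u$ has Cannon--Thurston image $\xi$ as well, so we must show $\xi$ is a conical limit point of $X_u$ in $X^h$. The plan: reduce to adjacent vertices, transport a conical witnessing sequence from $X_v$ across the intervening edge space into $X_u$, and --- the crux --- use the Mj--Pal ladder to see that the witnessing sequence provided by the hypothesis already fellow-travels, inside $X_v^h$, the ray that flows onto $X_u^h$.

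\emph{Reductions.} Since a geodesic ray to a cone point eventually runs deep into its hyperbolic cone, whose $X^h$-neighbourhoods miss $X\supset X_v$, conicality forces $\xi$ to be non-parabolic in $X^h$; likewise $\xi_v$ is non-parabolic in $X_v^h$. Using \lemref{lem17} along $[v,u]\subset\mathcal{T}$, it suffices to treat $u,v$ adjacent via an edge $e$. Fix a geodesic ray $\gamma$ in $X^h$ to $\xi$; as asymptotic rays are a bounded Hausdorff distance apart, it is enough to build, for this $\gamma$, a constant $R'$ and a sequence $\{z_n\}\subset X_u\cap N^h_{R'}(\gamma)$ with $z_n\to\xi$. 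And for that it is enough to show $\xi$ is conical for the edge space $f_{e,v}(X_e)\subset X^h$: given $b_m=f_{e,v}(x_m)\in f_{e,v}(X_e)\cap N^h_{R''}(\gamma)$ with $b_m\to\xi$, the length-$\tfrac12$ edges joining $x_m\in X_e$ to $f_{e,v}(x_m)$ and to $f_{e,u}(x_m)$ give $d_{X^h}(b_m,f_{e,u}(x_m))\le 1$, so $z_m:=f_{e,u}(x_m)\in X_u\cap N^h_{R''+1}(\gamma)$ and $z_m\to\xi$.

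\emph{The flow side.} Since $\xi_v$ flows into $X_u^h$, $\xi_v=\partial f_{e,v}^h(\xi_e)$ for some $\xi_e\in\partial X_e^h$, so $\xi_v$ is a limit point of the quasiconvex subset $f_{e,v}^h(X_e^h)$ of $X_v^h$ (\lemref{lem5}). A limit point of a quasiconvex subset of a proper hyperbolic space is a conical limit point of it, and since $\xi_v$ is non-parabolic this persists for $f_{e,v}(X_e)$ (push a witnessing sequence off the hyperbolic cones, using that a geodesic ray in $X_v^h$ to $\xi_v$ meets each cone in a bounded set, cf.\ \lemref{lem4}). Fix such a geodesic ray $\alpha_v$ in $X_v^h$ to $\xi_v$ with $\alpha_v(0)\in f_{e,v}(X_e)$, and henceforth take $\gamma(0)=\alpha_v(0)$; then $\alpha_v$ lies in a bounded neighbourhood of $f_{e,v}^h(X_e^h)$.

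\emph{The crux.} By hypothesis there are $R$ and $\{y_n\}\subset X_v\cap N^h_R(\gamma)$ with $y_n\to\xi$; the claim is that $d_{X_v^h}(y_n,\alpha_v)$ is bounded independently of $n$. Let $\hat\alpha_v\subset\widehat{X}_v$ be the electric geodesic ray whose electro-ambient quasigeodesic is $\alpha_v$, and form the ladder $B_{\hat\alpha_v}\subset\mathcal{TC}(X)$: it is uniformly quasiconvex (\thmref{thm4}) with coarsely Lipschitz retraction $\widehat{\Pi}_{\hat\alpha_v}$, it contains $\hat{i}_v(\hat\alpha_v)$, and --- $\xi$ being non-parabolic and the Cannon--Thurston maps compatible with electrocution --- the image of $\gamma$ in $\mathcal{TC}(X)$ is a uniform quasigeodesic ray from $\alpha_v(0)\in B_{\hat\alpha_v}$ to a limit point of $B_{\hat\alpha_v}$, hence stays in a bounded neighbourhood of $B_{\hat\alpha_v}$. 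By \lemref{lem9} applied to that quasigeodesic, together with the coarse Lipschitz bound and the fact that $\hat{i}_v(y_n)$ is a bounded $\mathcal{TC}(X)$-distance from the quasigeodesic, $\hat{i}_v(y_n)$ is a bounded $\mathcal{TC}(X)$-distance from $\widehat{\Pi}_{\hat\alpha_v}(\hat{i}_v(y_n))$; as $y_n\in X_v$, this last point is the image of the nearest-point projection of $y_n$ onto $\alpha_v$, so the uniform proper embedding $\hat{i}_v\colon\widehat{X}_v\to\mathcal{TC}(X)$ gives $d_{\widehat{X}_v}(y_n,\alpha_v)=O(1)$, and --- $\xi_v$ non-parabolic, $y_n$ outside the horosphere-like sets --- the argument of \lemref{lem4} (with \lemref{lem2}, \lemref{lem3} and the bounded region penetration property) upgrades this to $d_{X_v^h}(y_n,\alpha_v)=O(1)$. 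Staying boundedly close to the ray $\alpha_v$ while escaping to infinity, $y_n\to\xi_v$ in $X_v^h$. I expect this to be the main obstacle: extracting from the bare hypothesis ``$\xi$ conical for $X_v$'' the fact that the witnessing sequence hugs the particular ray $\alpha_v$ over $\xi_v$, and carrying out the passages $\mathcal{TC}(X)\leftrightarrow\widehat{X}_v\leftrightarrow X_v^h$ with uniform constants --- precisely where non-parabolicity and the bounded-region-penetration toolbox are indispensable.

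\emph{Conclusion.} By the crux and the flow side we may choose $q_n\in f_{e,v}^h(X_e^h)$ with $d_{X_v^h}(y_n,q_n)=O(1)$; then $q_n\to\xi_v$, so $i_v(q_n)\to\xi$ in $X^h$, while $d_{X^h}(i_v(y_n),i_v(q_n))\le d_{X_v^h}(y_n,q_n)=O(1)$ puts $i_v(q_n)$ in a bounded $X^h$-neighbourhood of $\gamma$. Since $\xi_v$ is non-parabolic we may take $q_n\in f_{e,v}(X_e)$. Hence $\xi$ is conical for $f_{e,v}(X_e)$ in $X^h$, and the Reductions paragraph finishes the proof.
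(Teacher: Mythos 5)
Your overall strategy (reduce to adjacent vertices, route the witnessing sequence through the edge space, use the ladder and its quasiconvexity) is close in spirit to the paper's, but the step you yourself flag as the crux contains a genuine gap. You assert that the image of the $X^h$-geodesic ray $\gamma$ in $\mathcal{TC}(X)$ is a uniform quasigeodesic ray ending at a limit point of $B_{\hat\alpha_v}$, hence lies in a bounded neighbourhood of the ladder. There is no map realizing this: the electric space naturally associated to $X^h$ is $\widehat{X}=\mathcal{E}(X,\mathcal{C})$, where each \emph{tree} of horosphere-like sets is coned to a single point, whereas $\mathcal{TC}(X)$ cones off each horosphere-like set inside each vertex space separately; the cone locus trees can have infinite diameter in $\mathcal{TC}(X)$, so the natural comparison map is coarsely Lipschitz only in the direction $\mathcal{TC}(X)\to\widehat{X}$, and quasigeodesics of $X^h$ (or of $\widehat{X}$) need not give uniform quasigeodesics of $\mathcal{TC}(X)$. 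You would also need to identify $\xi\in\partial X^h$ with a point of $\partial\mathcal{TC}(X)$ lying in the limit set of the ladder, and no such boundary identification is available (the two boundaries are genuinely different). This is exactly the difficulty that \lemref{lem3} is designed to bypass: it compares a quasigeodesic of $\mathcal{TC}(X)$ and a quasigeodesic of $X^h$ \emph{with the same finite endpoints}, and controls only the portion of the latter lying in $X$, in the metric of $X$. The paper's proof therefore never projects the infinite ray $\gamma$: it builds the ladder from the $u$-side, shows it flows into $X_v$ producing $\alpha_v$ with $\alpha_v(\infty)=\xi_v$, takes points $x_1,x_{n_k}\in\alpha_v^b$, compares the finite quasigeodesics $\gamma_{n_k}$ in $X^h$ with ladder geodesics $\beta_{n_k}$ (quasigeodesics in $\mathcal{TC}(X)$ by \thmref{thm4}) via \lemref{lem3}, and then uses vertical quasigeodesic rays in the ladder to descend the bounded tree-distance from $p(z_k)$ to $u$, landing directly in $\alpha_u^b\subset X_u$. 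You would need to restructure your crux along these finite-segment lines for the argument to go through.

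A secondary gap: even granting a bound $d_{\mathcal{TC}(X)}(\hat i_v(y_n),\widehat{\Pi}_{\hat\alpha_v}(\hat i_v(y_n)))=O(1)$, your promotion of $d_{\widehat X_v}(y_n,\alpha_v)=O(1)$ to $d_{X_v^h}(y_n,\alpha_v)=O(1)$ is only asserted. Bounded electric distance does not bound the distance in $X_v^h$ (two points of one horosphere-like set are at electric distance $1$ but can be arbitrarily far in the cusped metric), and the "argument of \lemref{lem4}" does not apply: \lemref{lem2} and \lemref{lem4} bound $X$-distances in terms of $X^h$-distances for points of $X$, which is the opposite implication. The paper avoids this issue entirely by keeping all comparisons ($\{w_k\}$ to $\gamma\cap X$ via \lemref{lem4}, then to $\beta_{n_k}^b$ via \lemref{lem3}, then to $\alpha_u^b$ via vertical quasigeodesics) in the metric of $X$, which dominates $d_{X^h}$, so the final sequence $\{t_k\}\subset X_u$ is automatically in a bounded $X^h$-neighbourhood of $\gamma$.
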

	
	\begin{proof}
		
		It is enough to check the case when $v$ and $u$ are adjacent. Rest follows by induction. So without loss of generality, assume that $d_{\mathcal{T}}(v,u) = 1$. Let $e$ be the edge in $\mathcal{{T}}$ joining $u$ to $v$. 	
		Let $\hat{\alpha}_{u}$ be an electric geodesic ray in $\widehat{X}_{u}$ with an electro-ambient quasigeodesic ray $\alpha_{u}$ satisfying $\alpha_{u}(\infty) = \xi_u$. Let $B_{\hat{\alpha}_u}$ be a ladder. Since $\xi_u$ is a flow of $\xi_v$, we have $\xi_u \in \partial{f}^h_{e,u}(\partial{X}^h_{e})$. So $f_{e,u}^h(X_{e}^h)$ is an unbounded subset of $X^h_{u}$. Let $p \in f_{e,u}^h(X_{e}^h)$ be a nearest point projection of $\alpha_{u}(0)$ on $f_{e,u}^h(X_{e}^h)$ and let ${\mu}$ be a geodesic ray in $X_{u}^h$ starting at $p$ with ${\mu}(\infty) = \xi_u$. Then, $\alpha_u$ and $\mu$ are finite Hausdroff distance apart in $X^{h}_{u}$. By quasiconvexity of $f_{e,u}^h(X_{e}^h)$, ${\mu} \subset N_{C_2}^h(f_{e,u}^h(X_{e}^h))$.  Let $x \in \alpha_{u}$ such that for a nearest point projection $y \in \mu$ of $x$ on $\mu$, $y$ satisfies $d_{X_{u}^h}(p,y)>D$, for $D>0$ from \lemref{lem7}. 
		Then by \lemref{lem7}, $[\alpha_{u}(0),p] \cup \mu|_{[p,y]} \cup [y,x] \subset N_{C_1}^h(\alpha_{u})$. Doing this for all such $x$ we have, $\mu \subset N_{C_1}^h(\alpha_{u})$. Therefore, for $C= C_1+C_2$, $N_{C}^h(\alpha_u) \cap f_{e,u}(X_{e})$ has infinite diameter in $X_{u}^h$. Hence, by the construction of $B_{\hat{\alpha}_u}$, the ladder extends to $\widehat{X}_v$ and $\hat{\alpha}_{v}= B_{\hat{\alpha}_u} \cap p^{-1}(v)$ is a geodesic ray in $\widehat{X}_u$ and for its electro-ambient quasigeodesic ray, $\alpha_{v}(\infty) = \xi_v$. 
		
		Let $\{x_n\}$ be a sequence of elements in $\alpha_{v}^b$ such that $\lim{x_n} = \xi$ in $\partial{X^h}$ and let $\gamma$ be a geodesic ray with $\gamma(0) = x_1$ and $\gamma(\infty)= \xi$. For each $n>0$, let $y_n \in \gamma$ be a nearest point projection of $x_n$ in $\gamma$. By \lemref{lem8}, $\gamma_n = \gamma|_{[x_1,y_n]}\ast [y_n,x_n]$ is a quasigeodesic ray in $X^h$. Since $\xi$ is a conical limit point of $X_{v}$, by the definition of conical limit points, there exists a real number $R\geq 0$ and an infinite sequence of elements $\{w_k\}$ in $X_v$ such that $\lim{w_k} = \xi$ and $w_k \in {N}_R^h(\gamma)$. Using \lemref{lem4}, there is $R_1 = R_1(R)$ such that, for each $k$, there exists $w'_k \in \gamma\cap X$ satisfying $d_{X^h}(w_k,w'_k) \leq d(w_k,w'_k) \leq R_1$. Let $n_k>0$ such that $w'_k \in \gamma_{n_k}$. For each $k>0$, let $\beta_{n_k}$ be a geodesic in $B_{\hat{\alpha}_u}$ joining $x_1$ to $x_{n_k}$. This  is quasigeodesic in $\mathcal{TC}(X)$.
		By \lemref{lem3}, for each $k$, there exists $z_k \in \beta_{n_k}^b$ such that $d(z_k,w'_k) \leq K$, where $K$ is the constant from \lemref{lem3}. This implies that $d(z_k,w_k) \leq K+R_1$. Since $w_k \in X_v$, we have $d_{\mathcal{T}}(v,p(z_k)) \leq K+R_1$ and $d_{\mathcal{T}}(u,p(z_k)) \leq K+R_1+1$. Then using the vertical quasigeodesic ray starting at $z_k$, we get a sequence $\{t_k\} \subset \alpha_{u}^b \subset X_u$ satisfying $d(t_k,z_k) \leq C'(K+R_1+1)$. Then 
		
		$d_{X^h}(t_k,w'_k) \leq d_{X^h}(t_k,z_k) + d_{X^h}(z_k,w'_k)  \leq  d(t_k,z_k) + d(z_k,w'_k) \leq C'(K+R_1+1)+K = L$, say.
		
		Thus, we have an infinite sequence $\{t_k\}$ in $X_u$ such that $\lim{t_k} = \xi$ in $X^h$ and $t_k \in N_{L}^h(\gamma)$. Hence, $\xi$ is a conical limit point for $X_{u}$. 
	\end{proof}
	
	This is the last lemma required to prove \thmref{thm1}.
	
	\begin{lemma}\label{lem13}
		Let $v \in V(\mathcal{T})$ and $\partial{i}_{v} : \partial{X}_{v}^h \to \partial{X}^h$ be the CT map. If $\xi \in \partial{i}_{v}(\partial{X}_{v}^h)$ is a conical limit point of $X_v$, then $|\partial{i}_{v}^{-1}(\xi)| = 1$.
	\end{lemma}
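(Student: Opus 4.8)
The plan is to argue by contradiction: suppose $\xi_1 \neq \xi_2$ are two distinct points of $\partial X_v^h$ with $\partial i_v(\xi_1) = \partial i_v(\xi_2) = \xi$, where $\xi$ is a conical limit point of $X_v$. The strategy is to run the ladder construction from Proposition \ref{prop1} (Case 2) but now with \emph{both rays living in the same vertex space} $X_v^h$, and derive the same contradiction $\textup{Hd}_{X^h}(\gamma, \gamma') = \infty$ while simultaneously showing that conicality forces $\textup{Hd}_{X^h}(\gamma, \gamma') < \infty$.

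First I would set up the two electric geodesic rays $\hat\alpha_1, \hat\alpha_2 \subset \widehat{X}_v$ with electro-ambient quasigeodesics $\alpha_i$ satisfying $\alpha_i(\infty) = \xi_i$. Since $\xi_1 \neq \xi_2$ in $\partial X_v^h$, the rays $\alpha_1, \alpha_2$ are infinite Hausdorff distance apart \emph{inside} $X_v^h$. Pick sequences $\{x_n\} \subset \alpha_1^b$ and $\{x'_n\} \subset \alpha_2^b$ converging to $\xi$ in $\partial X^h$, geodesic rays $\gamma, \gamma'$ in $X^h$ from these basepoints to $\xi$, and the quasigeodesics $\gamma_n, \gamma'_n$ via nearest-point projections exactly as in Proposition \ref{prop1}. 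The key geometric input, which replaces the "finite-diameter intersection with the edge space" hypothesis of Case 2, is that $\alpha_1$ and $\alpha_2$ themselves are $\mathcal{TC}(X)$-quasigeodesics (they are ladders over the single vertex $v$, so Theorem \ref{thm4} applies), and since they are unbounded-Hausdorff-distance apart in $\widehat{X}_v$ — here I would invoke the uniform proper embedding $\hat i_v : \widehat X_v \to \mathcal{TC}(X)$ recorded before Lemma \ref{lem3} to upgrade "far apart in $\widehat X_v$" to "far apart in $\mathcal{TC}(X)$" — the $\mathcal{TC}(X)$-quasigeodesics $\beta_{n_k}, \beta'_{n_k}$ joining $x_1$ to $x_{n_k}$ and $x'_1$ to $x'_{n_k}$ also diverge. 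Running the claim of Proposition \ref{prop1} verbatim: if $\textup{Hd}(\gamma \cap X, \gamma' \cap X) = M < \infty$, one produces $w_k \in \beta_{n_k}^b$, $w'_k \in \beta'^b_{n_k}$ with $d(w_k, w'_k) \leq M + 2K$; but $w_k$ lies near $\alpha_1$ and $w'_k$ near $\alpha_2$ (both in $X_v$, via the ladder-to-base estimates), forcing $\alpha_1$ and $\alpha_2$ to come within bounded $X_v$-distance — hence bounded $\widehat X_v$-distance — infinitely often, contradicting $\xi_1 \neq \xi_2$. So $\textup{Hd}(\gamma\cap X, \gamma'\cap X) = \infty$, and as in Proposition \ref{prop1} this gives $\textup{Hd}_{X^h}(\gamma, \gamma') = \infty$.

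For the other half I would use conicality together with Lemma \ref{lem12}: since $\xi$ is a conical limit point of $X_v$ and (by the contradiction hypothesis being false for the conclusion) $\xi_2$ lies over $\xi$ as well, the argument of Lemma \ref{lem12} shows there is $R$ and sequences $\{w_k\}, \{w'_k\}$ in $X_v \cap N_R^h(\gamma)$ and $X_v \cap N_R^h(\gamma')$ respectively converging to $\xi$; applying Lemma \ref{lem4} to push these onto $\gamma \cap X$ and $\gamma' \cap X$, and then Lemma \ref{lem8} / stability of the quasigeodesic rays $\gamma_n, \gamma'_n$, I would conclude both $\gamma$ and $\gamma'$ fellow-travel a \emph{fixed} geodesic ray in $X_v^h$ asymptotic to a common point — more precisely, both $\gamma$ and $\gamma'$ stay within bounded $X^h$-distance of the image $i_v(\alpha)$ of a single electro-ambient ray representing $\xi$ inside $X_v^h$ — so $\textup{Hd}_{X^h}(\gamma, \gamma') < \infty$. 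This contradicts the previous paragraph, forcing $\xi_1 = \xi_2$, i.e. $|\partial i_v^{-1}(\xi)| = 1$.

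The main obstacle I anticipate is the bookkeeping in the divergence half: one must be careful that the two rays $\alpha_1, \alpha_2$ being far apart in $\widehat X_v$ really does propagate to their ladders $\beta_{n_k}, \beta'_{n_k}$ being far apart in $\mathcal{TC}(X)$, since the ladders can wander into many vertex spaces — this is exactly where the uniform proper embedding $\hat i_v$ and the uniform quasiconvexity of ladders (Theorem \ref{thm4}) must be combined, and where a naive argument could fail if the ladders over $\hat\alpha_1$ and $\hat\alpha_2$ happened to share large pieces in neighbouring vertex spaces. A secondary subtlety is ensuring the "bounded $X_v$-distance" conclusion about $w_k, w'_k$ genuinely contradicts $\xi_1 \neq \xi_2$ rather than merely $\alpha_1 = \alpha_2$ as sets; this is handled because $w_k \to \xi$ forces the coincidence to happen arbitrarily far out along both rays, which does pin down equal endpoints in $\partial X_v^h$.
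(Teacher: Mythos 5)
There is a genuine gap, and it sits exactly where you flagged your "main obstacle": the divergence half of your contradiction scheme. You want to deduce, from $\xi_1\neq\xi_2$ in $\partial X_v^h$ alone, that $\textup{Hd}_{X^h}(\gamma,\gamma')=\infty$. That implication cannot be true: it would say the CT map $\partial i_v$ is injective on all of $\partial X_v^h$ with no conicality hypothesis, whereas any pair of (necessarily non-conical) boundary points identified by the CT map gives rays $\gamma,\gamma'$ asymptotic to the same $\xi\in\partial X^h$, which are automatically at finite Hausdorff distance in the hyperbolic space $X^h$. Concretely, your argument breaks at the step "$w_k$ lies near $\alpha_1$ and $w'_k$ near $\alpha_2$ (both in $X_v$, via the ladder-to-base estimates)": points of $\beta_{n_k}^b\subset B_1$ can sit over vertices of $\mathcal{T}$ arbitrarily far from $v$, and the only device available for descending from the ladder to the base ray $\alpha_1^b$, the vertical quasigeodesic ray, moves a point a distance comparable to $d_{\mathcal{T}}(v,p(w_k))$, which you have not bounded. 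In \propref{prop1}, Case 2, this is not needed because the contradiction comes from the edge $e_1$ separating the tree and the finite-diameter intersection with the edge space, not from proximity to $\alpha_1$; that structural input has no analogue here, since both ladders are based at the same vertex and may share large pieces. Your "finiteness half" is also misattributed: $\textup{Hd}_{X^h}(\gamma,\gamma')<\infty$ holds for free because both rays end at $\xi$, so conicality does no work there, and the appeal to \lemref{lem12} is not needed.

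The paper's proof is direct and shows where conicality must enter. Since $\gamma(\infty)=\gamma'(\infty)=\xi$, one has $\textup{Hd}_{X^h}(\gamma,\gamma')\leq K'$; conicality of $\xi$ for $X_v$ gives a sequence $\{w_k\}\subset X_v$ with $w_k\to\xi$ and $w_k\in N_R^h(\gamma)$, hence also within $R+K'$ of $\gamma'$. \lemref{lem4} pushes $w_k$ to points $z_k\in\gamma\cap X$ and $z'_k\in\gamma'\cap X$ at bounded $X$-distance, \lemref{lem3} transfers these to points $t_k\in\beta_{n_k}^b$ and $t'_k\in\lambda_{n_k}^b$ on the two ladders, and — this is the decisive point your proposal lacks — because $w_k\in X_v$, the tree-projections $p(t_k),p(t'_k)$ stay at bounded $\mathcal{T}$-distance from $v$, so the vertical quasigeodesic rays descend $t_k,t'_k$ to points $s_k\in\alpha_1^b$, $s'_k\in\alpha_2^b$ that are uniformly close in $X$; properness of $X_v\hookrightarrow X$ then bounds $d_{X_v}(s_k,s'_k)$, forcing $\xi_1=\xi_2$. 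If you want to keep your contradiction framing, you must inject the conical sequence $\{w_k\}\subset X_v$ into the divergence half to control $d_{\mathcal{T}}(v,p(\cdot))$ before invoking any "ladder-to-base" descent; without that, the step is unjustified and, as stated, false.
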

	
	\begin{proof} 
		We prove by contradiction. Let $\xi_1, \xi_2 \in \partial{X_{v}^h}$ such that $\partial{i}_{v}(\xi_1) = \partial{i}_{v}(\xi_2) = \xi \in \partial{X^h}$. For $i =1,2$, let $\hat{\alpha}_i$ be a geodesic in $\widehat{X}_{v}$ with its electro-ambient quasigeodesic $\alpha_i$ satisfying $\alpha_{i}(\infty) = \xi_i$. We follow the steps of the proof of \lemref{lem12} with respect to $\hat{\alpha}_1$ and $\hat{\alpha}_2$ to get a pair of sequences of elements that are bounded distance apart but converge to two different boundary points in $X_{v}^h$.
		
		Let $\{x_n\}$ and $\{x'_n\}$ be sequences of elements in $\alpha_{1}^b$ and $\alpha_{2}^b$ such that $\lim{x_n} = \lim{x'_n} = \xi$ in $\partial{X^h}$. Let $\gamma$ and $\gamma'$ be geodesic rays with $\gamma(0) = x_1$, $\gamma'(0) = x'_1$ and $\gamma(\infty)= \gamma'(\infty) = \xi$. So there exists $K'>0$ such that $\textup{Hd}_{X^h}(\gamma,\gamma') \leq K'$. For each $n>0$, let $y_n \in \gamma$ and $y'_n \in \gamma'$ be nearest point projection of $x_n$ on $\gamma$ and $x'_n$ on $\gamma'$ respectively. By \lemref{lem8}, $\gamma_n = \gamma|_{[x_1,y_n]}\ast [y_n,x_n]$ and $\gamma'_n = \gamma'|_{[x'_1,y'_n]}\ast [y'_n,x'_n]$ are quasi-geodesics in $X^h$. Since $\xi$ is a conical limit point of $X_{v}$, by the definition of conical limit points, there exists a real number $R\geq 0$ and an infinite sequence of elements $\{w_k\}$ in $X_v$ such that $\lim{w_k} = \xi$ and $w_k \in {N}_R^h(\gamma)$. Using \lemref{lem4}, there is $R_1 = R_{1}(R)$ and $R_2 = R_{2}(R+K')$ such that, for each $k$, there exists $z_k \in \gamma\cap X$ and $z'_k \in \gamma' \cap X$ satisfying $d(w_k,z_k) \leq R_1$ and $d(w_k,z'_k) \leq R_2$. For each $k>0$, there exists $n_k>0$ such that $z_k \in \gamma_{n_k}$ and $z'_k \in \gamma'_{n_k}$. For $i=1,2$, let $B_i = B_{\hat{\alpha}_i}$. For each $k>0$, let $\beta_{n_k}$ denote a geodesic in $B_1$ joining $x_1$ to $x_{n_k}$  and $\lambda_{n_k}$ denote a geodesic in $B_2$ joining $x'_1$ to $x'_{n_k}$. These are quasigeodesics in $\mathcal{TC}(X)$.
		
		By \lemref{lem3}, there exists a constant $K >0$ such that $\gamma_{n_k}\bigcap X$ lies in $K$-neighbourhood of $\beta_{n_k}$ and $\gamma'_{n_k}\bigcap X$ lies in $K$-neighbourhood of $\lambda_{n_k}$ in $X$. So there exists $t_k \in \beta_{n_k}^b$ and $t'_k \in \lambda_{n_k}^b$ such that $d(z_k,t_k) \leq K$ and $d(z'_k,t'_k) \leq K$. Thus, $d(w_k,t_k) \leq R_1+K$ and $d(w_k,t'_k) \leq R_2+K$. 
		Since $w_k \in X_v$, for each $k$, $d_{\mathcal{T}}(v,p(t_k))) \leq R_1+K$ and $d_{\mathcal{T}}(v,p(t'_k))) \leq R_2+K$. Using vertical quasigeodesic rays, we get sequences $\{s_k\}$ and $\{s'_k\}$ in $\alpha^b_1$ and $\alpha^b_2$ respectively, such that $d(s_k,w_k) \leq C'(R_1+K)$ and $d(s'_k,w_k) \leq C'(R_2+K)$. Then $d(s_k,s'_k) \leq C'(R_1+R_2)+2C'K$. Since $X_v \to X$ is a proper embedding, $d_{X_v}(s_k,s'_k)$ is uniformly bounded in $X_v$ and $\lim{s_k} = \lim{s'_k}$. Hence, $\xi_1 = \xi_2$.
	\end{proof}
	
	\begin{corollarya}\label{corollary}
		Let $v_1\neq v_2 \in \mathcal{T}$. Suppose $\xi_{i} \in \partial{X}_{v_i}^h$, $i=1,2$, map to the same point, say $\xi$, under the CT maps $\partial{X}^h_{v_i} \to \partial{X}^h$, such that it is a conical limit point for both $X_{v_1}$ and $X_{v_2}$, then $\xi_1$ can be flowed into $\partial{X_{v_2}}$ and $\xi_2$ can be flowed into $\partial{X_{v_1}}$.
	\end{corollarya}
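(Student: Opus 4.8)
The plan is to assemble \propref{prop1}, \lemref{lem12}, \lemref{lem13}, and \lemref{lem17}, together with the observation that flow across a single edge is reversible.

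First I would apply \propref{prop1}. Since a conical limit point is in particular a limit point, $\xi$ is a limit point of both $X_{v_1}$ and $X_{v_2}$, so \propref{prop1} supplies a vertex $w\in[v_1,v_2]\subset\mathcal{T}$ together with points $\zeta_1,\zeta_2\in\partial X_w^h$ such that $\xi_1$ can be flowed to $\zeta_1$ along $[v_1,w]$ and $\xi_2$ can be flowed to $\zeta_2$ along $[v_2,w]$. The next step is to show $\zeta_1=\zeta_2$. By \lemref{lem17} a flow maps to the same point as its source under the respective CT maps, so $\partial{i}_{w}(\zeta_1)=\partial{i}_{v_1}(\xi_1)=\xi=\partial{i}_{v_2}(\xi_2)=\partial{i}_{w}(\zeta_2)$. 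Since $\xi$ is a conical limit point of $X_{v_1}$ and $\zeta_1$ is the flow of $\xi_1$, applying \lemref{lem12} inductively along the vertices of $[v_1,w]$ shows that $\xi$ is a conical limit point of $X_w$. Now $\zeta_1,\zeta_2\in\partial{i}_{w}^{-1}(\xi)$ with $\xi\in\partial{i}_{w}(\partial X_w^h)$ a conical limit point of $X_w$, so \lemref{lem13} gives $|\partial{i}_{w}^{-1}(\xi)|=1$, forcing $\zeta_1=\zeta_2=:\zeta$.

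Finally I would conclude using reversibility of the edge flows. For an edge $e$ joining vertices $u,u'$, the map $\partial{\phi}_{u,u'}^h$ is a bijection of $\partial f_{e,u}^h(\partial X_e^h)$ onto $\partial f_{e,u'}^h(\partial X_e^h)$ with inverse $\partial{\phi}_{u',u}^h$, since both are induced by the same pair $f_{e,u},f_{e,u'}$ via $\phi_{u,u'}(f_{e,u}(x))=f_{e,u'}(x)$ and $\phi_{u',u}(f_{e,u'}(x))=f_{e,u}(x)$. Reversing the flow of $\xi_2$ from $v_2$ to $w$ therefore yields a flow of $\zeta$ from $w$ to $v_2$ whose endpoint is $\xi_2$. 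Concatenating the flow $\xi_1\rightsquigarrow\zeta$ along $[v_1,w]$ with the flow $\zeta\rightsquigarrow\xi_2$ along $[w,v_2]$ — and using that $[v_1,v_2]=[v_1,w]\cup[w,v_2]$, while $\zeta$ lies in the boundaries of both edge spaces at $w$ incident to these two sub-geodesics because it is the common flow from each side — exhibits $\xi_1$ being flowed into $\partial X_{v_2}^h$, with flow $\xi_2$. By the symmetric argument $\xi_2$ can be flowed into $\partial X_{v_1}^h$, completing the proof.

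I expect the only genuinely delicate point to be the bookkeeping in this last step: checking that the two one-sided flows meeting at $w$ glue to an honest flow along all of $[v_1,v_2]$, i.e. that $\zeta$ simultaneously lies in the correct edge-space boundaries at $w$. Everything else is a direct invocation of the preceding results.
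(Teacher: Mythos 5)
Your argument is correct and is exactly the route the paper intends: the corollary is stated there without proof as an immediate consequence of Proposition~\ref{prop1}, Lemma~\ref{lem17}, Lemma~\ref{lem12} and Lemma~\ref{lem13}, which is precisely what you assemble. Your extra care about reversing the edge-flows and gluing the two one-sided flows at $w$ (noting $\zeta$ lies in the relevant edge-space boundary because it arises as a flow from the $v_2$ side) fills in the bookkeeping the paper leaves implicit, and it is sound.
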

	
	\subsection{Proof of \thmref{thm1}}\label{finalsec}
	
	\begin{proof}
		
		For $i=1,2$, let $w_i = g_{i}G_{v_i} \in V(\mathcal{T})$, $g_i \in G$ and $v_i \in V(Y)$.
		
		Then $G_{w_i} = \mathrm{Stab}_{G}(w_{i}) = g_{i}G_{v_i}g_{i}^{-1}$.
		
		$\Lambda_{c}(g_{i}G_{v_i}^{h}g_{i}^{-1}) = \Lambda_{c}(g_{i}G_{v_i}^{h}) = g_{i}\Lambda_{c}(G_{v_i}^{h})$.
		
		So, it is enough to show that $\Lambda_{c}(G_{v_1}^{h}) \cap \Lambda_{c}(gG_{v_2}^{h}) = \Lambda_{c}(G_{v_1}^{h} \cap gG_{v_2}^{h}g^{-1})$.
		
		It is clear that $\Lambda_{c}(G_{v_1}^{h} \cap gG_{v_2}^{h}g^{-1}) \subset \Lambda_{c}(G_{v_1}^{h}) \cap \Lambda_{c}(gG_{v_2}^{h})$ and we only need to prove \begin{equation*}
		\Lambda_{c}(G_{v_1}^{h}) \cap \Lambda_{c}(gG_{v_2}^{h}) \subset \Lambda_{c}(G_{v_1}^{h} \cap gG_{v_2}^{h}g^{-1}).
		\end{equation*}
		
		Let $\xi \in \Lambda_{c}(G_{v_1}^{h}) \cap \Lambda_{c}(gG_{v_2}^{h})$. Then there exists $\xi_{1} \in \Lambda_{c}(G_{v_1}^{h})$ and $\xi_{2} \in \Lambda_{c}(gG_{v_2}^{h})$ such that under the CT maps, $\xi_{1}, \xi_{2} \mapsto \xi$ in $\partial{G}^h$. 
		
		$\Theta_{1,v_{1}}^h : G_{v_1}^h \to X_{w_1}^h$ and $\Theta_{g,v_{2}}^h : gG_{v_2}^h \to X_{w_2}^h$ are quasi-isometries, so there exists $\xi_1' \in \partial{X_{w_1}^h}$ and $\xi_2' \in \partial{X_{w_2}^h}$ such that $\partial{\Theta}_{1,v_{1}}^h(\xi_{1}) = \xi_{1}'$ and $\partial{\Theta}_{g,v_{2}}^h(\xi_{2}) = \xi_{2}'$.
		For $i=1,2$, let $\lambda_i$ be a geodesic ray in $X_{w_i}^h$ with $\lambda_{i}(\infty) = \xi_{i}'$. By \corollaryaref{corollary}, there is a flow of $\xi_{1}'$ into $\partial{X}_{w_2}^h$ and $\xi_{2}'$ is the flow. It also follows from the proof of \lemref{lem12} that the ladder $B_{\lambda_1}$ extends to $\widehat{X}_{w_2}$ and without loss of generality, take $\lambda_2 = B_{\lambda_1} \cap \widehat{X}_{w_2}$.
		Let $\{p_k\}$ be a sequence of points on $\lambda_1$ lying outside horoball-like sets such that $\lim{p_k} = \xi_{1}'$. Let $d_{\mathcal{T}}(w_1,w_2) = N$. Then using vertical quasigeodesic rays, there exists $C' \geq 0$ and a sequence $\{q_k\}$ in $\lambda_2$, lying outside horoball-like sets, such that $\lim q_k = \xi_2'$ and $d(p_{k},q_{k}) \leq C'N$. 	For each $k>0$, let  $(\Theta_{1,v_1}^h)^{-1}(p_k) = a_k \in G_{v_1}^h$ and $(\Theta_{g,v_2}^h)^{-1}(q_k) = b_k \in gG_{v_2}^h$. Then $d(a_k,b_k) \leq d(a_k,p_k) + d(p_k,q_k) + d(q_k,b_k) \leq D_{0} + C'N + D_{0} = D'$.
		So we have sequence of points $\{a_k\}$ in $G_{v_1}$ and $\{b_k\}$ in $gG_{v_2}$ such that $d(a_k,b_k) \leq D'$ for all $k>0$, and lim $a_k = \xi_1$ and lim $b_k = \xi_2$. Let $\{\omega_k\}$ be a sequence of geodesics in the Cayley graph $\Gamma(G,S)$ joining $a_k$ to $b_k$ and let $W_k$ be a word labelling $\omega_k$. Since there are only finitely many such words, there exists a constant subsequence $\{W_{k_l}\}$ of $\{W_k\}$. Let $h_l = a^{-1}_{k_1}a_{k+l}$ and $h'_l = b^{-1}_{k_1}b_{k_l}$. Let $h \in G$ be the element represented by $W_{k_l}$. 
		Then $a_{n_1}{h_l}h = a_{n_1}h{h'_l}$, i.e., $h_l = h{h'_l}h^{-1}$. Since $h'_l$ connects two elements of $gG_{v_2}$, $h'_l \in G_{v_2}$. This implies that $h_l \in G_{v_1} \cap hG_{v_2}h^{-1}$\\
		Then $ a_{k_1}{h_l}a^{-1}_{k_1} \in a_{k_1}G_{v_1}a^{-1}_{k_1} \cap a_{k_1}hG_{v_2}h^{-1}a^{-1}_{k_1} = G_{v_1} \cap gG_{v_2}g^{-1}$.
		
		Since $d(a_{k_1}{h_l}a^{-1}_{k_1},a_{k_1}{h_l}) = d(a_{k_1}{h_l}a^{-1}_{k_1},a_{k_l}) = d(1,a_{k_1})$ for all $l \in \mathbb{N}$, $\lim_{l \to \infty}{a_{k_1}{h_l}a^{-1}_{k_1}} = \lim_{l \to \infty} {a_{k_l}} = \xi_1$. This completes the proof.
	\end{proof} 
	
	While we are far from understanding a limit intersection theorem for general limit points of vertex and edge groups of a graph of relatively hyperbolic groups satisfying the conditions of \thmref{thm1}, the following proposition sheds some light into the bounded parabolic limit points. For a finitely generated relatively hyperbolic group $G$, under the action of $G$ on $\partial{G}^h$, $g \in G$ is a {\em parabolic element} if it has infinite order and fixes exactly one point in $\partial{G}^h$. A subgroup containing only parabolic elements is a {\em parabolic subgroup} and it has a unique fixed point in the boundary. This point is called a {\em parabolic limit point}. And a parabolic limit point $p$ is {\em bounded parabolic} if its stabilizer $G_p$ in $G$ acts cocompactly on $\partial{G}^h\setminus \{p\}$.
	
	\begin{prop}\cite[Proposition 3.3]{Yang}
		Let $H$, $J$ be infinite subgroups of a relatively hyperbolic group $G$. If $\xi \in \Lambda(H) \cap \Lambda(J)$ is a bounded parabolic point of $H$ and $J$, then $\xi$ is either a bounded parabolic point of $H \cap J$, or an isolated point in $\Lambda(H) \cap \Lambda(J)$ and does not lie in $\Lambda(H \cap J)$.
	\end{prop}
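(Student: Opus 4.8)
The plan is to funnel the whole statement through the geometry at the single peripheral coset fixing $\xi$, and then to read off the dichotomy from one elementary fact about coset intersections inside that coset.

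\textbf{Set-up.} Since $\xi$ is a parabolic point of $H\le G$ it is fixed by an infinite order parabolic, hence is a parabolic limit point of $G$; let $P:=\mathrm{Stab}_G(\xi)$ be the associated maximal parabolic subgroup, so that $\xi$ is the single boundary point of the hyperbolic cone on the coset $P$ in $G^h$. In a relatively hyperbolic group every parabolic limit point is bounded parabolic, so $P$ acts properly discontinuously and cocompactly on $\Omega:=\partial G^h\setminus\{\xi\}$. In particular $\Omega$ is locally compact, $\eta_n\to\xi$ in $\partial G^h$ iff $\eta_n$ eventually leaves every compact subset of $\Omega$, and for compact $A,B\subseteq\Omega$ the set $\{p\in P: pA\cap B\neq\emptyset\}$ is finite. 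Write $H_\xi:=\mathrm{Stab}_H(\xi)=H\cap P$, $J_\xi:=J\cap P$, so $(H\cap J)_\xi=H_\xi\cap J_\xi=:L$. The hypothesis that $\xi$ is a bounded parabolic point of $H$ (resp. $J$) says exactly that $H_\xi$ (resp. $J_\xi$) acts cocompactly on $\Lambda(H)\setminus\{\xi\}$ (resp. $\Lambda(J)\setminus\{\xi\}$); as these actions are also proper, there are \emph{compact} sets $F_H,F_J\subseteq\Omega$ with $\Lambda(H)\setminus\{\xi\}=H_\xi\cdot F_H$ and $\Lambda(J)\setminus\{\xi\}=J_\xi\cdot F_J$. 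I would also prove once the following lemma: for any subgroup $K\le G$, $\xi\in\Lambda(K)$ if and only if $K\cap P$ is infinite. The reverse implication is clear since an infinite parabolic subgroup has limit set $\{\xi\}$. For the forward implication, take $k_n\in K$ with $k_n o\to\xi$ for a basepoint $o$; as $\xi$ is not a conical limit point of $G$ it is not conical for $K$, so by the standard description of geodesics in $G^h$ approaching a parabolic point (equivalently, Tukia's analysis of parabolic points of convergence groups) one may, after a subsequence, write $k_n=p_n c$ with $p_n\in P$ and $c$ fixed; then $k_nk_m^{-1}=p_np_m^{-1}\in K\cap P$, and these are infinitely many distinct elements. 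In particular $\xi\in\Lambda(H\cap J)$ iff $L$ is infinite.

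\textbf{The key computation.} Let $x\in(\Lambda(H)\cap\Lambda(J))\setminus\{\xi\}$. Pick $h\in H_\xi$, $f\in F_H$ with $x=hf$, and $j\in J_\xi$, $f'\in F_J$ with $x=jf'$. Then $(j^{-1}h)f=f'$, so $j^{-1}h$ lies in the finite set $S:=\{p\in P: pF_H\cap F_J\neq\emptyset\}$, and hence $h\in\bigcup_{s\in S}(H_\xi\cap J_\xi s)$. Now if $h_1,h_2\in H_\xi\cap J_\xi s$, then $h_1h_2^{-1}\in H_\xi$, while also $h_1h_2^{-1}=(j_1 s)(j_2 s)^{-1}=j_1j_2^{-1}\in J_\xi$; thus $h_1h_2^{-1}\in L$, so $H_\xi\cap J_\xi s$ is contained in a single right coset $Lh_s$ of $L$ in $P$. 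Therefore the ``$P$-coordinate'' $h$ of an arbitrary $x\in(\Lambda(H)\cap\Lambda(J))\setminus\{\xi\}$ always lies in the fixed finite union $\bigcup_{s\in S}Lh_s$ of right $L$-cosets, and so $x=hf\in\bigcup_{s\in S}Lh_sF_H$.

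\textbf{The dichotomy.} If $L$ is infinite, then $\xi\in\Lambda(H\cap J)$ by the lemma and is a parabolic point of $H\cap J$ with stabilizer $L$; moreover $\Lambda(H\cap J)\setminus\{\xi\}\subseteq(\Lambda(H)\cap\Lambda(J))\setminus\{\xi\}\subseteq L\cdot C$ where $C:=\bigcup_{s\in S}h_sF_H$ is compact, so $L$ acts cocompactly on $\Lambda(H\cap J)\setminus\{\xi\}$, i.e. $\xi$ is a bounded parabolic point of $H\cap J$. If $L$ is finite, then by the lemma $\xi\notin\Lambda(H\cap J)$, and the set $\bigcup_{s\in S}Lh_s$ is now a \emph{finite} subset of $P$; if $\eta_n\in(\Lambda(H)\cap\Lambda(J))\setminus\{\xi\}$ with $\eta_n\to\xi$, then writing $\eta_n=h_nf_n$ as above we would have all $h_n$ in this finite set and all $f_n\in F_H$, so the $\eta_n$ would stay in the compact set $\bigl(\bigcup_{s\in S}Lh_s\bigr)\cdot F_H\subseteq\Omega$, contradicting $\eta_n\to\xi$; hence $\xi$ is isolated in $\Lambda(H)\cap\Lambda(J)$. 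This is precisely the asserted dichotomy.

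\textbf{Where the real work sits.} Once the problem is reduced to $\Omega$, the argument is short; the genuinely non-formal input is the preparatory lemma, and within it the step $k_n=p_nc$, i.e. that a non-conical approach to a parabolic point is, up to a subsequence and bounded error, a translate by the peripheral stabilizer. This is exactly the place where relative hyperbolicity (rather than abstract convergence-group formalism) is used, and I would establish it by tracking geodesics entering and leaving the hyperbolic cone on $P$. A minor technical point to be careful about is that the ``$P$-coordinate'' $h$ of a point of $\Lambda(H)\setminus\{\xi\}$ is only defined up to a fixed finite ambiguity coming from properness of the $P$-action; but the computation only ever uses the existence of \emph{some} decomposition $x=hf$, so this causes no difficulty.
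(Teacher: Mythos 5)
The paper itself gives no argument for this proposition -- it is quoted verbatim from Yang, with a citation to \cite[Proposition 3.3]{Yang} -- so your proposal can only be judged on its own merits. Most of it stands up well: the reduction to the action of the maximal parabolic $P=\mathrm{Stab}_G(\xi)$ on $\Omega=\partial G^h\setminus\{\xi\}$, the finiteness of $S=\{p\in P: pF_H\cap F_J\neq\emptyset\}$, the ``key computation'' placing $(\Lambda(H)\cap\Lambda(J))\setminus\{\xi\}$ inside finitely many $L$-translates of a compact subset of $\Omega$, and the two conclusions you draw from that containment alone (cocompactness of $L$ on $\Lambda(H\cap J)\setminus\{\xi\}$ when $L$ is infinite, isolation of $\xi$ when $L$ is finite) are all correct.

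The genuine gap is the preparatory lemma. As stated -- for an arbitrary subgroup $K\le G$, $\xi\in\Lambda(K)$ iff $K\cap P$ is infinite -- it is false, and the step ``after a subsequence, $k_n=p_nc$ with $p_n\in P$ and $c$ fixed'' is exactly where it breaks: a sequence in $K$ can converge to a bounded parabolic point while staying arbitrarily far from $P$. Take $G=F_2=\langle a,b\rangle$ hyperbolic relative to $P=\langle a\rangle$, let $\xi$ be the parabolic point of $P$, and let $K=\langle a^nba^{-n}: n\ge 1\rangle$. Then $K\cap P=\{1\}$, yet $a^nba^{-n}\to\xi$ in $\overline{G^h}$ (the geodesic from $1$ to $a^nba^{-n}$ follows the vertical ray into the horoball over $\langle a\rangle$ for time comparable to $\log n$), and $d_G(a^nba^{-n},P)=n+1\to\infty$, so no decomposition $k_n=p_nc$ with $c$ in a finite set exists. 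Since the only place you use the hard direction of the lemma is the deduction ``$L$ finite $\Rightarrow \xi\notin\Lambda(H\cap J)$,'' precisely the substantive half of the ``isolated and not in $\Lambda(H\cap J)$'' alternative is left unproved. The hole is repairable without changing your framework: when $L$ is finite your computation already shows $\xi$ is isolated in $\Lambda(H\cap J)\cup\{\xi\}$, and one can then exclude $\xi\in\Lambda(H\cap J)$ by cases on $\Lambda(H\cap J)$ -- if it is $\{\xi\}$ then $H\cap J$ fixes $\xi$ and $L=H\cap J$ would be infinite; if it has exactly two points then $H\cap J$ contains a loxodromic fixing $\xi$, contradicting that a parabolic point of $G$ is not a conical limit point (nor a loxodromic fixed point); if it has at least three points it is perfect, contradicting isolation. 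But as written, the appeal to the false lemma is a real gap, not a routine omission.
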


\end{document}